\newcommand{\RR}{\mathbb{R}}
\newcommand{\TT}{\mathbb{T}}
\newcommand{\SSS}{\mathbb{S}}
\newcommand{\ggg}{\mathsf{g}}
\newcommand{\ca}[1]{\mathcal{#1}}
\newcommand{\mrm}[1]{\mathrm{#1}}
\newcommand{\dd}{\, {\rm d}}
\newcommand{\ve}{\varepsilon}
\newcommand{\de}{\delta}
\newcommand{\iy}{\infty}
\newcommand{\p}{\partial}
\newcommand{\na}{\nabla}
\newcommand{\norm}[1]{\left\|#1\right\|}
\newcommand{\set}[1]{\left \{#1\right \} }
\DeclareMathOperator*\esssup{ess\,sup}
\newtheorem{theorem}{Theorem}[section]
\newtheorem{lemma}[theorem]{Lemma}
\newtheorem{proposition}[theorem]{Proposition}
\theoremstyle{definition}
\newtheorem*{remark*}{Remark}
\newcounter{cases}
\newcounter{subcases}[cases]
\title[{Global Well-Posedness for 1D Boltzmann}]{{Global Well-Posedness and Large Data Estimates for the 1D Boltzmann Equation}}
\author{Dominic Wynter}
\begin{document}

	\begin{abstract}
		We prove quantitative growth estimates for large data solutions to the 1D Boltzmann equation, for a collision kernel with angular cutoff and relative velocity cutoff. We present proofs for the global well-posedness results presented in the note of Biryuk, Craig, and Panferov, in which global solutions for this equation are shown to exist for large data, with density bounded for all time. We show that these solutions propagate moments in $v$, and derivatives in $x$ and $v$. Our main contribution is to develop new, sharp integral inequality estimates, which allow us to prove exponential growth bounds in $L^\infty_x L^1_v$ for large data, and to prove dissipation in $ L^\infty_x L^1_v$ for finite energy data on the line. 
	\end{abstract}
	
	\maketitle

	\tableofcontents

	\section{Introduction}

	In this paper, we study the initial value problem for the Boltzmann equation in one spatial dimension,
	\begin{equation}\label{1D_Boltzmann}
		\begin{cases}
			\dfrac{\p f}{\p t} + v_1\dfrac{\p f}{\p x} = Q(f,f)	&
			\qquad \hbox{ for }
			(t,x,v)\in [0,\infty)\times\Omega_x^1\times\mathbb{R}_v^3	\\
			f(0,x,v) = f_\mrm{in}(x,v)\ge0 &
			\qquad \hbox{ for }
			(x,v) \in \Omega_x^1\times\mathbb{R}_v^3
		\end{cases}
	\end{equation}
	with spatial domain $\Omega_x^1=\RR_x^1$ or $\TT_x^1$, where $v = (v_1,v_2,v_3)\in\RR_v^3$ in Cartesian coordinates, and where $Q(f,f)$ is the Boltzmann collision operator, which we define in \eqref{nonsymmetric_Boltzmann_operator}. The existence theory of this equation is a well-studied problem, both in the spatially homogeneous setting and in three-dimensional inhomogeneous setting. In the homogeneous problem, an existence and uniqueness result was developed for data with polynomial decay by Carleman \cite{Carleman57}, and this result was generalized to polynomially weighted $L^1$-data by Arkeryd \cite{Arkeryd72}, and then by interpolation to $L^p$ data by Gustafsson \cite{Gustafsson88}. Constructive $W^{k,p}_\ell$ estimates were proved by Mouhot and Villani \cite{MouhotVillani04}, who also proved large time quantitative bounds and convergence to equilibrium. In the spatially inhomogeneous setting, large time perturbative results were obtained near Maxwellians by Ukai \cite{Ukai74}, and near vacuum by Illner and Shinbrot \cite{IllnerShinbrot84}. Existence of global in time large data solutions was proved by DiPerna and Lions, who developed a theory of renormalized solutions \cite{DiPernaLions89}, and a weak-strong uniqueness result in this setting was proved by Lions \cite{Lions94}. 
	
	For collision operators without angular cutoff, a theory of renormalized solutions with defect measure was obtained by Alexandre and Villani \cite{AlexandreVillani02}, and a well-posedness and stability theory was developed for the spatially homogeneous case by Desvillettes and Mouhot \cite{DesvillettesMouhot09}. The spatially inhomogeneous problem near global Maxwellians was addressed by Alexandre-Morimoto-Ukai-Xu-Yang \cite{AMUXY_1} \cite{AMUXY_2} \cite{AMUXY_3} and by Gressman and Strain \cite{GressmanStrain11}, who also proved rates of convergence towards equilibrium. More recently, a stability of vacuum result was obtained by Chaturvedi \cite{Chaturvedi21}. We will, however, not treat the non-cutoff case in this paper.
	
	The one dimensional problem \eqref{1D_Boltzmann} was first studied in stationary weak form in the papers \cite{ArkerydCercignaniIllner91} \cite{ArkerydNouri95} \cite{Cercignani98} \cite{ArkerydNouri00}. These results were extended to the non-stationary problem by Cercignani, who proved global weak solutions for $L^1(1+|v|^2)\cap L\log L$ initial data \cite{Cercignani05}.
	
	A global well-posedness theory for \eqref{1D_Boltzmann} was developed by Andrei Biryuk, Walter Craig, and Vladimir Panferov, who presented \cite{BCPslides} and published a summary of their result \cite{BiryukCraigPanferov}, though a full proof was never released. This theory is based off of a novel angular averaging estimate for the gain term of the collision kernel. For data close to global Maxwellians, the gain term can then be controlled by relative entropy, which then gives estimates that can be closed for all time. The theory for general large data relies on a Lyapunov functional particular to 1D kinetic models, developed by Tartar \cite{Tartar87} and Bony \cite{Bony87}. It was observed by Cercignani that this functional can be used to control the collision rate $\norm{Q^+(f,f)}_{L^1([0,T],L^1)}$ globally in time for suitably truncated collision kernels \cite{Cercignani92}, which was combined with an angular averaging estimate to get a large data theory for strong solutions in \cite{BiryukCraigPanferov}.
	
	In this paper, we present a proof of these global solution theories, and prove further quantitative estimates on the growth of solutions for large data, as well as a dissipation result for large data on $\RR_x^1\times\RR_v^3$. Our essential technical contribution is a sharp growth estimate for an integral inequality introduced in \cite{BiryukCraigPanferov}, which we use to prove our main result.
		
	We now set up our problem. The Boltzmann collision operator is defined by
	\begin{equation}
		\label{nonsymmetric_Boltzmann_operator}
		Q(g,f)(v) = \int_{\RR^3_{v_*}\!\times\mathbb{S}^2_\sigma}
		B\left( |v-v_*|,\frac{v-v_*}{|v-v_*|}\cdot\sigma\right)
		[g(v_*')f(v')-g(v_*)f(v)]\dd\sigma\dd v_*,
	\end{equation}
	for a collision kernel $B\ge0$, and where the velocities $v',v_*'$ are defined as
	\begin{equation}
		\label{post-collisional_velocities}
		v' = \frac{v+v_*}{2} + \sigma\frac{|v-v_*|}{2},
		\qquad
		v_*' = \frac{v+v_*}{2} -\sigma\frac{|v-v_*|}{2},
		\qquad \sigma\in\mathbb{S}^2.
	\end{equation}
	The velocities $v,v_*$ and $v',v_*'$ obey the momentum and energy conservation laws for elastic collisions
	\begin{align}
		\label{two_particle_conservation_laws}
		\begin{split}
			v + v_* &= v' + v_*' \\
			|v|^2 + |v_*|^2 &= |v'|^2 + |v_*'|^2.
		\end{split}
	\end{align}
	
	We refer to the textbook \cite{CercignaniIllnerPulvirenti94}, and the more recent review article \cite{Villani02}, for the standard theory and historical development of the Boltzmann equation. We consider collision kernels with a Grad angular cutoff, that is, $B\in L^1_{\mrm{loc}}$. More precisely, we have the following.
	
	\subsection{The Collision Kernel}
	We impose the following conditions on the collision kernel (adapted from \cite{BiryukCraigPanferov}).
	
	\begin{enumerate}
		\item[\textbf{(H1)}] We suppose that $B(r,\cos\theta)\le\phi(r)r$ for some non-increasing function $\phi\in L^1([0,\infty))$, and we require that
		\begin{align*}
			\norm{(1+r^{-1})B(r,\cos\theta)}_{L^\infty_{r,\theta}}<\infty.
		\end{align*}
		
		\item[\textbf{(H2)}] We have
		$B(r,\cos\theta)=0$ when $r\le R_0$, for some $R_0>0$. Additionally, we have 
		\begin{align*}
			\delta \sup_{\theta\in[0,\pi]} B(r,\cos\theta) 
			\le{2\pi}\int_0^\pi\! B(r,\cos\theta)\sin\theta\dd\theta
		\end{align*}
		for some $\delta>0$ independent of $r$.
		
	\end{enumerate}
	
	We note that hypothesis \textbf{(H1)} can be satisfied by $B(r,\cos\theta) = \Phi(r)b(\cos\theta)$ for bounded functions $\Phi,b$ such that
	\begin{align*}
		\Phi(r)\le r\phi(r) = C\frac{r}{1+r \log^{1+\ve}(1+r)},
	\end{align*}
	for some $C,\epsilon>0$ (which is the hypothesis in \cite{BiryukCraigPanferov}). If we additionally impose the low velocity cutoff condition $\Phi(r) = 0$ for $r\le R_0$, then the collision kernel also satisfies \textbf{(H2)}.
	
	We define the Sobolev space $W^{k,1}(\Omega^1\times\RR^3)$ for any integer $k\ge0$ using the norm
	\begin{equation*}
		\norm{f}_{W^{k,1}} =
		\sum_{\alpha + |\beta| \le k} 
		\int_{\Omega^1_x}\!\int_{\RR^3_v}\! 
		|\p_x^\alpha\p_v^\beta f(x,v)|
		\dd v \dd x
	\end{equation*}
	where $\alpha\in\mathbb{Z}_{\ge0},\beta\in\mathbb{Z}_{\ge0}^3$ are multi-indices. For any  $\ell\ge0$, we define the weight function
	\begin{align*}
		m_\ell(v) = (1+|v|^2)^{\ell/2}
	\end{align*}
	and define the weighted Sobolev space $W^{k,1}_\ell(\Omega^1\times\RR^3)$  for $k,\ell\ge0$ as
	\begin{equation*}
		\norm{f}_{W^{k,1}_\ell} = \norm{f m_\ell}_{W^{k,1}_\ell},
	\end{equation*}
	and we set $L^1_\ell(\Omega^1\times\RR^3) = W^{0,1}_\ell(\Omega^1\times\RR^3)$. To use standard notation, we will also write $L^1_2$ as $L^1(1+|v|^2)$.
	
	We say that $f\in L\log L$ when 
	\begin{align*}
		\int_{\Omega_x^1\times\RR_v^3} |f(x,v)|\log^+|f(x,v)|\dd v\dd x <\infty
	\end{align*}
	where we write $\log^+ = \max(0,\log^+)$ and $\log = \log^+-\log^-$. For any nonnegative function $f\in L\log L\cap L^1(1+|v|^2)$, we can define the entropy
	\begin{align*}
		H[f] = \int_{\Omega_x^1\times\RR_v^3} f(x,v)\log f(x,v)\dd v\dd x.
	\end{align*}
	For any nonnegative $f,g\in L^1(\Omega_x^1\times\RR_v^3)$ with the same mass, we can define the relative entropy
	\begin{align*}
		H(f|g) = \int_{\Omega_x^1\times\RR_v^3} f(x,v)\log \frac{f(x,v)}{g(x,v)}\dd v\dd x\ge0,
	\end{align*}
	where nonnegativity follows from convexity. It is known since Boltzmann \cite{Boltzmann72} that the quantity $H[f(t)]$ is non-increasing for solutions $f$ of the Boltzmann equation. Defining the entropy production functional
	\begin{align*}
		D_H(t) &=
		\frac14
		\int_{\Omega^1}\!\int_{\RR^3}\!\int_{\RR^3}\!\int_{\mathbb{S}^2}
		B\left(|v-v_*|,\sigma\cdot\frac{v-v_*}{|v-v_*|}\right)
		[f' f_*' - f f_*]\log\frac{f' f_*'}{f f_*}
		\dd\sigma\dd v_*\dd v\dd x.
		\\
		&\ge0,
	\end{align*}
	and writing $H(t)$ for $H[f(t)]$, we have the classical entropy inequality
	\begin{align}\label{entropy_inequality}
		H(t) + \int_0^t D_H(s)\dd s \le H[f_{\mrm{in}}] .
	\end{align}
	We also have the classical conservation laws
	\begin{align}\label{conservation_laws}
		\frac{\mrm{d}}{\mrm{d}t}
		\int_{\Omega_x^1}\!\int_{\RR_v^3}\!
			f(t,x,v)
			\begin{pmatrix}
				1 \\ v \\ |v|^2
			\end{pmatrix} \dd v\dd x = 0,
	\end{align}
	which follow from the cancellation identities
	\begin{align}\label{collision_kernel_cancellation}
		\int_{\RR_v^3}\! Q(f,f)(v)
		\begin{pmatrix}
			1 \\ v \\ |v|^2
		\end{pmatrix} \dd v = 0.
	\end{align}
	
	We define the norm
	\begin{align*}
		\norm{f}_X = \sup_{q\ge0}\norm{S_q f}_{L^\infty_x(\Omega_x^1, L^1(\RR_v^3))},
	\end{align*}
	where $S_q f(x,v) = f(x-qv_1,v)$ is the transport semigroup. Using the classical notation $f^\#(t,x,v) =  f(t,x+tv_1,v)$, we rewrite the Boltzmann equation in mild form, that is
	\begin{align}\label{mild_Boltzmann}
		\p_t f^\# = Q(f,f)^\#,
	\end{align}
	which is well defined whenever $Q(f,f)^\#(t,x,v)\in L^1([0,T])$ for almost every $x,v$. Using that $f^\#(t,x,v) = (S_{-t}f)(t,x,v)$, we integrate \eqref{mild_Boltzmann} in time and expand to get
	\begin{align*}
		S_{-t}f(t) = f_{\mrm{in}} + \int_0^t S_{-s}Q(f,f)(s)\dd s
	\end{align*}
	and by applying $S_t$ to both sides, we see that the Duhamel formulation
	\begin{align}\label{Duhamel_Boltzmann}
		f(t) = S_tf_{\mrm{in}}+ \int_0^t S_{t-s} Q(f,f)(s)\dd s
	\end{align}
	is equivalent to  \eqref{mild_Boltzmann} for any mild solution, and we will study the Boltzmann equation in this form.
	
	We now present global well-posedness theories for this problem. In Theorem \ref{Thm1}, we present a global well-posedness theory for initial data $f_{\mrm{in}}$ with small relatively entropy $H(f_{\mrm{in}}|M)$ for some Maxwellian $M$, and we prove quantiative growth rates for the solution. This theorem is adapted directly from \cite{BiryukCraigPanferov}, and we make no claim of originality for this result. We hope, however, that a full proof of this theorem will be useful for the literature. We compare this result to the nonlinear stability result for the hard-sphere Boltzmann equation proved in \cite{GualdaniMischlerMouhot17} on the space $\TT_x^3\times\RR_v^3$. While we are able to improve on the $\norm{f_{\mrm{in}}-M}_{L^1_v L^\infty_x(1+|v|^{2+\delta})}$ smallness condition in our one-dimensional context, we lose the nonlinear stability result $\norm{f(t)-M}_{L^1_v L^\infty_x(1+|v|^{2+\delta})} = O(e^{-\lambda t})$ of \cite{GualdaniMischlerMouhot17}, and we can only prove subexponential growth of norms. This loss of nonlinear stability comes from our inability to fully exploit the loss term $Q^{-}(f,f)$, which we use only to prove the conservation laws \eqref{collision_kernel_cancellation} and entropy dissipation \eqref{entropy_inequality}. While one might hope to exploit the loss term $Q^-(f,f)$ to prove a nonlinear trapping result, it is not clear if this will hold in the $X$ norm, and so we treat the loss term only as a perturbation. 
	
	In Theorem \ref{Thm2}, we present a global well-posedness theory for large data $f_{\mrm{in}}\in X\cap L^1(1+|v|^2)$, improving on \cite{BiryukCraigPanferov} to prove quantitative growth rates in the $X$ norm. This theorem relies essentially on a Lyapunov functional introduced by Tartar \cite{Tartar87} and Bony \cite{Bony87} for one-dimensional discrete velocity collisional models, which was then generalized to continuous velocity by Cercignani \cite{Cercignani92}. Using this functional, which we call the Bony functional, we are then able to close an integral inequality on the $X$ norm by following estimates in \cite{BiryukCraigPanferov}. We then show sharp growth estimates for this integral inequality, which allow us to prove exponential growth of the norm $\norm{f(t)}_X$ on the torus $\TT^1_x$, as well as dissipation $\norm{f(t)}_X\to 0$ on the whole space $\RR^1_x$. We note that the difference in velocity weights in the $X$ norm compared to the Bony functional mean that the collision kernel must vanish for small relative velocities for the result to hold. We now state the main theorems.
	
	\begin{theorem}[Global well-posedness near Maxwellians -- adapted from \cite{BiryukCraigPanferov}]\label{Thm1}
		Let $B$ be a collision kernel satisfying \textbf{(H1)}, and take initial data $f_{\mrm{in}}\in X\cap L^2_1\cap L\log L(\TT^1_x\times\RR_v^3)$. Let $m = \int f_{\mrm{in}}\dd v\dd x$ be its mass. Suppose there exists a mass $m$ global Maxwellian $M(v)$ such that
		\begin{align*}
			H(f_{\mrm{in}}|M) \le \frac{1}{4C + 2C^2 m},
		\end{align*}
		where $C = 2\pi\norm{(1+r^{-1})B}_{L^\infty_{r,\theta}}$. Then the initial value problem \eqref{1D_Boltzmann} admits a global solution $f\in L^\infty_{\mrm{loc}}([0,\infty),X)$, which is unique in $L^\infty([0,T],X)$ for any $T<\infty$, and we have the bound
		\begin{align*}
			\norm{f(t)}_{L^\infty_x L^1_v} \le\norm{f(t)}_X\le e^{C\sqrt{1+t}},\qquad t\in [0,\infty)
		\end{align*}
		where $C = C(B,\norm{f_{\mrm{in}}}_X, m)$.
		
		Furthermore, if $f_{\mrm{in}}\in W^{k,1}_\ell$ for some $k,\ell\ge 0$, then $f\in C([0,\infty),W^{k,1}_\ell)$.
	\end{theorem}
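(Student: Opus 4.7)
The proof follows the scheme outlined in \cite{BiryukCraigPanferov}: local well-posedness in $X$ by Duhamel iteration, combined with a sharp a priori bound on $\|f(t)\|_X$ obtained from the entropy dissipation and a one-dimensional angular averaging estimate on the gain term.

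The starting point is to solve \eqref{Duhamel_Boltzmann} locally by a contraction mapping argument in $C([0,T], X)$. Since the free transport $S_t$ acts isometrically on $X$, the task reduces to controlling $\int_0^t S_{-s} Q(f,f)(s)\dd s$ in the $X$ norm. The loss term admits a direct estimate $\|Q^-(f,f)\|_X \le C\|f\|_X^2$ via the collision frequency bound from hypothesis \textbf{(H1)}. The gain term requires the key angular averaging estimate, which takes roughly the form
\begin{align*}
    \Big\| \int_0^t S_{-s} Q^+(f,f)(s)\dd s \Big\|_X
    \le C\sqrt{1+t}\, \sup_{0 \le s \le t} \|f(s)\|_X^2,
\end{align*}
derived by first changing the angular variable $\sigma \mapsto v_1'$ in the $\mathbb{S}^2$ integral (picking up a Jacobian of order $|v-v_*|^{-1}$) and then changing spatial variables $x \mapsto x - s v_1'$ in place of $x - s v_1$. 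Integrating in $x$ yields the decay factor $(1+s)^{-1/2}$, which is the point at which the one-dimensional transport structure enters. Together these estimates yield a contraction on short intervals, so the lifespan of the local solution depends only on $\|f_{\mrm{in}}\|_X$.

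Setting $y(t) = \|f(t)\|_X$, the crux of the proof is to derive and close a global integral inequality of roughly the form
\begin{align*}
    y(t) \le y(0) + C \int_0^t \frac{H(f(s)|M)^{1/2}\, y(s)^2 + y(s)}{\sqrt{1+t-s}}\dd s,
\end{align*}
where the relative entropy prefactor comes from controlling the deviation of $f$ from $M$ via a Csisz\'ar-Kullback-Pinsker inequality $\|f-M\|_{L^1}^2 \le 2m H(f|M)$, and is bounded uniformly in $s$ by $H(f_{\mrm{in}}|M)^{1/2}$ using the monotonicity in \eqref{entropy_inequality}. The smallness condition $H(f_{\mrm{in}}|M) \le (4C+2C^2 m)^{-1}$ is exactly what is needed to absorb the quadratic term by a bootstrap. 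The $(1+t-s)^{-1/2}$ weight delays potential blow-up to infinity and produces the subexponential rate $y(t) \le e^{C\sqrt{1+t}}$, rather than the exponential rate that would arise from a weightless Gronwall inequality.

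The main technical obstacle is the precise form of the sharp angular averaging estimate and its coupling with the $X$ norm: converting the weak $L^1_{x,v}$ control from Csisz\'ar-Kullback-Pinsker into the $L^\infty_x L^1_v$ type control demanded by $X$ will require a careful splitting $f = M + g$ and a separate treatment of each term in the bilinear collision integral. Once the global $X$-bound is established, the remaining arguments are standard: continuation extends the local solution to $[0,\infty)$; uniqueness in $L^\infty([0,T], X)$ follows from applying the same angular averaging estimate to the difference of two solutions, with the entropy smallness again enabling closure; and propagation of $W^{k,1}_\ell$ regularity proceeds by induction on $k+|\beta|$, differentiating \eqref{Duhamel_Boltzmann} to produce equations for $\p_x^\alpha \p_v^\beta f$ with source terms bilinear in lower-order derivatives, and with the velocity weight $m_\ell$ handled by classical Povzner-type moment estimates.
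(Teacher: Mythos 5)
Your proposal diverges from the paper's proof in two places, and both divergences break the argument.

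First, the claim that the angular averaging estimate produces a dispersive factor $(1+t-s)^{-1/2}$ is not what the estimate gives. Proposition~\ref{AAlemv2} converts the $\mathbb{S}^2$-integral into a one-dimensional spatial integral with Jacobian $\sim (q|v-v_*|)^{-1}$, and after using \textbf{(H1)} the resulting bound (see \eqref{second_bound}--\eqref{third_bound}) has the prefactor $(1+1/q)\int_{\TT^1}\rho^2\dd x$, with no square root anywhere. In particular this kernel \emph{blows up} as $q\to0$ rather than decaying for large $q$, which is exactly why the paper must combine it with the crude bilinear bound $\|S_q Q^+(f,f)\|_X\le C\|f\|_X^2$ via a $\min$ and split the time integral near $s=t$ (Lemma~\ref{small_entropy_ODE}); a $\sqrt{1+t-s}$ denominator would make that split unnecessary, so you are solving a fundamentally different (and easier) problem than the one the estimates actually produce.

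Second, the proposed integral inequality $y(t)\le y(0)+C\int_0^t (1+t-s)^{-1/2}\big(H^{1/2}y(s)^2+y(s)\big)\dd s$ cannot be closed globally for \emph{any} positive $H$: the $H^{1/2}y^2$ term dominates for large $y$, and the kernel is integrable, so the comparison ODE blows up in finite time independent of how small $H(f_{\mrm{in}}|M)$ is; and even discarding the quadratic term, the linear part gives exponential growth, not $e^{C\sqrt{1+t}}$. The paper avoids both problems by \emph{not} decomposing $f = M + g$ and instead bounding $\int_{\TT^1}\rho^2\dd x$ through a logarithmic interpolation (Lemma~\ref{rho_L2_lemma}): using the chain rule for relative entropy $H(f|M)\ge H(\rho|m)$ and Pinsker's inequality, one obtains, up to an $\alpha<1$ constant that encodes the entropy smallness (Lemma~\ref{small_entropy_lemma}), the nonlinearity $\varphi/\log\varphi$ rather than $\varphi^2$. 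It is precisely this $\varphi/\log\varphi$ structure, fed into the Gronwall-type Lemma~\ref{small_entropy_ODE}, that produces $e^{C\sqrt{1+t}}$ (solve $\varphi' = C\varphi/\log\varphi$), and a Csisz\'ar--Kullback--Pinsker splitting of the collision operator does not recover it.
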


For large data, we get the following strong global solution theory, which is the main contribution of this work.

	\begin{theorem}\label{Thm2}
		Let $B$ be a collision kernel satisfying \textbf{(H1)} and \textbf{(H2)}, and take initial data $f\in X\cap L^2_1(\Omega^1_x\times\RR_v^3)$. Then the initial value problem \eqref{1D_Boltzmann} admits a global solution $f\in L^\infty_{\mrm{loc}}([0,\infty),X)$, which is unique in $L^\infty([0,T],X)$ for any $T$, and we have the  estimate
		\begin{align}
			\norm{f(t)}_{L^\infty_x L^1_v}\le\norm{f(t)}_X\le C e^{Ct} \qquad t\in [0,\infty)
		\end{align}
		when $\Omega_x^1 = \mathbb{T}^1$, and
		\begin{align}
			\norm{f(t)}_{L^\infty_x L^1_v}\le C \qquad t\in [0,\infty)
		\end{align}
		when $\Omega_x^1 = \RR^1$, as well as dissipation
		\begin{align}
			\norm{f(t)}_{L^\infty_x L^1_v}\to 0\qquad\hbox{as}\quad t\to\infty.
		\end{align}
		when $\Omega_x^1 = \RR^1$ and $f_{\mrm{in}}\in W^{1,1}(\RR_x^1\times\RR_v^3)$,
		where $C = C(\norm{f_{\mrm{in}}}_X,\norm{f_{\mrm{in}}}_{L^1(1+|v|^2)},B)$.

		Furthermore, if $f_{\mrm{in}}\in W^{k,1}_\ell$ for some $k,\ell\ge 0$, then $f\in C([0,\infty),W^{k,1}_\ell)$.
	\end{theorem}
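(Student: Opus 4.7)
The plan is to combine a local Picard iteration in the space $X$ with a priori growth estimates on $\norm{f(t)}_X$ driven by the Bony--Tartar--Cercignani Lyapunov functional and the angular averaging estimate of \cite{BiryukCraigPanferov}, and then upgrade a naive Gronwall bound to the sharp rates claimed here. First, I would set up a contraction argument for the Duhamel formulation \eqref{Duhamel_Boltzmann} in $L^\infty([0,T],X)$ for small $T$: under \textbf{(H1)}, the bilinear form $Q$ acts boundedly on $X$ (exploiting the translation-invariance of the norm under $S_q$), producing a unique local solution up to a maximal time $T_*$ with blow-up criterion $\norm{f(t)}_X\to\iy$ as $t\uparrow T_*$ if $T_*<\iy$. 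Propagation of $W^{k,1}_\ell$ regularity would then be a posteriori bootstrap: moments follow from Povzner-type estimates using \textbf{(H1)}, and derivatives follow by differentiating the Duhamel formula, both closed by the control on $\norm{f(t)}_X$ obtained below.

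Second, I would derive the main integral inequality for the $X$-norm. Applying $S_{-t}$ to \eqref{Duhamel_Boltzmann}, taking $L^\infty_x L^1_v$, and using the angular averaging estimate of \cite{BiryukCraigPanferov} on the gain term (this is where the one-dimensional geometry between the transport direction $v_1$ and the post-collisional velocities in \eqref{post-collisional_velocities} enters), I expect to obtain an inequality of the form
\begin{align*}
\norm{f(t)}_X \le \norm{f_{\mrm{in}}}_X + C\int_0^t A(s)\,\norm{f(s)}_X\dd s,
\end{align*}
where $A(s)$ is essentially the instantaneous collision rate $\norm{Q^+(f,f)(s)}_{L^1_{x,v}}$. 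Hypothesis \textbf{(H2)} is crucial here to reconcile the unweighted $X$-norm with the $|v-v_*|$-weight carried by $Q^+$: the low velocity cutoff $B\equiv 0$ on $r\le R_0$ lets one divide by $|v-v_*|$ with impunity.

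Third, I would feed in different bounds on $\int_0^t A(s)\dd s$ in the two geometries. On $\RR^1_x$, the Bony functional
\begin{align*}
L[f] = \int_{\{x<y\}}\!\int_{\RR^3\times\RR^3}\! f(x,v)f(y,v_*)(v_1 - v_{*1})_+ \dd v\dd v_*\dd x\dd y
\end{align*}
is bounded by $\norm{f_{\mrm{in}}}_{L^1}\norm{f_{\mrm{in}}}_{L^1_1}$ and its time derivative along the flow is $\le -c\norm{Q^+(f,f)}_{L^1_{x,v}}$ (following Cercignani \cite{Cercignani92}), yielding $\int_0^\iy A(s)\dd s \le C$. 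On $\TT^1_x$ the ordering $x<y$ is not globally defined, so this Lyapunov functional is unavailable; there I only obtain the weaker bound $\int_0^t A(s)\dd s \le C(1+t)$ from conservation of energy and a direct $L^1$ computation. The sharp growth estimate for the above integral inequality (the main technical contribution of the paper, which I apply as a black box) then turns these into $\norm{f(t)}_X \le C$ on the line and $\norm{f(t)}_X \le Ce^{Ct}$ on the torus, which in both cases precludes blow-up and gives $T_*=\iy$. For the dissipation statement on $\RR^1_x$, I would split $f(t) = S_tf_{\mrm{in}} + \int_0^t S_{t-s}Q(f,f)(s)\dd s$: the $W^{1,1}$-hypothesis and dispersion of the free transport give $\norm{S_tf_{\mrm{in}}}_{L^\infty_x L^1_v}\to0$, while the Duhamel integral is controlled by the tail $\int_T^\iy A(s)\dd s$, which is small by the Bony bound, plus a transient term handled by the same sharp inequality.

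The main obstacle is the sharp growth estimate itself: a direct Gronwall applied to the integral inequality above yields $\norm{f(t)}_X \le \norm{f_{\mrm{in}}}_X\exp\!\bigl(C\int_0^t A(s)\dd s\bigr)$, which gives only $e^{Ct^2}$ on the torus and a uniform (but unrefined) bound with no decay on the line. Getting past this requires exploiting finer structure in the inequality, most likely by iterating the Duhamel formula once so that the gain term appears quadratically in $f$, and then extracting a cancellation that trades one factor of $\norm{f(s)}_X$ for an additional integration against $A$. The dissipation on $\RR^1_x$ is also delicate because pure dispersion would demand stronger weights than $W^{1,1}$ affords while pure Bony-type integrability gives only boundedness, so the two inputs must be combined quantitatively rather than separately.
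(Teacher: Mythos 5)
Your overall architecture (local contraction in $X$, Bony functional for a time-integrable collision rate, angular averaging to close an integral inequality, dispersion of $S_t$ for decay) matches the paper. However, there are several concrete errors in the middle step that would prevent the argument from closing.

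The central problem is your proposed integral inequality. You write
\begin{align*}
\norm{f(t)}_X \le \norm{f_{\mrm{in}}}_X + C\int_0^t A(s)\,\norm{f(s)}_X\dd s,
\end{align*}
but this is not what the angular averaging estimate of Proposition \ref{AAlemv2} produces, and it cannot be: when you apply that estimate to $S_q Q^+(f,f)$, both copies of $f$ end up integrated over $x$, so the bound on $\norm{S_q Q^+(f,f)}_X$ is of the form $(1+1/q)A(t)$, \emph{with no remaining factor of $\norm{f}_X$ at all}. The actual inequality (see \eqref{second_ODE_eq1}--\eqref{second_ODE_eq2}) is
\begin{align*}
\norm{f(t)}_X \le \norm{f(t_0)}_X + \int_{t_0}^t \min\Bigl\{\, C_1\norm{f(s)}_X^2,\ C_2\Bigl(1+\tfrac{1}{t-s}\Bigr)A(s)\,\Bigr\}\dd s,
\end{align*}
where the two branches play complementary roles: the $(1+1/(t-s))A(s)$ branch has a \emph{non-integrable} singularity as $s\uparrow t$, so it must be replaced near $s=t$ by the quadratic branch $C_1\norm{f(s)}_X^2$ coming from Lemma \ref{ContEstLemv2}. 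This $\min$ structure is essential, and the hard technical result is Lemma \ref{Bony_ODE}, which closes it by partitioning $[0,t]$ into subintervals on which $\int a$ is small and iterating a pointwise-in-time estimate obtained by optimizing the split point between the two branches. A linear Gronwall never enters; your statement that ``a direct Gronwall... gives only $e^{Ct^2}$ on the torus'' is also internally inconsistent, since with $\int_0^t A\le C(1+t)$ your own (incorrect) linear inequality would already give $e^{Ct}$.

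Your claim that the Bony functional is unavailable on $\TT^1_x$ is also wrong. The paper defines it with a Green's function $\ggg$ for the Laplacian on $\Omega_x^1$, and on the torus $\ggg'(x-x_*)$ takes the place of $-\tfrac12\operatorname{sgn}(x-x_*)$, at the cost of an error term $\ell(t)\ge 0$ (coming from $\ggg''=-\delta_0+1$) whose time integral is linear in $T$, producing precisely the $K_0(1+T)$ bound. Without Bony, you cannot bound $\int_0^T A$ on the torus by ``conservation of energy and a direct $L^1$ computation'': $A(s)$ is quadratic in $f$ with an $L^\infty_x$ factor, so any naive bound already needs $\norm{f(s)}_X$, which is circular. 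Finally, the paper also has to prove the Bony dissipation inequality directly at $X$-level regularity (where densities can be discontinuous and entropy is not assumed finite), which it does via a Kruzkov doubling-of-variables argument; smooth approximation plus weak-strong uniqueness, which you implicitly rely on by citing Cercignani for smooth solutions, is not available here.
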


	The proofs of Theorem \ref{Thm1} and Theorem \ref{Thm2} occupy Section \ref{Thm1_Proof_Section} and Section \ref{Thm2_Proof_Section} respectively.
	
	\section{Local Theory}
	
	Our local well-posedness result will follow from a bilinear estimate on the collision kernel $Q$, which we will prove by means of an angular averaging estimate. We first define the \emph{gain} and \emph{loss} terms, denoted $Q^+$ and $Q^-$ respectively, as
	
	\begin{align*}
		Q^+(g,f) = \int_{\RR^3_{v_*}\!\times\mathbb{S}^2_\sigma}
		B\left( |v-v_*|,\frac{v-v_*}{|v-v_*|}\cdot\sigma\right)
		g(v_*')f(v')\dd\sigma\dd v_*
	\end{align*}
	and
	\begin{align*}
		Q^-(g,f) = \int_{\RR^3_{v_*}\!\times\mathbb{S}^2_\sigma}
		B\left( |v-v_*|,\frac{v-v_*}{|v-v_*|}\cdot\sigma\right)
		g(v_*)f(v)\dd\sigma\dd v_*,
	\end{align*}
	such that $Q(g,f) = Q^+(g,f) - Q^-(g,f)$, where $v',v_*'$ are defined as in \eqref{post-collisional_velocities}, and $Q^\pm(g,f)\ge0$. The local well-posedness theory for \eqref{1D_Boltzmann} in the space $X$ will be proved using the following identity.

	\begin{lemma}\label{ContEstLemv2} Let $B$ be a collision kernel satisfying \textbf{(H1)}. For any $f,g\in X$, and for any $q\ge0$, we have
		\begin{equation}
			\| S_q Q^\pm(g,f) \|_{X}\le 8\pi\norm{\phi}_{L^1[0,\infty)}\| f\|_{X}\| g\|_X.
		\end{equation}
	\end{lemma}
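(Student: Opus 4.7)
The plan is to reduce matters to a uniform $L^\infty_x L^1_v$ bound by exploiting the monotonicity of the $X$-norm under the shifts $S_q$: since $S_{q'} S_q = S_{q+q'}$ and the supremum in $\norm{\cdot}_X$ is taken over $q' \geq 0$, one has $\norm{S_q T}_X \leq \norm{T}_X$ for any $q \geq 0$ and any $T$. It therefore suffices to bound $\norm{S_q Q^\pm(g,f)}_{L^\infty_x L^1_v}$ uniformly in $q \geq 0$.

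The loss term is handled directly. Since $\phi$ is nonincreasing and integrable, $r\phi(r) \leq \int_0^r \phi(s)\,ds \leq \norm{\phi}_{L^1}$, so $\int_{S^2} B(|v-v_*|,\cos\theta)\,d\sigma \leq 4\pi \norm{\phi}_{L^1}$ uniformly in $v, v_*$. The pointwise factorization
\begin{equation*}
S_q Q^-(g,f)(x,v) = (S_q f)(x,v)\int B(|v-v_*|,\cos\theta)\, g(x-qv_1,v_*)\,d\sigma\,dv_*,
\end{equation*}
together with $\norm{g(x-qv_1,\cdot)}_{L^1_v} \leq \norm{g}_X$, then yields (after integrating in $v$ and taking $\sup_x$) the bound $\norm{S_q Q^-(g,f)}_{L^\infty_x L^1_v} \leq 4\pi \norm{\phi}_{L^1} \norm{f}_X \norm{g}_X$.

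The gain term requires an angular-averaging argument. I would apply the pre/post collisional involution $(v,v_*,\sigma) \leftrightarrow (v',v_*',\sigma')$, which has unit Jacobian and preserves $B$, to rewrite
\begin{equation*}
\int_v S_q Q^+(g,f)(x,v)\,dv = \int B\, f(x - q v_1',v)\, g(x - q v_1',v_*)\,d\sigma\,dv_*\,dv,
\end{equation*}
so that the shift is now by $qv_1'$ rather than $qv_1$. The virtue is that $v_1' = (v_1+v_{*1})/2 + |v-v_*|\sigma_1/2$ depends on $\sigma$ only through $\sigma_1$; parameterizing $\sigma = (\sigma_1, \sqrt{1-\sigma_1^2}(\cos\vphi,\sin\vphi))$ and changing variables $\sigma_1 \mapsto s = v_1'$ with Jacobian $2/|v-v_*|$ (so that $s$ ranges over an interval of length $|v-v_*|$), combined with $B \leq \phi(|v-v_*|)|v-v_*|$ and the trivial $\vphi$-integration $\int d\vphi = 2\pi$, gives the angular-averaging identity
\begin{equation*}
\int_{S^2} B(|v-v_*|,\cos\theta)\, F(v_1')\,d\sigma \leq 4\pi \phi(|v-v_*|) \int_\RR F(s)\,ds
\end{equation*}
valid for any nonnegative $F$ supported in the natural interval.

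The main obstacle is closing the inequality after this reduction. I would then integrate in $v, v_*$, marginalize in the transverse components $\bar v, \bar v_*$, exploit the monotonicity $\phi(|v-v_*|) \leq \phi(|v_1 - v_{*1}|)$ to land on a genuine $1$-D convolution in the $v_1$-direction weighted by $\phi$, and apply $1$-D Young's inequality in concert with the $\sup_{q' \geq 0}$ characterization of $\norm{f}_X$ and $\norm{g}_X$ at a suitable second shift so as to extract the combination $\norm{\phi}_{L^1}\norm{f}_X\norm{g}_X$. Assembling the resulting estimate with the loss-term bound produces the stated constant $8\pi$
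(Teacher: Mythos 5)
Your setup, the reduction to a uniform $L^\infty_x L^1_v$ bound, the loss-term estimate, and the pre/post-collisional change of variables followed by the angular-averaging identity with constant $4\pi\phi(|v-v_*|)$ are all correct and match the paper's approach. The gap is in the final step for $Q^+$. After angular averaging you are left, schematically, with
\[
4\pi\int_{v}\int_{v_*}\phi(|v-v_*|)\int_{s\in J(v,v_*)} g(x-qs,v_*)\,f(x-qs,v)\dd s\dd v_*\dd v,
\]
where $J(v,v_*)$ is an interval of length $|v-v_*|$ centered at $(v_1+v_{*,1})/2$. The essential structural fact you need is that after recentering $u=s-v_1$ (equivalently $w=q(s-v_1)$), the constraint $s\in J$ gives $|u|\le|v-v_*|$, and hence by monotonicity $\phi(|v-v_*|)\le\phi(|u|)$. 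This transfers the $\phi$-weight from the $(v,v_*)$ variables onto the auxiliary $u$-variable, after which the $u$-integral decouples entirely and yields $\int_\RR\phi(|u|)\dd u=2\norm{\phi}_{L^1}$, while the $v_*$- and $v$-integrals produce $\norm{g}_{L^\infty_xL^1_v}$ and $\norm{S_qf}_{L^\infty_xL^1_v}\le\norm{f}_X$ respectively.

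Your proposed inequality $\phi(|v-v_*|)\le\phi(|v_1-v_{*,1}|)$, though true, does not perform this transfer: after using it, the $\phi$-weight still lives on $(v_1,v_{*,1})$ and the $s$- (or $u$-) integral, which ranges over an interval of length $|v-v_*|$ determined by the full velocities, remains uncontrolled. The one-dimensional Young's inequality in $v_1$ would then force you to pair $\norm{\phi}_{L^1}$ with an $L^\infty_{v_1}$ norm of a marginal of $f$ or $g$, which the $X$-norm does not provide, or else to bound $\phi$ in $L^\infty$ and lose the integrability of $\phi$ needed for the final constant. In short: you should bound $\phi(|v-v_*|)$ by $\phi(|u|)$ using the constraint $|u|\le|v-v_*|$ from the angular-averaging interval, not by $\phi(|v_1-v_{*,1}|)$, and then the $\phi$-weighted integral in $u$ and the two velocity integrals separate cleanly to give $8\pi\norm{\phi}_{L^1}\norm{f}_X\norm{g}_X$ with no Young-type convolution inequality required.
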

	
	The proof of this lemma depends on the following angular averaging estimate. We note that the result for the full Grad cutoff case $B(r,\cos\theta)\in L^1(\sin\theta d\theta)$ is not available --
	this estimate is proved by converting an integral in $\sigma$ into an integral in the spatial variable, so we therefore require $B(r,\cos\theta)\in L^\infty_\theta$.

	\begin{proposition}[due to \cite{BiryukCraigPanferov}]\label{AAlemv2}  Let $B(r,\cos\theta)$ be a bounded collision kernel, and let $\widetilde B(r) = \sup_{|\xi|\le1}B(r,\xi)$. Then for all $q>0$, we have the bound
		
		\begin{align*}
			&\int_{\RR^3}\! Q^+(g,f)(x-qv_1,v)\,\dd v \\
			&\quad\le 		4\pi\int_{\RR_v^3}\!\int_{\RR_{v_*}^3}\!
			\int_{(q/2)(v_1 + v_{*,1} -|v-v_*|)}^{(q/2)(v_1 + v_{*,1} +|v-v_*|)}
			g(x-y,v_*)f(x-y,v)\frac{\widetilde B(|v-v_*|)}{q|v-v_*|}\dd y \dd v_*\dd v.
		\end{align*} 
		
	\end{proposition}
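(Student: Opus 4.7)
The plan is to combine the classical pre-post collisional involution with a conversion of the $\sigma$-integral to a spatial $y$-integral. Starting from
\begin{align*}
\int_{\RR^3}Q^+(g,f)(x-qv_1,v)\dd v = \iiint B\bigl(|v-v_*|,\cos\theta\bigr)g(x-qv_1,v_*')f(x-qv_1,v')\dd\sigma\dd v_*\dd v,
\end{align*}
I first apply the measure-preserving involution $T:(v,v_*,\sigma)\mapsto(v',v'_*,(v-v_*)/|v-v_*|)$ on $\RR^3_v\times\RR^3_{v_*}\times\mathbb{S}^2$. A direct check shows that $T$ fixes both $|v-v_*|$ and $\sigma\cdot(v-v_*)/|v-v_*|$, hence leaves $B$ invariant; it also exchanges $(v,v_*)\leftrightarrow(v',v'_*)$. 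In the new variables (which I relabel as $v,v_*,\sigma$) the integrand becomes
\begin{align*}
B\bigl(|v-v_*|,\tfrac{v-v_*}{|v-v_*|}\cdot\sigma\bigr)g(x-qv'_1,v_*)f(x-qv'_1,v),
\end{align*}
where $v'_1=\tfrac12(v_1+v_{*,1})+\tfrac12\sigma_1|v-v_*|$ is now the first component of the post-collisional velocity computed from the new variables. Using $f,g\ge 0$ and $B\le\widetilde B(|v-v_*|)$, I obtain the upper bound
\begin{align*}
\iiint\widetilde B(|v-v_*|)\,g(x-qv'_1,v_*)f(x-qv'_1,v)\dd\sigma\dd v_*\dd v.
\end{align*}

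Next I convert the $\sigma$-integration to a spatial one. Since the integrand depends on $\sigma$ only through $\sigma_1$ (via $v'_1$), I parameterize $\sigma=(\sigma_1,\sqrt{1-\sigma_1^2}\cos\varphi,\sqrt{1-\sigma_1^2}\sin\varphi)$ with $\dd\sigma=\dd\sigma_1\dd\varphi$ and integrate out $\varphi\in[0,2\pi]$ to obtain a factor $2\pi$. The substitution $y=qv'_1$ has Jacobian $\dd\sigma_1=\tfrac{2}{q|v-v_*|}\dd y$, with $y$ ranging precisely over $\bigl[\tfrac{q}{2}(v_1+v_{*,1}-|v-v_*|),\tfrac{q}{2}(v_1+v_{*,1}+|v-v_*|)\bigr]$. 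Multiplying the factors $2\pi$ and $2$ yields the prefactor $4\pi$, and the denominator $q|v-v_*|$ appears naturally, producing exactly the claimed right-hand side.

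The principal technical obstacle is justifying the first step: verifying that $T$ is a genuine measure-preserving involution of $\RR^3_v\times\RR^3_{v_*}\times\mathbb{S}^2$ and that it transforms the spatial shift $x-qv_1$ into $x-qv'_1$ with the correct first component of the new post-collisional velocity. This is classical (see \cite{CercignaniIllnerPulvirenti94,Villani02}) but must be implemented carefully; once in hand, the $\sigma_1\to y$ substitution is essentially forced by matching the length $q|v-v_*|$ of the $y$-interval to the Jacobian of $\sigma_1\mapsto v'_1$.
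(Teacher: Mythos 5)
Your argument is correct and follows essentially the same route as the paper's proof: the pre-post collisional involution to move the $\sigma$-dependence into the spatial argument, the crude bound $B\le\widetilde B$, integration out of the azimuthal angle to produce $2\pi$, and the linear substitution from $\sigma_1$ (or $\cos\theta$) to $y=qv_1'$. The only cosmetic difference is that you work directly with $\sigma_1$ as a coordinate on the sphere rather than first passing through $\cos\theta$; the Jacobian bookkeeping and limits of integration match the paper's.
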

	
	\begin{proof}
		This proposition is found in \cite{BiryukCraigPanferov}, and the proof is similar to the outline in  \cite{BCPslides}. For completeness, we reproduce a full proof here. We expand the gain term $Q^+$ to obtain

	\begin{align}
		&\int_{\RR^3}\!Q^+(g,f)(x-qv_1,v)\,\dd v  	\nonumber\\
		&= \int_{\RR^3}\!\int_{\RR^3}\!\int_{\mathbb{S}^2}\! g(x-qv_1,v_*') f(x-qv_1,v')
			B\left (|v-v_*|,\sigma\cdot\frac{v-v_*}{|v-v_*|}\right) 
			\dd\sigma\dd v_* \dd v 		\nonumber\\
		&= \int_{\RR^3}\!\int_{\RR^3}\!\int_{\mathbb{S}^2}\! g(x-qv_1',v_*) f(x-qv_1',v)
			B\left(|v-v_*|,\sigma\cdot\frac{v-v_*}{|v-v_*|}\right) 
			\dd\sigma\dd v_* \dd v 			\nonumber\\
		&\le \int_{\RR^3}\!\int_{\RR^3}\!\int_{\mathbb{S}^2}\! g(x-qv_1',v_*) f(x-qv_1',v)
			\widetilde B(|v-v_*|) \dd\sigma\dd v_* \dd v
		\label{AAlemv2_1}
\end{align}

	where we have applied a classical pre-postcollisional change of variables in the second equality -- that is, we interchange the variables $(v',v_*')$ and $(v,v_*)$, which is an operation with unit Jacobian. We now wish to expand 
	$$
		v_1' = \frac{v_1 + v_{*,1}}{2} + \sigma\cdot e_1\frac{|v-v_*|}{2},
	$$	
	and we would like to integrate in $\sigma$. Setting $F(x,v,v_*) = g(x,v_*) f(x,v)$, we examine the integral \eqref{AAlemv2_1} in $\sigma$ and get

	\begin{align*}
		\int_{\mathbb{S}^2}\! F(x-qv_1',v,v_*)\dd\sigma
		&= \int_{\mathbb{S}^2}\! 
			F\left( x-q\left(\frac{v_1 + v_{*,1}}{2} +\sigma\cdot e_1\frac{|v-v_*|}{2}\right),
			v,v_* \right)
			\dd\sigma \\
		&= \int_0^{2\pi}\!\!\int_0^\pi\! 
			F\left( x-q\left(\frac{v_1 + v_{*,1}}{2} +\cos\theta\frac{|v-v_*|}{2}\right),
			v,v_* \right)\sin\theta
		\dd\theta\dd\varphi \\
		&= 2\pi\int_{-1}^1\!
			F\left( x-q\left(\frac{v_1 + v_{*,1}}{2} +z\frac{|v-v_*|}{2}\right),v,v_* \right)
			\dd z,
	\end{align*}
	where in the second equality we have used spherical coordinates $(\theta,\varphi)$ about the $e_1$ axis, setting $\cos\theta = \sigma\cdot e_1$. In the next equality, we have used the change of variables $z = \cos\theta$ with $\mrm{d}z = -\sin\theta\dd\theta$. We now use this to rewrite \eqref{AAlemv2_1} as
	\begin{align*}
		&
		2\pi\int_{\RR^3}\!\int_{\RR^3}\!
			\int_{-1}^1\!
			F\left( x-q\left(\frac{v_1 + v_{*,1}}{2} +z\frac{|v-v_*|}{2}\right),v,v_* \right)
		\widetilde B(|v-v_*|) \dd z\dd v_* \dd v \\
		&\qquad\qquad=
		4\pi\int_{\RR_v^3}\!\int_{\RR_{v_*}^3}\!
		\int_{(q/2)(v_1 + v_{*,1} -|v-v_*|)}^{(q/2)(v_1 + v_{*,1} +|v-v_*|)}
		g(x-y,v_*)f(x-y,v)\frac{\widetilde B(|v-v_*|)}{q|v-v_*|}\dd y \dd v_*\dd v
	\end{align*}
	where we have used the change of variables 
	\begin{align*}
		y = q\left(\frac{v_1 + v_{*,1}}{2} +z\frac{|v-v_*|}{2}\right).
	\end{align*}
	\end{proof}
	
	We can now prove Lemma \ref{ContEstLemv2}.
	
	\begin{proof}[Proof of Lemma \ref{ContEstLemv2}]
		We first note that 
		\begin{align*}
			\norm{S_q Q^\pm(g,f)}_X 
			&= \sup_{s\ge0}\norm{S_{q+s} Q^\pm(g,f)}_{L^\infty_x L^1_v} \\
			&\le\sup_{s\ge0}\norm{S_{s} Q^\pm(g,f)}_{L^\infty_x L^1_v} \\
			&=\sup_{s\ge0} \esssup_{x\in\Omega_x^1}\int_{\RR^3}\! Q^\pm(g,f)(x-sv_1,v)\dd v.
		\end{align*}
		We consider the two cases $Q^+$ and $ Q^-$ separately. We define $\Phi(u) = \int_{\mathbb{S}^2}B(u,\sigma)\dd \sigma$, and re-express $Q^-(g,f) = f(\Phi*g)$. Picking arbitrary $s$ and $x$, we estimate
		
		\begin{align*}
			\int_{\RR_v^3}\! Q^-(g,f)(x-sv_1,v)\dd v 
			&= \int_{\RR_v^3}\!\int_{\RR_{v_*}^3}\! g(x-sv_1,v_*)\Phi(v-v_*)f(x-sv_1,v) \dd v_*\dd v \\
			&\le\norm{\Phi* g}_{L^\infty_x L^\infty_v}\int_{\RR_v^3} 	f(x-sv_1,v) \dd v, \\
			&\le\norm{\Phi}_{L^\infty}\norm{g}_{L^\infty_x L^1_v} \norm{S_s f}_{L^\infty_x L^1_v} \\
			&\le 4\pi\norm{B}_{L^\infty_{r,\theta}}\norm{g}_X\norm{f}_X \\
			&\le 4\pi\norm{\phi}_{L^1[0,\infty)}\norm{g}_X\norm{f}_X,
		\end{align*}
		where the first and second inequalities hold for almost every $x$, and where in the last inequality we have used hypothesis \textbf{(H1)} to estimate
			\begin{align*}
			\sup_{r,\theta} B(r,\cos\theta)
				&=\sup_{r,\theta}\int_0^\infty 1_{0\le s\le r}\frac{B(r,\cos\theta)}{r}\dd s \\
				&\le\int_0^\infty\sup_{r,\theta}1_{0\le s\le r}\frac{B(r,\cos\theta)}{r}\dd s \\
				&\le\int_0^\infty\sup_{r,\theta} 1_{0\le s\le r}\phi(r)\dd s
			=\int_0^\infty\phi(s)\dd s = \norm{\phi}_{L^1([0,\infty))}.
		\end{align*}
		
		We now would like to show the same bound for the $Q^+(g,f)$ term. Choosing $s,x$ arbitrary, we apply the angular averaging estimate from Proposition \ref{AAlemv2} and get
		
		\begin{align*}
			&\int_{\RR^3}\! Q^+(g,f)(x-sv_1,v)\dd v \\
			&\le 4\pi\int_{\RR_v^3}\!\int_{\RR_{v_*}^3}\!
			\int_{(q/2)(v_1 + v_{*,1}-|v-v_*|)}^{(q/2)(v_1 +v_{*,1}+|v-v_*|)}
			g(x-y,v_*)f(x-y,v)\frac{\widetilde B(|v-v_*|)}{q|v-v_*|}\dd y \dd v_*\dd v.
			\\
			&\le
			4\pi\int_{\RR_v^3}\!\int_{\RR_{v_*}^3}\!
			\int_{q(v_1 -|v-v_*|)}^{q(v_1 +|v-v_*|)}
			g(x-y,v_*)f(x-y,v)\frac{\widetilde B(|v-v_*|)}{q|v-v_*|}\dd y \dd v_*\dd v
		\end{align*}
		where in the last step we have expanded the range of integration. By changing variables as $w = y-q v_1$, we can bound this as
		
		\begin{align*}
			&4\pi  \int_{\RR_v^3}\!\int_{\RR_{v_*}^3}\!
				\int_{- q|v-v_*|}^{q|v-v_*|}
						g(x-w-qv_1,v_*) f(x-w-q v_1,v) \frac{\widetilde B(|v-v_*|)}{q|v-v_*|}\dd w\dd v_*\dd v \\
			&\le  4\pi\int_{\RR_v^3}\!\int_{\RR_{v_*}^3}\!
				\int_{- q|v-v_*|}^{q|v-v_*|}
					g(x-w-qv_1,v_*) f(x-w-q v_1,v) \frac{\phi(|v-v_*|)}{q}\dd w \dd v_*\dd v \\
			&\le 4\pi \int_{\RR_v^3}\!\int_{\RR_{v_*}^3}\!
				\int_{- q|v-v_*|}^{q|v-v_*|}
					g(x-w-qv_1,v_*) f(x-w-q v_1,v)\frac{\phi(|w|/q)}{q}\dd w \dd v_*\dd v 
		\end{align*}
		where in the last inequality we have used that the integration bounds imply $|w|\le q|v-v_*|$ and the fact that $\phi$ is non-increasing by hypothesis \textbf{(H1)}. We can then bound this by
		
		\begin{align*}
			&
			4\pi \int_{\RR_v^3}\!\int_{\RR_{v_*}^3}\!
			\int_{\RR}
			g(x-w-qv_1,v_*) f(x-w-q v_1,v) \frac{\phi(|w|/q)}{q}\dd w \dd v_*\dd v 
			\\
			 &\le 4\pi\norm{g}_{L^\iy_xL^1_v}\int_{\RR_v^3}\!\int_{\RR}
			f(x-w-q v_1,v)\frac{\phi(|w|/q)}{q}\dd w \dd v \\
			 & \le 4\pi \norm{g}_{L^\iy_xL^1_v}\norm{f}_X 
			\int_\RR\frac{\phi(|w|/q)}{q}\dd w						\\
			 & =8\pi \norm{\phi}_{L^1(\RR)}\norm{g}_X\norm{f}_X ,
		\end{align*}
		where the two inequalities hold for all $x$, due to integration in $w$. This concludes the proof.
	\end{proof}
	
	These estimates now suffice to prove a local well-posedness theory in the space $X$.
	
	\begin{lemma} \label{local_lemma}
		Let $0\le f_{\mrm{in}}\in X$. Then the problem \eqref{1D_Boltzmann} admits a  solution $f\in L^\infty_{\mrm{loc}}([0,T_*),X)$ for some maximal time $T_*\in (0,\infty]$, which is unique in $L^\infty([0,T],X)$ for any $T<T_*$. If $T_*<\infty$, then $\norm{f(t)}_X$ is unbounded over $t\in [0,T_*)$.
	\end{lemma}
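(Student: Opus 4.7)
The plan is to run a Picard iteration for the Duhamel formulation \eqref{Duhamel_Boltzmann} in a ball of $L^\infty([0,T],X)$ for short $T$, with all quantitative estimates supplied by Lemma \ref{ContEstLemv2}. The enabling structural observation is that $\norm{S_t g}_X \le \norm{g}_X$ for every $t \ge 0$: this is immediate from $\norm{g}_X = \sup_{q \ge 0}\norm{S_q g}_{L^\infty_x L^1_v}$, since the substitution $q \mapsto q+t$ restricts the supremum to the tail $q \ge t$. Combined with Lemma \ref{ContEstLemv2} applied to each of $Q^\pm$, this yields
\[
\norm{\int_0^t S_{t-s} Q(f,f)(s)\dd s}_X \le 16\pi \norm{\phi}_{L^1}\int_0^t \norm{f(s)}_X^2 \dd s.
\]

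Define $\Phi(f)(t) = S_t f_{\mrm{in}} + \int_0^t S_{t-s} Q(f,f)(s)\dd s$. Setting $R = 2\norm{f_{\mrm{in}}}_X$ and choosing $T > 0$ so that $16\pi\norm{\phi}_{L^1} R T \le 1/2$, the estimate above shows $\Phi$ maps the closed ball $B_R = \{f \in L^\infty([0,T],X) : \norm{f}_{L^\infty_t X} \le R\}$ into itself. Using the bilinear splitting $Q(f,f) - Q(g,g) = Q(f-g, f) + Q(g, f-g)$ together with Lemma \ref{ContEstLemv2}, I obtain $\norm{\Phi(f) - \Phi(g)}_{L^\infty_t X} \le (1/2)\norm{f-g}_{L^\infty_t X}$, so Banach's fixed point theorem produces a solution in $B_R$. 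Nonnegativity is propagated via the standard integrating-factor trick: running the iteration on the gain-only form with the loss $Q^-$ absorbed into an exponential weight keeps every iterate nonnegative, and the limit coincides with the Picard fixed point by uniqueness.

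For uniqueness in the full class $L^\infty([0,T],X)$ and not just in the ball, the same bilinear estimate applied to the difference of two solutions $f,g$ with common initial data gives
\[
\norm{f(t) - g(t)}_X \le 16\pi \norm{\phi}_{L^1}\bigl(\norm{f}_{L^\infty_t X} + \norm{g}_{L^\infty_t X}\bigr)\int_0^t \norm{f(s) - g(s)}_X \dd s,
\]
and Gronwall forces $f \equiv g$. The maximal time $T_*$ is then the supremum of times to which the solution extends, with uniqueness ensuring consistent gluing on overlaps. The blow-up criterion follows by contradiction: if $T_* < \infty$ and $M := \sup_{t \in [0,T_*)}\norm{f(t)}_X < \infty$, restarting the local construction at $t_0$ near $T_*$ with initial datum $f(t_0)$ yields an extension on $[t_0, t_0 + \tau]$ for some $\tau = \tau(M) > 0$, contradicting maximality. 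Since Lemma \ref{ContEstLemv2} already encapsulates all the analytic content, no serious obstacle is anticipated; the only mildly delicate point is positivity preservation, which requires the integrating-factor form of the iteration rather than the bare Duhamel form, but this is a routine manipulation for Boltzmann-type equations under angular cutoff.
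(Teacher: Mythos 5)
Your proposal is correct and follows essentially the same route as the paper: a Banach fixed point argument for the Duhamel map on a ball in $L^\infty([0,T],X)$, powered entirely by Lemma \ref{ContEstLemv2} and the monotonicity $\norm{S_t g}_X \le \norm{g}_X$, followed by the standard continuation/blow-up dichotomy. Two minor deviations are worth noting: for uniqueness in the full class $L^\infty([0,T],X)$ you invoke Gronwall directly, whereas the paper argues by contradiction at the first time of disagreement and reapplies the local contraction on a short window — both work, and yours is slightly cleaner; and you explicitly flag that nonnegativity must be propagated via a Kaniel--Shinbrot/integrating-factor iteration rather than the bare Duhamel iteration, a point the paper's proof passes over in silence even though $f\ge 0$ is used downstream (e.g.\ to discard $Q^-$ in the Duhamel bound in Section \ref{Thm1_Proof_Section}), so that remark is a genuine and welcome addition.
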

	\begin{proof}
		The proof follows by a standard contraction mapping argument in $L^\infty([0,T],X)$. By construction of  the space $X$, we have 
		\begin{align*}
			(t\mapsto S_t f_{\mrm{in}}) \in L^\infty([0,T],X)
		\end{align*}
	since $\norm{S_t f_{\mrm{in}}}_X$ is non-increasing in $t$. By Lemma \ref{ContEstLemv2} we have that the operator
	\begin{align*}
		\mathcal R:L^\infty([0,T],X)\to L^\infty([0,T],X),\quad
		\mathcal{R}f(t) = S_t f_{\mrm{in}} + \int_0^t S_{t-s} Q(f,f)(s)\dd s
	\end{align*}
	whose fixed points solve \eqref{1D_Boltzmann} is continuous, and by standard bilinear identities we get the estimate
	\begin{align*}
		\norm{\ca R f - \ca R g}_{L^\infty_T\! X}
		\le C(B) T \left(\norm{f}_{L^\infty_T\! X}+\norm{g}_{L^\infty_T\! X}\right)\norm{f-g}_{L^\infty_T\! X}.
	\end{align*}
	We define the first iterate $f_0 = S_t f_{\mrm{in}}$, and choose $R\ge 2\norm{f_{\mrm{in}}}_{ X}$. If $\norm{f}_{L^\infty_T X}\le R$, then the bound
	\begin{align*}
		\norm{\ca R f }_{L^\infty_T X} \le \norm{f_0}_{L^\infty_T X} + 
		 C T\norm{f}_{L^\infty_T X}^2
		&\le \norm{f_0}_{L^\infty_T X} +  C T R^2  
		\\
		&\le R
	\end{align*}
	holds for $T>0$ sufficiently small. If we also have $\norm{g}_{L^\infty_T X}\le R$ then the bound
	\begin{align*}
		\norm{\ca R f - \ca R g}_{L^\infty_T X} \le 2CT R \norm{f-g}_{L^\infty_T X}\le \frac12 \norm{f-g}_{L^\infty_T X}
	\end{align*}
	holds for $T\le 1/4CR$, so that $\ca R$ is a contraction on the ball of radius $R$ in $L^\infty_T X$ for $T\le C' R^{-1}$, and \eqref{1D_Boltzmann} has unique solutions in this ball. 
	
	We now prove uniqueness on larger intervals. If $f,g\in L^\infty([0,\overline T],X)$ were two different solutions for some $\overline T>0$, then we define the minimal time at which they differ as
	\begin{align*}
		0<T\le \tau = \inf\{t\in [0,\overline T]\,:\,f(t)\neq g(t)\}.
	\end{align*}
	Choosing $R\ge\max(\norm{f}_{L^\infty_T X},\norm{g}_{L^\infty_T X})$ and taking $f(\tau-\epsilon) = g(\tau-\epsilon)$ as initial data for $\epsilon$ sufficiently small and satisfying $2 \epsilon< C' R^{-1}$, we can apply a contraction mapping argument on the time interval $[\tau-\epsilon,\tau+\epsilon]$, which gives that $f(\tau+\epsilon) = g(\tau+\epsilon)$. By contradiction, this proves uniqueness on $[0,\overline{T}]$.
	
	Similarly, if $f\in L^\infty([0,T_*),X)$ is a solution and $T_*<\infty$, then choosing $R\ge\norm{f}_{L^\infty([0,T_*), X)}$, choosing $\epsilon$ such that $2\epsilon< C' R^{-1}$ and taking as initial data $f(T_*-\epsilon)$, we can extend $f$ to a solution on $[0,T_*+\epsilon]$, so that the time $T_*$ is not maximal.
	\end{proof}

	\section{Higher Order Estimates}
	
	We would now like to prove a propagation of regularity result for the solutions $f\in L^\infty_{\mrm{loc}}([0,T_*),X)$ provided by the local theory. By analogy with the Beale-Kato-Majda criterion for 3D Navier-Stokes \cite{BealeKatoMajda84}, we will show that the norm $X$ alone controls all higher order norms $W^{k,1}_\ell$, and therefore a bound $f\in L^\infty([0,T],X)$ is enough to show boundedness of the $W^{k,1}_\ell$ norms in time $t\in [0,T]$.
	
	Our strategy can be summarized with the following diagram,
	
	\begin{equation}
	\label{section_3_diagram}
		\begin{tikzcd}
		X \arrow[r] \arrow[dr] &  L_\ell^1 \\
		&
		W_0^{1,1} \arrow[r] 
		& 
		W_\ell^{1,1} \arrow[r]
		&
		W^{2,1}_\ell \arrow[r]
		&
		\cdots \arrow[r]
		&
		W_\ell^{k,1}
	\end{tikzcd}
	\end{equation}
	where $\ca Y\to \ca Z$ represents the statement that for any solution $f$ to \eqref{1D_Boltzmann}, we have
	\begin{align*}
		f\in L^1([0,T),\ca Y),\quad f|_{t=0}\in\ca Z\quad \implies\quad f\in C_{\mrm{b}}([0,T),\ca Z)
	\end{align*}
	for any $T<\infty$. We note that this property is transitive. Since free transport is a strongly continuous semigroup in $W^{k,1}_\ell$ for all $k,\ell\ge0$, the proof of the diagram \eqref{section_3_diagram} will reduce to the following estimates on the collisional terms $Q^\pm$, and to standard Gronwall estimates.
	
	\begin{lemma}\label{XtoL^1_ell_Lemma}
		For any $f,g\in X\cap L^1_\ell(\Omega_x^1\times\RR_v^3)$, we have the bound
		\begin{align*}
			\norm{Q^\pm(g,f)}_{L^1_\ell}
			\le C_\ell\norm{\Phi}_{L^\infty_v}
			\left(
			\norm{g}_{L^1_\ell}\norm{f}_X + \norm{g}_X \norm{f}_{L^1_\ell}
			\right).
		\end{align*}
	\end{lemma}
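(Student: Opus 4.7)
The plan is to handle $Q^-$ and $Q^+$ separately, first pointwise in $x$ and then integrating over $x$, exploiting the convolution structure for the loss term and a weighted weak formulation plus energy conservation for the gain term.

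For the loss term I would write $Q^-(g,f)(x,v) = f(x,v)\,(\Phi *_v g(x,\cdot))(v)$, where $\Phi(u) = \int_{\SSS^2} B(|u|,\hat u\cdot\sigma)\dd\sigma$. The pointwise convolution estimate $\norm{\Phi *_v g(x,\cdot)}_{L^\infty_v}\le\norm{\Phi}_{L^\infty_v}\norm{g(x,\cdot)}_{L^1_v}$ yields
\begin{align*}
\int_{\RR^3_v}\! m_\ell(v)Q^-(g,f)(x,v)\dd v \le \norm{\Phi}_{L^\infty_v}\,\norm{g(x,\cdot)}_{L^1_v}\int_{\RR^3_v}\!m_\ell(v)f(x,v)\dd v,
\end{align*}
and integrating in $x$ together with the embedding $\norm{g}_{L^\infty_x L^1_v}\le\norm{g}_X$ (take $q=0$ in the definition of $X$) produces the contribution $\norm{\Phi}_{L^\infty_v}\norm{g}_X\norm{f}_{L^1_\ell}$.

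For the gain term I would apply the pre-postcollisional change of variables (as already done in the proof of Proposition \ref{AAlemv2}, with unit Jacobian) to obtain the weighted weak form
\begin{align*}
\int_{\RR^3_v}\! m_\ell(v) Q^+(g,f)(x,v)\dd v = \int_{\RR^3_v\times\RR^3_{v_*}\times\SSS^2}\! B\,g(x,v_*)f(x,v)\,m_\ell(v')\dd\sigma\dd v_*\dd v.
\end{align*}
The key pointwise inequality is $m_\ell(v')\le C_\ell(m_\ell(v)+m_\ell(v_*))$, which follows from energy conservation $|v'|^2\le|v|^2+|v_*|^2$ together with the elementary bound $(a+b)^{\ell/2}\le C_\ell(a^{\ell/2}+b^{\ell/2})$ for $a,b\ge0$, split into the subadditive regime $\ell\le 2$ and the regime $\ell>2$ (where the constant is $2^{\ell/2-1}$). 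Substituting and using $\int_{\SSS^2} B\dd\sigma = \Phi(v-v_*)\le\norm{\Phi}_{L^\infty_v}$ decouples the $v$ and $v_*$ integrals:
\begin{align*}
\int_{\RR^3_v}\! m_\ell(v)Q^+(g,f)(x,v)\dd v \le C_\ell\norm{\Phi}_{L^\infty_v}\left[\norm{g(x,\cdot)}_{L^1_v}\norm{f(x,\cdot)}_{L^1_\ell}+\norm{g(x,\cdot)}_{L^1_\ell}\norm{f(x,\cdot)}_{L^1_v}\right].
\end{align*}
Integrating in $x$ and applying $\norm{\cdot}_{L^\infty_x L^1_v}\le\norm{\cdot}_X$ to the $L^1_v$ factor on each side produces both terms of the claimed bound.

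There is no real obstacle here: the argument is a routine weighted collisional estimate. The only mildly technical ingredient is the combination of the pre-postcollisional change of variables with the weight $m_\ell(v')$, and the elementary additive separation of that weight via energy conservation. Once these are in place, everything reduces to Fubini and the embedding $X\hookrightarrow L^\infty_x L^1_v$.
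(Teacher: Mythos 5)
Your argument is correct and follows the same route as the paper's proof: convolution estimate for $Q^-$, pre-postcollisional change of variables plus the pointwise weight bound $m_\ell(v')\le C_\ell(m_\ell(v)+m_\ell(v_*))$ for $Q^+$, then H\"older in $x$ with the embedding $X\hookrightarrow L^\infty_x L^1_v$. The only cosmetic difference is that you spell out the derivation of the weight inequality from energy conservation, whereas the paper states the constant $2^{\ell-1}$ directly.
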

	\begin{proof}
		The proof is elementary, and is included for completeness. We first get a bound in the $v$ variable, and can then apply a suitable Hölder estimate in $x$. Since $Q^-(g,f) = (\Phi*g)f$, we can bound 
		\begin{align*}
			\norm{Q^-(g,f)m_\ell}_{L^1_v}
			= \norm{(\Phi*g)f m_\ell}_{L^1_v} 
			\le\norm{\Phi}_{L^\infty_v}\norm{g}_{L^1_v}\norm{fm_\ell}_{L^1_v}
		\end{align*}
		by standard convolution estimates. We can similarly bound the gain term as
		\begin{align*}
			\norm{Q^+(g,f)m_\ell}_{L^1_v}
			&=\int_{\RR_v^3\times\RR_{v_*}^3\times\SSS_\sigma^2}\!
			Bg_*'f' m_\ell(x,v)\dd\sigma\dd v_*\dd v \\
			&=\int_{\RR_v^3\times\RR_{v_*}^3\times\SSS_\sigma^2}\!
			Bg_*f m_\ell(x,v')\dd\sigma\dd v_*\dd v  \\
			&\le
			\int_{\RR_v^3\times\RR_{v_*}^3\times\SSS_\sigma^2}\!
			Bg_*f 2^{\ell-1}(m_\ell(x,v)+m_\ell(x,v_*))\dd\sigma\dd v_*\dd v,
		\end{align*}
		where we have used a pre-post collisional change of variables. Applying the bound on $Q^-$ gives the estimates
		\begin{align}
			\label{moment_control_in_v}
			\norm{Q^\pm(g,f)m_\ell}_{L^1_v}
			\le C_\ell\norm{\Phi}_{L^\infty_v}
			\left(
			\norm{g m_\ell}_{L^1_v}\norm{f}_{L^1_v} + \norm{g}_{L^1_v}\norm{f m_\ell}_{L^1_v}
			\right).
		\end{align}
		
		Integrating in $x$ gives the bound
		\begin{align*}
			\norm{Q^\pm(g,f)}_{L^1_\ell} 
			&\le
			C_\ell\norm{\Phi}_{L^\infty_v}
			\left(
			\norm{g m_\ell}_{L^1_{x,v}}\norm{f}_{L^\infty_x L^1_v}
			+
			\norm{g}_{L^\infty_x L^1_v}\norm{fm_\ell}_{L^1_{x,v}}
			\right) \\
			&\le C_\ell\norm{\Phi}_{L^\infty_v}
			\left(
			\norm{g}_{L^1_\ell}\norm{f}_X + \norm{g}_X \norm{f}_{L^1_\ell}
			\right),
		\end{align*}
		where we have used the bound $\norm{g}_{L^\infty_x L^1_v}\le\norm{g}_X$, and this concludes the proof.
	\end{proof}
	
	\begin{lemma} \label{Lemma3.2}
		For any $f,g\in W^{1,1}(\Omega_x^1\times\RR_v^3)$, we have the bound
		\begin{align*}
			\norm{Q^\pm(g,f)}_{W^{1,1}}
			\le
			\norm{\Phi}_{L^\infty_v}
			\left(
			\norm{g}_{W^{1,1}}\norm{f}_{X}
			+
			\norm{g}_{X}\norm{f}_{W^{1,1}}
			\right).
		\end{align*}
	\end{lemma}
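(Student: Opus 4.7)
The plan is to establish the Leibniz-type commutation identity
\[
\p_\mu Q^\pm(g,f) = Q^\pm(\p_\mu g, f) + Q^\pm(g, \p_\mu f)
\]
for every first-order derivative $\p_\mu \in \{\p_x, \p_{v_1}, \p_{v_2}, \p_{v_3}\}$, and then to reduce the $W^{1,1}$ bound to repeated applications of Lemma \ref{XtoL^1_ell_Lemma} at $\ell = 0$.

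For $\p_x$ the identity is immediate, since neither $B$ nor the collision map $(v,v_*,\sigma) \mapsto (v',v_*')$ depends on $x$. For velocity derivatives I would treat $Q^\pm$ separately. Writing $Q^-(g,f) = (\Phi \ast g)f$ with the convolution in $v$ and using $\p_{v_i}\Phi(v-v_*) = -\p_{v_{*,i}}\Phi(v-v_*)$, an integration by parts in $v_*$ yields $\p_{v_i}(\Phi \ast g) = \Phi \ast \p_{v_i} g$ (justified via mollification of $\Phi$ if necessary), and the Leibniz rule then produces the identity for $Q^-$.

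The main step is the $v$-derivative of $Q^+$, where the post-collisional velocities $v',v_*'$ themselves depend on $v$. Here I would perform the change of variables $u = v_* - v$ to rewrite
\begin{align*}
Q^+(g,f)(v) = \int_{\RR^3}\!\int_{\SSS^2}\! B(|u|,-\hat u\cdot\sigma)\,
g\!\left(v+\tfrac{u-\sigma|u|}{2}\right) f\!\left(v+\tfrac{u+\sigma|u|}{2}\right)\dd\sigma\dd u,
\end{align*}
so that $v$ now enters only through a translation in the arguments of $g$ and $f$. Then $\na_v$ passes under the integral, the product rule distributes it onto $g$ and $f$, and reversing the change of variables gives $\na_v Q^+(g,f) = Q^+(\na_v g, f) + Q^+(g, \na_v f)$. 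This is the only non-routine ingredient of the proof.

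With the four commutation identities in hand, expand $\norm{Q^\pm(g,f)}_{W^{1,1}}$ as $\norm{Q^\pm(g,f)}_{L^1}$ plus the four terms $\norm{\p_\mu Q^\pm(g,f)}_{L^1}$, each of which splits into a sum $\norm{Q^\pm(\p_\mu g, f)}_{L^1} + \norm{Q^\pm(g,\p_\mu f)}_{L^1}$. Apply Lemma \ref{XtoL^1_ell_Lemma} at $\ell = 0$, i.e.\ $\norm{Q^\pm(h_1,h_2)}_{L^1} \lesssim \norm{\Phi}_{L^\infty_v}\bigl(\norm{h_1}_{L^1}\norm{h_2}_X + \norm{h_1}_X\norm{h_2}_{L^1}\bigr)$, to each term, choosing in every case to place the $X$ norm on the factor that is \emph{not} carrying a derivative, so that the derivative factor is measured in $L^1$ and contributes only to $\norm{g}_{W^{1,1}}$ or $\norm{f}_{W^{1,1}}$. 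Summing the resulting bounds yields the stated estimate.
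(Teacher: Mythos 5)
Your approach is the same as the paper's: commute the first-order derivative through $Q^\pm$ via the Leibniz identity, then reduce to the $L^1$ bilinear estimate. Your sketch of the translation-invariance argument (change of variables $u = v_*-v$ so that $v$ enters only through translation of the arguments of $g$ and $f$) is a correct and standard proof of the commutation identity, which the paper simply cites as ``classical'' without proof.

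One point, however, is imprecise and would need fixing. You propose to bound $\norm{Q^\pm(\p_\mu g, f)}_{L^1}$ by invoking Lemma~\ref{XtoL^1_ell_Lemma} at $\ell=0$, ``choosing in every case to place the $X$ norm on the factor that is not carrying a derivative.'' But the conclusion of Lemma~\ref{XtoL^1_ell_Lemma} is a \emph{sum} of two terms, not a choice: applying it literally to $Q^\pm(\p_\mu g, f)$ produces a term $\norm{\p_\mu g}_X\norm{f}_{L^1}$. For $g\in W^{1,1}$ one only knows $\p_\mu g\in L^1$, and $L^1\not\subset X$, so $\norm{\p_\mu g}_X$ may well be $+\infty$ and the bound becomes vacuous. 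What is actually needed is the unweighted in-$v$ estimate $\norm{Q^\pm(h_1,h_2)}_{L^1_v}\le\norm{\Phi}_{L^\infty_v}\norm{h_1}_{L^1_v}\norm{h_2}_{L^1_v}$ (this is \eqref{moment_control_in_v} at $\ell=0$, where the two terms collapse into one), followed by an asymmetric H\"older inequality in $x$, placing $L^\infty_x$ on the underived factor and $L^1_x$ on the derived one. This is precisely what the paper does, and your proof should be phrased that way rather than as a black-box application of the stated lemma.
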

	\begin{proof}
		Using Lemma \ref{XtoL^1_ell_Lemma} we can control $\norm{Q^\pm(g,f)}_{L^1}$ so it remains to control $\norm{\na_{x,v}Q^\pm(g,f)}_{L^1}$. Using the classical identity  \begin{align*}
			\na_{x,v}Q^\pm(g,f) = Q^\pm(\na_{x,v}g,f) + Q^\pm(g,\na_{x,v}f)
		\end{align*}
		and the trivial convolutional bound $\norm{Q^\pm(g,f)}_{L^1_v}\le\norm{\Phi}_{L^\infty_v}\norm{g}_{L^1_v}\norm{f}_{L^1_v}$, we estimate
		\begin{align*}
			\norm{\na_{x,v}Q^\pm(g,f)}_{L^1_{x,v}}
			&\le
			\norm{\Phi}_{L^\infty_v}\int_{\Omega_x^1}\! 
			\left(
			\norm{\na_{x,v}g}_{L^1_v}\norm{f}_{L^1_v} 
			+
			\norm{g}_{L^1_v}\norm{\na_{x,v}f}_{L^1_v}
			\right)
			\dd x \\
			&\le
			\norm{\Phi}_{L^\infty_v}
			\left(
			\norm{\na_{x,v}g}_{L^1_{x,v}}\norm{f}_{L^\infty_x L^1_v}
			+
			\norm{g}_{L^\infty_x L^1_v}\norm{\na_{x,v}f}_{L^1_{x,v}}
			\right) \\
			&\le \norm{\Phi}_{L^\infty_v}
			\left(
			\norm{g}_{W^{1,1}}\norm{f}_{X}
			+
			\norm{g}_{X}\norm{f}_{W^{1,1}}
			\right),
		\end{align*}
		which concludes the proof.
	\end{proof}
	
	\begin{lemma} \label{Lemma3.3}
		For any $f,g\in W^{1,1}_\ell(\Omega_x^1\times\RR_v^3)$, we have the bound
		\begin{align*}
			\norm{Q^\pm(g,f)}_{W^{1,1}_\ell}
			\le
			C_\ell\norm{\Phi}_{L^\infty_v}
			\left(
			\norm{g}_{W^{1,1}_\ell}\norm{f}_{W^{1,1}}
			+
			\norm{g}_{W^{1,1}}\norm{f}_{W^{1,1}_\ell}
			\right).
		\end{align*}
	\end{lemma}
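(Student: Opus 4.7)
The plan is to reduce the estimate to the pointwise-in-$x$ $L^1_v$ bound
\begin{equation*}
\norm{Q^\pm(g,f)\,m_\ell}_{L^1_v} \le C_\ell\norm{\Phi}_{L^\infty_v}\left(\norm{g\,m_\ell}_{L^1_v}\norm{f}_{L^1_v}+\norm{g}_{L^1_v}\norm{f\,m_\ell}_{L^1_v}\right)
\end{equation*}
already established in the proof of Lemma~\ref{XtoL^1_ell_Lemma}, and then to integrate in $x$ via Hölder together with the one-dimensional Sobolev embedding $W^{1,1}(\Omega_x^1)\hookrightarrow L^\infty(\Omega_x^1)$. Applied to $F(x)=\int h(x,v)\,m_\ell(v)\dd v$ (noting that $F'(x)=\int\p_x h(x,v)\,m_\ell(v)\dd v$), this embedding gives $\norm{h\,m_\ell}_{L^\infty_x L^1_v}\le C\norm{h}_{W^{1,1}_\ell}$, and applied uniformly to each translate $S_q h$ in the same way, it yields $\norm{h}_X\le C\norm{h}_{W^{1,1}}$.

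I would decompose $\norm{Q^\pm(g,f)}_{W^{1,1}_\ell}=\norm{Q^\pm(g,f)\,m_\ell}_{L^1}+\norm{\nabla_{x,v}(Q^\pm(g,f)\,m_\ell)}_{L^1}$. The zeroth-order piece is immediate from Lemma~\ref{XtoL^1_ell_Lemma} together with the embedding $\norm{\cdot}_X\le C\norm{\cdot}_{W^{1,1}}$ above. For the first-order piece, Leibniz gives $(\nabla_{x,v}Q^\pm(g,f))\,m_\ell + Q^\pm(g,f)\,\na_v m_\ell$. Since $|\na_v m_\ell|\le\ell\,m_\ell$, the second summand is bounded by $C_\ell\norm{Q^\pm(g,f)}_{L^1_\ell}$, which again reduces to Lemma~\ref{XtoL^1_ell_Lemma}.

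For the term $(\nabla_{x,v}Q^\pm(g,f))\,m_\ell$ I would invoke the commutation identity $\nabla_{x,v} Q^\pm(g,f)=Q^\pm(\nabla_{x,v}g,f)+Q^\pm(g,\nabla_{x,v}f)$ used in Lemma~\ref{Lemma3.2}. For $\p_x$ this is trivial; for $\p_v$ applied to $Q^+$, the change of variables $u=v_*-v$ makes $v$ a pure translation parameter in $g(v+u_*')$ and $f(v+u')$ so that $\p_v$ passes inside the integral, while for $Q^-=f(\Phi*g)$ it follows from $\p_v(\Phi*g)=\Phi*\p_v g$ via integration by parts in $v_*$. Applying the pointwise bound above to each of the four resulting terms and integrating in $x$ with an $L^1_x\times L^\infty_x$ Hölder pairing, I place the undifferentiated factor into $L^\infty_x$ via Sobolev and the differentiated factor into $L^1_x$. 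For example,
\begin{equation*}
\int_{\Omega_x^1}\!\norm{\na g(x,\cdot)}_{L^1_v}\norm{f(x,\cdot)\,m_\ell}_{L^1_v}\dd x\le\norm{\na g}_{L^1_{x,v}}\norm{f\,m_\ell}_{L^\infty_x L^1_v}\le C\norm{g}_{W^{1,1}}\norm{f}_{W^{1,1}_\ell},
\end{equation*}
and the remaining three pairings are analogous.

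The main point to watch is this Hölder assignment: the $L^\infty_x$ norm must never fall on a factor already carrying a derivative, because invoking the 1D Sobolev embedding on a differentiated factor would demand a second derivative and hence the $W^{2,1}$ norm rather than $W^{1,1}$. The bilinear structure of the four terms produced by the commutation identity, combined with the $|\na_v m_\ell|\le\ell m_\ell$ splitting, does admit a consistent assignment in which only undifferentiated (but possibly weighted) factors need $L^\infty_x$ control, and this is exactly what delivers the right-hand side in the stated form.
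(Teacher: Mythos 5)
Your proposal is correct and follows essentially the same route as the paper: you use the commutation identity $\nabla_{x,v}Q^\pm(g,f)=Q^\pm(\nabla_{x,v}g,f)+Q^\pm(g,\nabla_{x,v}f)$, apply the pointwise-in-$x$ moment bound \eqref{moment_control_in_v} to each of the four resulting terms, and close with a Hölder pairing in $x$ that places only undifferentiated (possibly weighted) factors in $L^\infty_x$ via the 1D Sobolev embedding. The only cosmetic differences are that you make explicit the Leibniz split on $m_\ell$ (the paper uses the equivalent norm $\norm{Q^\pm m_\ell}_{L^1}+\norm{(\nabla_{x,v}Q^\pm)m_\ell}_{L^1}$ directly) and you spell out the embedding $\norm{\cdot}_X\le C\norm{\cdot}_{W^{1,1}}$ needed to convert the $X$-norm factors from Lemma~\ref{XtoL^1_ell_Lemma} into $W^{1,1}$ norms, which the paper leaves implicit.
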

	\begin{proof}
		The proof follows classical ideas, and is very similar to Lemma 5.16 of \cite{GualdaniMischlerMouhot17}, with finer control of moments. The bound on $\norm{Q^\pm(g,f)}_{L^1_\ell}$ follows from Lemma \ref{XtoL^1_ell_Lemma}. It then suffices to estimate
		\begin{align*}
			\norm{\na_{x,v}Q^\pm(g,f)}_{L^1_\ell}
			&\le
			\norm{Q^\pm(\na_{x,v}g,f)}_{L^1_\ell} 
			+
			\norm{Q^\pm(g,\na_{x,v}f)}_{L^1_\ell} \\
			&\le
			C_\ell\norm{\Phi}_{L^\infty_v}
			\int_{\Omega_x^1}\!
			\Big(
			\norm{\na_{x,v}g\,m_\ell}_{L^1_v}\norm{f}_{L^1_v}
			+
			\norm{\na_{x,v}g}_{L^1_v}\norm{fm_\ell}_{L^1_v} \\
			&\qquad\qquad\qquad\qquad
			+
			\norm{gm_\ell}_{L^1_v}\norm{\na_{x,v}f}_{L^1_v}
			+
			\norm{g}_{L^1_v}\norm{\na_{x,v}f\,m_\ell}_{L^1_v}
			\Big)
			\dd x \\
			&\le
			C_\ell\norm{\Phi}_{L^\infty_v}
			\Big(
			\norm{\na_{x,v}g\,m_\ell}_{L^1_{x,v}}\norm{f}_{L^\infty_xL^1_v}
			+
			\norm{\na_{x,v}g}_{L^1_{x,v}}\norm{fm_\ell}_{L^\infty_x L^1_v} \\
			&\qquad\qquad\qquad\ \ 
			+
			\norm{gm_\ell}_{L^\infty_xL^1_v}\norm{\na_{x,v}f}_{L^1_{x,v}} 
			+
			\norm{g}_{L^\infty_x L^1_v}\norm{\na_{x,v}f\,m_\ell}_{L^1_{x,v}}
			\Big) \\
			&\le 
			C_\ell'\norm{\Phi}_{L^\infty_v}
			\left(
			\norm{gm_\ell}_{W^{1,1}}\norm{f}_{W^{1,1}}
			+
			\norm{g}_{W^{1,1}}\norm{f m_\ell}_{W^{1,1}}
			\right)
		\end{align*}
		where we have used the Sobolev embedding $W^{1,1}(\Omega_x^1)\subset L^\infty(\Omega_x^1)$ and collected terms in the final inequality.
	\end{proof}
	
	\begin{lemma} \label{Lemma3.4}
		For any $f,g\in W^{k,1}_\ell(\Omega_x^1\times\RR_v^3)$ for $k\ge2$, we have the bound
		\begin{align*}
			\norm{Q^\pm(g,f)}_{W^{k,1}_\ell}
			\le
			C_{k,\ell}\norm{\Phi}_{L^\infty_v}
			\left(
			\norm{g}_{W^{k,1}_\ell}\norm{f}_{W^{k-1,1}_\ell}
			+
			\norm{g}_{W^{k-1,1}_\ell}\norm{f}_{W^{k,1}_\ell}
			\right).
		\end{align*}
	\end{lemma}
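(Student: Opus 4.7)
The plan is to combine the Leibniz rule for $Q^\pm$ with the weighted $v$-moment inequality \eqref{moment_control_in_v} from the proof of Lemma \ref{XtoL^1_ell_Lemma}, then integrate in $x$ using H\"older's inequality together with the one-dimensional Sobolev embedding $W^{1,1}(\Omega_x^1)\hookrightarrow L^\infty(\Omega_x^1)$.

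First I would record the bilinearity and derivative identity: for any multi-index $\gamma$ in $(x,v)$ with $|\gamma|\le k$,
\begin{align*}
\partial^\gamma Q^\pm(g,f) = \sum_{\gamma'\le \gamma}\binom{\gamma}{\gamma'} Q^\pm(\partial^{\gamma'}g,\partial^{\gamma-\gamma'}f).
\end{align*}
For $x$-derivatives this is immediate since $x$ enters only as a parameter; for $v$-derivatives one first changes variable $u=v_*-v$ under the collision integral, after which $B$ becomes independent of $v$ while $v'$ and $v_*'$ depend linearly on $v$ with identity Jacobian, so the product rule gives the claim. It therefore suffices to bound each term $Q^\pm(\partial^{\gamma'}g,\partial^{\gamma-\gamma'}f)$ in $L^1_\ell$ by the right-hand side of the lemma.

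Setting $j=|\gamma'|$ and $m=|\gamma-\gamma'|$ so that $j+m\le k$, I would apply \eqref{moment_control_in_v} pointwise in $x$ and then integrate in $x$, using H\"older's inequality on each of the two resulting product terms. For each such product I have the freedom to place one factor in $L^\infty_x L^1_v$ and the other in $L^1_x L^1_v$; the former is controlled by a $W^{1,1}_x L^1_v$ norm through the one-dimensional Sobolev embedding, at the cost of precisely one extra $x$-derivative.

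The main obstacle, and the reason for the hypothesis $k\ge 2$, is the boundary Leibniz terms $(j,m)=(k,0)$ and $(0,k)$, where all derivatives are concentrated on a single factor. In the case $(k,0)$ the derivative count forces $f$ and $fm_\ell$ into $L^\infty_x L^1_v$, producing the bound $\norm{g}_{W^{k,1}_\ell}\norm{f}_{W^{1,1}_\ell}$; this falls within the claim precisely because $\norm{f}_{W^{1,1}_\ell}\le\norm{f}_{W^{k-1,1}_\ell}$ whenever $k\ge 2$. For the intermediate terms $1\le j,m\le k-1$ one has full flexibility: placing $g$ in $L^\infty_x$ yields $\norm{g}_{W^{j+1,1}_\ell}\norm{f}_{W^{m,1}_\ell}\le \norm{g}_{W^{k,1}_\ell}\norm{f}_{W^{k-1,1}_\ell}$, while placing $f$ in $L^\infty_x$ yields the symmetric bound $\norm{g}_{W^{k-1,1}_\ell}\norm{f}_{W^{k,1}_\ell}$. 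Summing over all $\gamma$ with $|\gamma|\le k$ and $\gamma'\le\gamma$ with the binomial weights produces the constant $C_{k,\ell}$ and completes the proof.
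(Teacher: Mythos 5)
Your proposal is correct and follows essentially the same route as the paper: apply the Leibniz rule for $Q^\pm$, invoke \eqref{moment_control_in_v} pointwise in $x$, then integrate in $x$ using the one-dimensional Sobolev embedding $W^{1,1}(\Omega_x^1)\hookrightarrow L^\infty(\Omega_x^1)$. Your breakdown of the boundary terms $(k,0)$, $(0,k)$ versus the intermediate terms, and the explanation of where $k\ge 2$ is needed, is simply a more explicit account of what the paper compresses into ``we conclude by standard Sobolev embeddings.''
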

	\begin{proof}
		The proof follows by classical ideas, and is similar to the proof of Theorem 2.1 of \cite{MouhotVillani04} with finer control on derivatives -- we use the Leibniz rule to expand 
		\begin{align*}
			\norm{\na_{x,v}^k Q^\pm(g,f)}_{L^1_\ell}
			&\le
			2^k\sum_{j=0}^k \norm{Q^\pm(\na_{x,v}^j g,\na_{x,v}^{k-j}f)}_{L^1_\ell} \\
			&\le
			\sum_{j=0}^k C_{k,\ell}	\norm{\Phi}_{L^\infty_v}
			\int_{\Omega_x^1}\!
			\norm{\na_{x,v}^j g\, m_\ell}_{L^1_v} \norm{\na_{x,v}^{k-j}f\, m_\ell}_{L^1_v}
			\dd x
		\end{align*}
		where we have applied \eqref{moment_control_in_v},
		and we conclude by standard Sobolev embeddings.
	\end{proof}
	
	\begin{proposition}\label{regularity_proposition}
		Take initial data $0\le f_{\mrm{in}}\in X$, and let $f\in L^\infty_{\mrm{loc}}([0,T_*),X)$ be the unique solution to \eqref{1D_Boltzmann} from Lemma \ref{local_lemma}. If $f_{\mrm{in}}\in W^{k,1}_\ell$ for some $k,\ell\ge 0$, then $f\in C([0,T_*),W^{k,1}_\ell)$. 
		
		If $f_{\mrm{in}}\in L^1(1+|v|^2)$, then $f$ satisfies the conservation laws \eqref{conservation_laws}. Furthermore, if $f_{\mrm{in}}\in L^1(1+|v|^2)\cap L\log L$, then $f$ also satisfies the entropy inequality \eqref{entropy_inequality}.
	\end{proposition}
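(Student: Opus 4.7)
The plan is to walk through the diagram \eqref{section_3_diagram} using the collisional bounds of Lemmas \ref{XtoL^1_ell_Lemma}--\ref{Lemma3.4}, together with the fact that the free transport semigroup $S_t$ is an isometry on $W^{k,1}_\ell$ and strongly continuous there. Fix any $T<T_*$ and set $R=\norm{f}_{L^\infty([0,T],X)}<\infty$, which is finite by hypothesis. Applying $\norm{\cdot}_{L^1_\ell}$ to the Duhamel identity \eqref{Duhamel_Boltzmann} and invoking Lemma \ref{XtoL^1_ell_Lemma} yields
\begin{equation*}
	\norm{f(t)}_{L^1_\ell}\le \norm{f_{\mrm{in}}}_{L^1_\ell}+C_\ell R\int_0^t\norm{f(s)}_{L^1_\ell}\dd s,
\end{equation*}
so Gronwall gives $f\in L^\infty([0,T],L^1_\ell)$. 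The same scheme applied via Lemma \ref{Lemma3.2} propagates $W^{1,1}$; combining this with the $L^1_\ell$ bound and Lemma \ref{Lemma3.3} propagates $W^{1,1}_\ell$. Higher regularity is then obtained by induction on $k$: Lemma \ref{Lemma3.4} bounds $\norm{Q^\pm(f,f)}_{W^{k,1}_\ell}$ by a product involving $\norm{f}_{W^{k-1,1}_\ell}$ (already controlled by the inductive hypothesis) and $\norm{f}_{W^{k,1}_\ell}$, so Gronwall again closes the estimate. Continuity $f\in C([0,T],W^{k,1}_\ell)$ follows because $t\mapsto S_tf_{\mrm{in}}$ is strongly continuous and the Duhamel integral $t\mapsto\int_0^t S_{t-s}Q(f,f)(s)\dd s$ is continuous in $t$ since its integrand is uniformly bounded in $W^{k,1}_\ell$ over $[0,T]$. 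Since $T<T_*$ was arbitrary, we get $f\in C([0,T_*),W^{k,1}_\ell)$.

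For the conservation laws, under the assumption $f_{\mrm{in}}\in L^1(1+|v|^2)$, the moment propagation above (with $\ell=2$) gives $f\in L^\infty_{\mrm{loc}}([0,T_*),L^1_2)$. This integrability plus the standard bound $\norm{Q^\pm(f,f)m_2}_{L^1_{x,v}}\lesssim\norm{f}_X\norm{f}_{L^1_2}$ legitimizes integrating the Duhamel formula against the collision invariants $1,v,|v|^2$ by Fubini, and the cancellations \eqref{collision_kernel_cancellation} then yield \eqref{conservation_laws} pointwise in $t$.

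The more delicate statement is the entropy inequality. For smooth, well-decaying solutions one has the classical identity $\tfrac{d}{dt}H[f(t)]=-D_H(t)$, integrated to \eqref{entropy_inequality}; the difficulty here is that $L\log L$ data alone gives no $W^{k,1}_\ell$ regularity, so we cannot differentiate $H[f(t)]$ directly. The plan is the standard regularization argument: approximate $f_{\mrm{in}}$ by a sequence $f_{\mrm{in}}^n\in W^{k,1}_\ell\cap L\log L\cap L^1_2$ (mollification plus truncation, preserving nonnegativity and mass) so that $f_{\mrm{in}}^n\to f_{\mrm{in}}$ in $X\cap L^1_2\cap L\log L$. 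The local theory (Lemma \ref{local_lemma}) and its contraction estimate give convergence of the associated solutions $f^n\to f$ in $L^\infty([0,T],X)$; propagation of $L^1_2$ moments supplies tightness in $v$. For each $f^n$ the classical entropy identity holds, so
\begin{equation*}
	H[f^n(t)]+\int_0^tD_H^n(s)\dd s=H[f_{\mrm{in}}^n].
\end{equation*}
Passing to the limit uses lower semi-continuity of $H$ and of $D_H$ (the latter via Fatou applied to the nonnegative integrand), together with continuity of $H[f_{\mrm{in}}^n]\to H[f_{\mrm{in}}]$ arising from the uniform $L^1_2$ control (which prevents mass from escaping to infinity where $\log$ could blow up). This yields \eqref{entropy_inequality} for the limit. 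The main obstacle I expect is this last step: producing an approximation scheme that converges simultaneously in $X$, in $L^1_2$, and strongly enough in $L\log L$ to identify the limit of $H[f_{\mrm{in}}^n]$, while keeping the approximating solutions smooth enough that the classical entropy identity is available on $[0,T]$.
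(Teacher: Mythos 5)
Your treatment of the regularity propagation ($W^{k,1}_\ell$) and the conservation laws is correct and essentially coincides with the paper's argument: both proceed by Gronwall through the chain of Lemmas \ref{XtoL^1_ell_Lemma}--\ref{Lemma3.4} along the diagram \eqref{section_3_diagram}, and both use Lemma \ref{XtoL^1_ell_Lemma} with $\ell=2$ to legitimize the moment integrations behind \eqref{conservation_laws}. (One small point: $S_t$ is not an isometry on $W^{k,1}_\ell$ for $k\ge1$ or $\ell>0$ — the paper uses the polynomial growth bound $\norm{S_tf_{\mrm{in}}}_{W^{k,1}_\ell}\le C_{k,\ell}(1+t)^{k+\ell}$ — but this does not affect the Gronwall argument.)

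The entropy inequality is where your route departs from the paper's, and there you have a genuine gap, which you partly anticipated. Your scheme requires producing approximations $f^n_{\mrm{in}}\to f_{\mrm{in}}$ in the $X$ norm by data regular enough to run the classical entropy identity, so that the local contraction estimate gives $f^n\to f$ in $L^\infty([0,T],X)$. But this cannot be done in general: if $f_{\mrm{in}}\in X$ has a discontinuous density $\rho(x)=\int f_{\mrm{in}}\dd v$, then any $f^n_{\mrm{in}}\in W^{1,1}$ has continuous density $\rho^n$ by Sobolev embedding, and since $\norm{f_{\mrm{in}}-f^n_{\mrm{in}}}_X\ge\norm{\rho-\rho^n}_{L^\infty_x}$, the $X$ norm of the difference is bounded below by the jump size of $\rho$, so it never tends to zero. (The paper makes exactly this observation at the start of Section \ref{Thm2_Proof_Section}: the space $X$ is not amenable to approximation by smooth data, and one cannot rely on continuous dependence from the local theory.) Without $X$-convergence you lose the contraction argument that identifies $\lim f^n$ with $f$, and as the paper also notes, weak convergence of the initial data does not by itself force weak convergence of the solutions along a subsequence — you would need some uniqueness device anyway. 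The paper sidesteps the whole issue by invoking Lions' weak-strong uniqueness \cite{Lions94}: the strong solution $f$ satisfies the hypotheses of Theorems IV.1 and V.1 of that paper (uniform bounds on $\Phi*f$ in $L^\infty_{x,v}$ and on $|v|f$ in $L^1_{x,v}$), so it coincides on $[0,T]$ with the DiPerna--Lions renormalized solution launched from $f_{\mrm{in}}$, and the latter satisfies \eqref{entropy_inequality} by construction. That is a cleaner path than regularization and you should use it in place of the approximation argument.
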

	\begin{proof}
		The proof that $f\in C([0,T_*),W^{k,1}_\ell)$ reduces to proving the diagram \eqref{section_3_diagram}. Arguing abstractly, we suppose given Banach spaces $\ca Y,\ca Z$ such that $S_t$ is a strongly continuous semigroup in $\ca Z$, with $\norm{S_t f_{\mrm{in}}}_{\ca Z}\le C_{\ca Z}(t)\norm{f_\mrm{in}}_{\ca Z}$ for $C_{\ca Z}\in L^\infty_{\mrm{loc}}([0,\infty))$, and such that the estimate
		\begin{align}
			\label{Q_Y_Z_bound}
			\norm{Q(f,f)}_{\ca Z}\le C_{\ca Y,\ca Z}\norm{f}_{\ca Y}\norm{f}_{\ca Z}
		\end{align}
		holds. We can then take the $\ca Z$ norm of the Duhamel formulation \eqref{Duhamel_Boltzmann} to get that
		\begin{align*}
			\norm{f(t)}_{\ca Z} \le C_{\ca Z}(t)\norm{f_{\mrm{in}}}_{\ca Z}
			+
			\int_0^t\! C_{\ca Z}(t-s)C_{\ca Y,\ca Z}\norm{f(s)}_{\ca Y}\norm{f(s)}_{\ca Z}\dd s.
		\end{align*}
		By a Gronwall estimate, we then immediately get that if $f\in L^1([0,T),\ca Y)$ for some $T<\infty$ and $f_{\mrm{in}}\in\ca Z$, then we have $f\in L^\infty([0,T),\ca Z)$. Since we have that $S_tf_{\mrm{in}}\in C([0,\infty),\ca Z)$ by strong continuity, we can use the bound \eqref{Q_Y_Z_bound} to immediately deduce continuity $f\in C_{\mrm{b}}([0,T),\ca Z)$ from the Duhamel formulation \eqref{Duhamel_Boltzmann}.
		
		We now note that our assumptions on $\ca Y$ and $\ca Z$ hold for every arrow $\ca Y\to\ca Z$ in the diagram \eqref{section_3_diagram}. It is easy to check strong continuity of $S_t$ in $W^{k,1}_\ell$, and furthermore we have the estimate $\norm{S_t f_{\mrm{in}}}_{W^{k,1}_\ell}\le C_{k,\ell}(1+t)^{k+\ell}$. The bound \eqref{Q_Y_Z_bound} was proved in Lemmas \ref{XtoL^1_ell_Lemma}, \ref{Lemma3.2}, \ref{Lemma3.3}, and \ref{Lemma3.4}. 
		
		By assumption in the proposition, we have $f\in L^\infty_{\mrm{loc}}([0,T_*),X)$, which implies that $f\in L^1([0,T),X)$ for any finite $T<T_*$, and we have that $f_{\mrm{in}}\in W^{k,1}_\ell$. But using transitivity of the diagram \eqref{section_3_diagram}, we have shown that this implies $f\in C_{\mrm{b}}([0,T),W^{k,1}_\ell)$ for any such $T$, which proves the claim.

		To prove the next claim, we consider $f_{\mrm{in}}\in X\cap L^1(1+|v|^2)$, which therefore implies $f\in L^\infty([0,T],X\cap L^1(1+|v|^2))$ for $T<T_*$,
		which then implies that $Q(f,f)\in L^1(1+|v|^2)$ by Lemma \ref{XtoL^1_ell_Lemma}. We then immediately get that the conservation laws \eqref{conservation_laws} hold by the cancellation properties \eqref{collision_kernel_cancellation} of $Q(f,f)$.
		
		To prove the entropy inequality when $f\in X\cap L^1(1+|v|^2)\cap L\log L$, we note that $f$ satisfies a weak-strong uniqueness criterion proved by Lions \cite{Lions94}. Since 
		\begin{align*}
			\norm{\Phi*f(t)}_{L^\infty_{x,v}}\le\norm{\Phi}_{L^\infty}\norm{f}_{L^\infty_x L^1_v}
			\le\norm{\Phi}_{L^\infty}\norm{f}_X
		\end{align*}
		and since $\norm{|v| f(t)}_{L^1_{x,v}}$ is uniformly bounded for $t\in [0,T]$, we see that the conditions of Theorem IV.1 and V.1 of \cite{Lions94} are satisfied. Therefore, there exists a renormalized solution $g\in C([0,\infty),L^1_{x,v})$ with initial data $f_{\mrm{in}}$ which coincides with $f$ on the time interval $[0,T]$. By construction, this $g$ satisfies the entropy inequality \eqref{entropy_inequality} for all time $t\in[0,\infty)$, which proves the result for $f$.
		
	\end{proof}

	\section{Global Solutions Near Maxwellians} \label{Thm1_Proof_Section}
	
	In this section we present a proof of Theorem \ref{Thm1}. This result is found in \cite{BiryukCraigPanferov} and is not new, but we hope that a complete proof can be useful for the literature. Our method is to use the angular averaging estimate from Proposition \ref{AAlemv2} to get stronger control on the growth of $\norm{f(t)}_X$. We do this by bounding the term $\norm{S_{t-s}Q^+(f,f)(s)}_X$ using entropy estimates. By combining these estimates with the bilinear estimate $\norm{S_{t-s}Q^+(f,f)(s)}_X\le C\norm{f(s)}_X^2$, we will be able to close an estimate for the norm $\norm{f(t)}_X$ and show that it remains bounded for finite time. Using the blowup criterion from Lemma \ref{local_lemma}, this will prove that our solutions are global in time.
	
		We now fix our assumptions for this section. We take initial data $f_{\mrm{in}}\in X\cap L^1(1+|v|^2)\cap L\log L(\TT^1_x\times\RR^3_v)$, and let $f\in L^\infty_{\mrm{loc}}([0,T_*),X)$ be the unique solution provided by Lemma \ref{local_lemma}. We define the mass
	\begin{align*}
		m = \int_{\TT_x^1}\!\int_{\RR_v^3}\! f(t,x,v)\dd v \dd x
	\end{align*}
	which is constant in time. Using Proposition \ref{AAlemv2}, we can bound
	\begin{align}
		&\norm{S_q Q^+(f,f)}_X \nonumber
		\\
		&\le
		4\pi
		\sup_{s\ge q}\esssup_{x\in\TT^1} 
		\int_{\RR_v^3}\!\int_{\RR_{v_*}^3}\!
		\int_{(s/2)(v_1 + v_{*,1} -|v-v_*|)}^{(s/2)(v_1 + v_{*,1} +|v-v_*|)}
		F(x-y,v,v_*)\frac{\widetilde B(|v-v_*|)}{s|v-v_*|}\dd y \dd v_*\dd v  \nonumber
		\\
		&\le
		4\pi \sup_{s\ge q}\esssup_{x\in\TT^1} 
		\int_{\RR_v^3}\!\int_{\RR_{v_*}^3}\!
		\int_{\TT^1}\lceil s|v-v_*|\rceil
		F(y,v,v_*)\frac{\widetilde B(|v-v_*|)}{s|v-v_*|}\dd y \dd v_*\dd v \nonumber
		\\
		&=
		4\pi
		\sup_{s\ge q} \int_{\RR_v^3}\!\int_{\RR_{v_*}^3}\!
		\int_{\TT_x^1}
		f(x,v)f(x,v_*)\widetilde B(|v-v_*|)\frac{\lceil s|v-v_*|\rceil}{s|v-v_*|}\dd x \dd v_*\dd v  \nonumber
		\\
		&\le
		4\pi
		\sup_{s\ge q} \int_{\RR_v^3}\!\int_{\RR_{v_*}^3}\!
		\int_{\TT_x^1}
		f(x,v)f(x,v_*)\widetilde B(|v-v_*|)
			\left(
				1 + \frac{1}{s|v-v_*|}
			\right)
		\dd x \dd v_*\dd v  \nonumber
		\\
		&\le
		4\pi
		\int_{\RR_v^3}\!\int_{\RR_{v_*}^3}\!
		\int_{\TT_x^1}
		f(x,v)f(x,v_*)\widetilde B(|v-v_*|)
		\left(
		1 + \frac{1}{q|v-v_*|}
		\right)
		\dd x \dd v_*\dd v,
		\label{second_bound}
	\end{align}
	where we have set $F(x,v,v_*) = f(x,v)f(x,v_*)$, and where $\lceil\cdot\rceil$ denotes the ceiling function. Using hypothesis \textbf{(H1)}, we can bound
	\begin{align*}
		\widetilde B(|v-v_*|)
		\left(
		1 + \frac{1}{q|v-v_*|}
		\right)
		\le
		\norm{(1+r^{-1})B(r,\cos\theta)}_{L^\infty_{r,\theta}}
		\left(
		1 + \frac{1}{q}
		\right),
	\end{align*}
	which allows us to bound \eqref{second_bound} by
	\begin{align}
		&4\pi \norm{(1+r^{-1})B}_{L^\infty_{r,\theta}}
		\left(
			1 + \frac{1}{q}
		\right)
		\int_{\RR_v^3}\!\int_{\RR_{v_*}^3}\!\int_{\TT_x^1}	f(x,v)f(x,v_*) \dd x \dd v_*\dd v \nonumber
		\\
		&=
		4\pi \norm{(1+r^{-1})B}_{L^\infty_{r,\theta}}
		\left(
		1 + \frac{1}{q}
		\right)
		\int_{\TT_x^1}\!\rho(x)^2\dd x,
		\label{third_bound}
	\end{align}
	where we define $\rho(x) = \int_{\RR_v^3}\! f(x,v)\dd v$. We can then estimate this using the following lemma.
	
	\begin{lemma}\label{rho_L2_lemma}
		Suppose we have $0\le f\in X$ on $\TT_x^1\times\RR_v^3$ with mass $m$ and density $\rho$, and let $M(v)$ be a Maxwellian of mass $m$. Take arbitrary $\epsilon>0$. Then we have the bound
		\begin{align*}
			&\int_{\TT^1}\!\rho(x)^2\dd x 
			\le
			\left(
				\epsilon m + H(f|M) + \sqrt{\frac{m}{2}H(f|M)}
			\right)
			\max\left(
				\frac{m}{\epsilon},\ 
				\frac{\norm{f}_X}{\log^+(m^{-1}\norm{f}_X)+\epsilon}
			\right).
		\end{align*}
	\end{lemma}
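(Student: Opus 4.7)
The plan is a threshold decomposition for $\rho$, combined with two entropy-based bounds on its tail. Let $\bar\rho = m/|\TT^1|$ denote the (constant) spatial density of the mass-$m$ Maxwellian $M$. Recall that $\|\rho\|_{L^\infty_x}\le\|f\|_X$ by definition of the $X$-norm, and that $H(\rho|\bar\rho)\le H(f|M)$ by convexity of relative entropy (the marginal of $M$ in $x$ being constant). For any threshold $A>0$ to be chosen, we split
\begin{align*}
	\int_{\TT^1}\!\rho^2\dd x
	\le A\int_{\TT^1}\!\rho\dd x + \|\rho\|_{L^\infty_x}\!\int_{\{\rho>A\}}\!\rho\dd x
	\le A\, m + \|f\|_X\!\int_{\{\rho>A\}}\!\rho\dd x.
\end{align*}

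To control $\int_{\{\rho>A\}}\rho\dd x$, the plan is to combine two complementary bounds. The Csisz\'ar--Kullback--Pinsker inequality gives $\|\rho-\bar\rho\|_{L^1}\le\sqrt{2mH(\rho|\bar\rho)}$, so
$\int_{\{\rho>A\}}\rho\dd x \le \sqrt{2mH(f|M)} + \bar\rho\,|\{\rho>A\}|$, which will be responsible for the $\sqrt{mH/2}$ contribution in the final bound. Simultaneously, for $A>\bar\rho$ the integrand $\rho\log(\rho/\bar\rho)$ is non-negative on $\{\rho>A\}$, and the pointwise lower bound $r\log(r/\bar\rho)\ge-\bar\rho/e$ (i.e. $x\log x\ge-1/e$) yields
\begin{align*}
	\int_{\{\rho>A\}}\!\rho\dd x
	\le \frac{1}{\log(A/\bar\rho)}\int_{\{\rho>A\}}\!\rho\log(\rho/\bar\rho)\dd x
	\le \frac{H(f|M)+m/e}{\log(A/\bar\rho)},
\end{align*}
which supplies the $H$ contribution together with the key logarithmic gain.

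The $\max$ structure in the statement corresponds to two natural choices of $A$: a trivial threshold $A=m/\epsilon$ (under which $Am=m^2/\epsilon$ is absorbed into $\epsilon m\cdot m/\epsilon$ and the tail contribution is bounded by $\|f\|_X\sqrt{2mH}$), and a refined threshold $A\sim\|f\|_X/(\log^+(\|f\|_X/m)+\epsilon)$, which activates the logarithmic denominator via the entropy-localization estimate above. Taking the better of the two bounds, and packaging the $\sqrt{2mH}$, $H$, and constant-mass contributions into the factor $\epsilon m + H + \sqrt{mH/2}$, yields the stated product bound. The role of $\epsilon$ is twofold: it regularizes $\log^+(A/\bar\rho)$ in the regime $\|f\|_X\lesssim\bar\rho$ where the logarithm vanishes, and it parametrizes the trade-off between the two branches of the $\max$.

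The principal technical obstacle is not any single estimate --- each of CKP, the entropy localization bound, and the pointwise lower bound on $r\log r$ is elementary --- but rather the \emph{assembly}: one must verify that after choosing a single $A$ for each regime, the resulting inequality genuinely factors as the advertised product of $\epsilon m + H + \sqrt{mH/2}$ with the $\max$-expression, uniformly across the transition $\|f\|_X/m \approx e^\epsilon$. This requires some bookkeeping of the lower-order terms $\bar\rho m$ and $m/e$, and a careful algebraic rearrangement, but no further analytic input.
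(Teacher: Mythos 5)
The paper proves this lemma with a pointwise H\"older--type factorization: it writes $\rho^2 = \rho\,\bigl(\epsilon+\log^+(\rho/m)\bigr)\cdot\dfrac{\rho}{\epsilon+\log^+(\rho/m)}$, bounds the first factor's integral by $\epsilon m+H^+(\rho|m)$, and bounds the second factor's $L^\infty$ norm by the $\max$--expression, using that $\vphi\mapsto\max\bigl(m/\epsilon,\ \vphi/(\log^+(\vphi/m)+\epsilon)\bigr)$ is nondecreasing and dominates $\vphi/(\epsilon+\log^+(\vphi/m))$. Pinsker's inequality is then used to convert $H^+$ to $H+\sqrt{mH/2}$, and the entropy chain rule gives $H(\rho|m)\le H(f|M)$. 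The product structure of the advertised bound is produced \emph{directly} by this pointwise factorization.

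Your approach is structurally different --- a threshold decomposition $\int\rho^2\le Am+\norm{f}_X\int_{\{\rho>A\}}\rho$ --- and this is where the gap lies. The decomposition produces an \emph{additive} bound of the form $Am+\norm{f}_X\cdot(\text{tail})$, and no choice of a single threshold $A$ makes this sum factor as the advertised product $\bigl(\epsilon m+H+\sqrt{mH/2}\bigr)\max(\dots)$. Concretely: with your ``trivial'' threshold $A=m/\epsilon$, the bulk term is $Am=m^2/\epsilon$, which is \emph{not} absorbed into $\epsilon m\cdot(m/\epsilon)=m^2$ when $\epsilon<1$ --- this is off by a factor $1/\epsilon$, not a ``bookkeeping'' constant. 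To make $Am\le\epsilon m\cdot(m/\epsilon)$ one must take $A\le m=\bar\rho$, but $A=\bar\rho$ is precisely where your entropy--localization estimate degenerates ($\log(A/\bar\rho)=0$) and where the tail $\{\rho>A\}$ can be all of $\TT^1$, so $\norm{f}_X\int_{\{\rho>A\}}\rho$ contributes $\norm{f}_X m$ rather than anything entropy--small. Moreover, in the branch where $\max=m/\epsilon$, your tail contribution carries a prefactor of $\norm{f}_X$, whereas the lemma's bound carries only $m/\epsilon$; one can easily have $\norm{f}_X>m/\epsilon$ in that branch (take $m=1$, $\epsilon=0.1$, $\norm{f}_X=40$, which still has $\norm{f}_X/(\log\norm{f}_X+\epsilon)<10=m/\epsilon$), so the term $\norm{f}_X\sqrt{mH/2}$ is genuinely larger than the target $\sqrt{mH/2}\,\cdot m/\epsilon$.

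In short, the individual ingredients you invoke (CKP/Pinsker, entropy localization, $x\log x\ge -1/e$, $H(\rho|m)\le H(f|M)$) are all sound, and two of them (Pinsker and the entropy chain rule) appear in the paper's proof as well. But the assembly you defer to ``careful algebraic rearrangement'' is not a matter of tracking lower--order constants --- the additive structure of the threshold method resists the multiplicative form of the conclusion. To repair it you would have to replace the global $\norm{f}_X$ prefactor by a pointwise bound on $\rho$ weighted against $\log^+(\rho/m)$, at which point you would essentially be rediscovering the paper's H\"older factorization.
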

	\begin{proof}
		We first estimate
			\begin{align}
			\int_{\TT^1}\!\rho(x)^2\dd x
			&\le
			\int_{\TT^1}\! \rho(x)\left[ \epsilon + \log^+\frac{\rho(x)}{m}\right]\mathrm{d} x \ 
				\norm{\frac{\rho}{\epsilon + \log^+(m^{-1}\rho)}}_{L^\infty(\TT^1)} \nonumber
			\\
			&= \left(
				m\epsilon + H^+(\rho|m)
			\right)
			\norm{\frac{\rho}{\epsilon + \log^+(m^{-1}\rho)}}_{L^\infty(\TT^1)} \nonumber
			\\
			\begin{split}
			&\le
			\left(
				m\epsilon + H(\rho|m) + H^-(\rho|m)
			\right) 
			\\
			&\qquad\qquad\qquad\qquad\cdot
			\max\left(
				\frac{m}{\epsilon},\ 
				\frac{\norm{\rho}_{L^\infty}}{\epsilon+\log^+(m^{-1}\norm{\rho}_{L^\infty})}
			\right)
			\label{bound_on_rho_L2}
			\end{split}
		\end{align}
	where we have defined
	\begin{align*}
		H^\pm(\rho|m) = \int_{\TT^1}\!\rho(x)\log^\pm\frac{\rho(x)}{m}\dd x,
	\end{align*}
	and where in the second inequality we have used that the function
	\begin{align}\label{nondec_func_of_phi}
		\varphi\mapsto
		\max\left(
			\frac{m}{\epsilon},\ 
			\frac{\varphi}{\log^+(m^{-1}\varphi)+\epsilon}
		\right),
		\qquad m\le \varphi<\infty
	\end{align}
	is non-decreasing and therefore commutes with suprema. Since $m = \norm{\rho}_{L^1(\TT^1)}\le\norm{\rho}_{L^\infty(\TT^1)}$, the bound \eqref{bound_on_rho_L2} is justified. We can then control the negative part of the relative entropy as
	\begin{align*}
		H^-(\rho|m)
		&=
		\int_{\TT^1}\!\rho\log^-\frac{\rho}{m}\dd x
		\\
		&\le
		m\int \left( 1- \frac{\rho}{m}\right)_+ \dd x
		\le
		m\sqrt{\frac12 H(m^{-1}\rho)}
		=
		\sqrt{\frac{m}{2} H(\rho|m)}
	\end{align*}
	where we define $(\cdot)_+ = \max(\cdot,0)$ and where we have used Pinsker's inequality in the last inequality. Applying this bound to \eqref{bound_on_rho_L2} gives us the estimate
	\begin{align}
		&\int_{\TT^1}\!\rho(x)^2\dd x \nonumber
		\\
		&\le
		\left(
			m\epsilon + H(\rho|m) + \sqrt{\frac{m}{2}H(\rho|m)}
		\right)
		\max\left(
			\frac{m}{\epsilon},\ 
			\frac{\norm{\rho}_{L^\infty}}{\log^+(m^{-1}\norm{\rho}_{L^\infty})+\epsilon}
		\right).
		\label{second_bound_on_rho_L2}
	\end{align}
	We now apply the chain rule for relative entropy, to get the bound
	\begin{align*}
		H(f|M)
		&=
		\int_{\TT_x^1\times\RR_v^3}\! f(x,v)\log\frac{f(x,v)}{M(v)}\dd v\dd x \\
		&=
		\int_{\TT_x^1}\!\rho(x)\int_{\RR_v^3}\!\frac{f(x,v)}{\rho(x)}
		\left[
			\log\frac{f(x,v) m}{\rho(x) M(v)} + \log\frac{\rho(x)}{m}
		\right] \dd v\dd x 
		\\
		&=
		\int_{\TT_x^1}\!\rho(x)\int_{\RR_v^3}\! \rho(x)^{-1}f(x,v)
			\log\frac{\rho(x)^{-1}f(x,v) }{m^{-1} M(v)}
			\dd v\dd x
			+
		\int_{\TT_x^1}\!\rho(x)\log\frac{\rho(x)}{m}\dd x \\
		&\ge
		\int_{\TT_x^1}\! \rho(x)\log\frac{\rho(x)}{m}\dd x = H(\rho|m).
	\end{align*}
	Applying this to \eqref{second_bound_on_rho_L2}, we can bound
	\begin{align*}
		&\int_{\TT^1}\!\rho(x)^2\dd x 
		\\
		&\le
		\left(
			\epsilon m + H(f|M) + \sqrt{\frac{m}{2}H(f|M)}
		\right)
		\max\left(
			\frac{m}{\epsilon},\ 
			\frac{\norm{\rho}_{L^\infty}}{\log^+(m^{-1}\norm{\rho}_{L^\infty})+\epsilon}
		\right) 
		\\
		&\le
		\left(
			\epsilon m + H(f|M) + \sqrt{\frac{m}{2}H(f|M)}
		\right)
		\max\left(
			\frac{m}{\epsilon},\ 
			\frac{\norm{f}_{X}}{\log^+(m^{-1}\norm{f}_{X})+\epsilon}
		\right)
	\end{align*}
	where we have used the monotonicity of \eqref{nondec_func_of_phi} and the fact that $\norm{\rho}_{L^\infty}\le\norm{f}_X$ in the final inequality, and this concludes the proof.
	\end{proof}

	\begin{lemma} \label{small_entropy_lemma}
		Let $f$ and $M$ be as in Lemma \ref{rho_L2_lemma}, and suppose 
		\begin{align*}
			H(f|M)\le  \frac{1}{4C + 2C^2 m}
		\end{align*}
		where $C = 2\pi\norm{(1+r^{-1})B}_{L^\infty_{r,\theta}}$. Then there exists some $\alpha<1$ and $\epsilon>0$, such that we have the bound
		\begin{align*}
			\norm{S_q Q^+(f,f)}_X\le\alpha\left(1+\frac1q\right) 
			\max\left(
			\frac{m}{\epsilon},\ 
			\frac{\norm{f}_{X}}{\log^+(m^{-1}\norm{f}_{X})+\epsilon}
			\right),
		\end{align*}
	 where $\alpha,\epsilon$ depend on $\norm{(1+r^{-1})B}_{L^\infty_{r,\theta}}$ and $m$.
	\end{lemma}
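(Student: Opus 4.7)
The plan is to combine the chain of bounds culminating in \eqref{third_bound} with Lemma \ref{rho_L2_lemma}, and then to exploit the smallness of $H(f|M)$ to ensure that the resulting multiplicative constant is strictly less than one. Since $4\pi\norm{(1+r^{-1})B}_{L^\infty_{r,\theta}} = 2C$ under the stated definition of $C$, the bound \eqref{third_bound} can be rewritten as
\[
\norm{S_q Q^+(f,f)}_X \le 2C\left(1+\tfrac{1}{q}\right)\int_{\TT^1}\!\rho(x)^2\,\mrm{d}x.
\]
I then substitute the estimate of Lemma \ref{rho_L2_lemma} for $\int\rho^2$, which reproduces the desired $\max(\cdot,\cdot)$ factor on the right-hand side, multiplied by a prefactor
\[
\alpha \;=\; 2C\left(\epsilon m + H(f|M) + \sqrt{\tfrac{m}{2}H(f|M)}\right).
\]

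The remaining task is to verify that $\epsilon>0$ can be chosen (depending only on $C$ and $m$) so that $\alpha<1$. First I discard the $\epsilon m$ contribution and check that the two entropy-dependent terms together are strictly less than $1$. Using the hypothesis $H(f|M)\le (4C+2C^2m)^{-1}$, direct substitution together with the factorization $4C+2C^2m = 2C(2+Cm)$ gives
\[
2C\cdot H(f|M)\le \frac{1}{2+Cm}, \qquad 2C\sqrt{\tfrac{m}{2}H(f|M)} \le \sqrt{\tfrac{Cm}{2+Cm}}.
\]
Setting $u = Cm/(2+Cm)\in [0,1)$, the sum of these two terms equals $(1-u)/2 + \sqrt{u}$, and the elementary inequality $(1-\sqrt{u})^2>0$ rearranges to $(1-u)/2+\sqrt{u}<1$. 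This leaves strictly positive slack, so one can now choose $\epsilon>0$ small enough, depending only on $C$ and $m$, to absorb the $2C\epsilon m$ term while keeping $\alpha<1$; this $\epsilon$ can be used for both the prefactor and the $\max(\cdot,\cdot)$ expression.

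The only point requiring care is the bookkeeping to confirm that the hypothesis delivers exactly the right scaling so that the two contributions from $H^+$ and from Pinsker's inequality combine to a value strictly below criticality. Once the substitution $u = Cm/(2+Cm)$ is made, the subcriticality reduces to the trivial $(1-\sqrt{u})^2>0$, and no further nontrivial estimates are required; the choice of $\epsilon$ is then a routine quantitative matter.
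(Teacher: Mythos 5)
Your proposal is correct and follows essentially the same strategy as the paper: substitute the bound from Lemma \ref{rho_L2_lemma} into \eqref{third_bound}, reduce to checking that the prefactor $2C\bigl(\epsilon m + H(f|M) + \sqrt{\tfrac{m}{2}H(f|M)}\bigr)$ is strictly below $1$, and then shrink $\epsilon$ to absorb the $2C\epsilon m$ term into the slack. The only difference is cosmetic: where the paper introduces $\varphi(h) = h + \sqrt{mh/2}$, computes $\varphi^{-1}$ via the quadratic formula, and bounds $\varphi^{-1}(1/2C)$ from below by a Taylor-type estimate, you substitute the hypothesis directly and reduce the subcriticality to $(1-\sqrt{u})^2>0$ with $u = Cm/(2+Cm)$ — an equivalent and arguably cleaner verification of the same inequality.
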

	\begin{proof}
		By applying Lemma \ref{rho_L2_lemma} to the bound \eqref{third_bound}, we get
		 \begin{align*}
		 	&\norm{S_q Q^+(f,f)}_X \nonumber
		 	 \\
		 	&\qquad\le
		 	4\pi\norm{(1+r^{-1})B}_{L^\infty_{r,\theta}}
		 		\left(
		 			\epsilon m + H(f|M) + \sqrt{\frac{m}{2}H(f|M)}
		 		\right)
		 	\\
		 	&\qquad\qquad\qquad\qquad\qquad\qquad\qquad\cdot
		 	\left(1 + \frac1q\right)
		 	\max\left(
		 		\frac{m}{\epsilon},\ 
		 		\frac{\norm{f}_X}{\log(m^{-1}\norm{f}_X)+\epsilon}
		 	\right) \nonumber \\
		 	&\qquad=
		 	2C
			 	\left(
				 	\epsilon m + H(f|M) + \sqrt{\frac{m}{2}H(f|M)}
		 	\right)
			 	\\
		 	&\qquad\qquad\qquad\qquad\qquad\qquad\qquad\cdot
		 	\left(1 + \frac1q\right)
			\max\left(
			 	\frac{m}{\epsilon},\ 
		 		\frac{\norm{f}_X}{\log(m^{-1}\norm{f}_X)+\epsilon}
		 	\right) \nonumber 
		 \end{align*}
	 so it suffices to show that
	\begin{align}\label{bound_by_K(m,C)}
		2C 
		\left(
			H(f|M)+\sqrt{\frac{m}{2}H(f|M)}
		\right) 
		\le K(m,C)<1
	\end{align}
	for a suitable $K(m,C)$, since we could then set
	\begin{align*}
		\epsilon = \frac{1 - K(m,C)}{4Cm}
	\end{align*}
	and let $\alpha = K(m,C) + \epsilon m<1$, and this would prove the claim. We define the function
	\begin{align*}
		\varphi(h) = h + \sqrt{\frac{m}{2} h},
	\end{align*}
	which is an increasing function for $h\ge 0$. Using the quadratic formula, we can compute its inverse to be
	\begin{align}
	\begin{split}
		\varphi^{-1}(x) &= x + \frac{m}{4} - \sqrt{\left(x + \frac{m}{4}\right)^2-x^2} \\
		&>
		\frac{1}{2\left(x + m/4\right)}x^2
	\label{phi_inverse_estimate}
	\end{split}
	\end{align}
	where the strict inequality holds for any $x>0$, by Taylor expansion. Setting $x = 1/2C$ in \eqref{phi_inverse_estimate}, we get the bound
	\begin{align*}
		\varphi^{-1}\left(\frac{1}{2C}\right)
		>
		\frac{1}{4C + 2C^2 m},
	\end{align*}
	and therefore, by strict monotonicity of $\varphi$ we get
	\begin{align*}
		K(m,C):=  2C \varphi\left( \frac{1}{4C + 2C^2 m} \right) < 1.
	\end{align*}
	By monotonicity of $\varphi$, this also shows that whenever
	\begin{align*}
		H(f|M)\le  \frac{1}{4C + 2C^2 m},
	\end{align*}
	we have the bound $2C\varphi(H(f|M))\le K(m,C)$, which proves \eqref{bound_by_K(m,C)}, which proves the claim.

	\end{proof}

	We can now combine the estimates from Lemmas \ref{ContEstLemv2} and \ref{small_entropy_lemma} to get the bound
	\begin{align}
	\begin{split}
	\label{small_entropy_SqQ+_control}
		&\norm{S_qQ^+(f,f)}_X \\
		&\le
		\min\left(
			8\pi\norm{\phi}_{L^1}\norm{f}_X^2,\,
			\alpha\left(1+\frac1q\right) 
			\max\left(
				\frac{m}{\epsilon},\ 
				\frac{\norm{f}_{X}}{\log^+(m^{-1}\norm{f}_{X})+\epsilon}
			\right)
		\right)
	\end{split}
	\end{align}
	for some $\alpha<1$ and $\epsilon>0$, whenever 
	\begin{align*}
		H(f|M) \le \frac{1}{4C + 2C^2 m}
	\end{align*}
	holds. If $f\in L^\infty_{\mrm{loc}}([0,T_*),X)$ is a solution to \eqref{1D_Boltzmann} from Lemma \ref{local_lemma}, and if $f_{\mrm{in}}\in L^1(1+|v|^2)\cap L\log L(\TT_x^1\times\RR_v^3)$, then by Proposition \ref{regularity_proposition}, we have that $H(f(t)|M)$ is non-increasing in time and therefore \eqref{small_entropy_SqQ+_control} holds for all time $t\in [0,T_*)$. By applying this bound to the Duhamel formulation \eqref{Duhamel_Boltzmann} we get the estimate
	\begin{align}
		&\norm{f(t)}_X \nonumber
		\\
		&= \sup_{q\ge 0}\esssup_{x\in\TT_x^1} \int_{\RR_v^3}\! S_q f(t,x,v)\dd v \nonumber
		\\
		&\le \norm{S_t f_{\mrm{in}}}_X
		+
		\sup_{q\ge 0}\esssup_{x\in\TT_x^1} \int_{\RR_v^3}\!
			S_q \int_0^t\! S_{t-s}[Q^+(f,f)(s) - Q^-(f,f)(s)]\dd s \dd v  \nonumber
			\\
		&\le
		\norm{f_{\mrm{in}}}_X
		+
		\int_0^t\! \norm{S_{t-s}Q^+(f,f)(s)}_X\dd s \nonumber
		\\
		\begin{split}
		&\le
		\norm{f_{\mrm{in}}}_X 
		\\
		&\quad+
		\int_0^t\!
		\min\left(
			c_1\norm{f}_X^2,\,
			\alpha\left(1+\frac{1}{t-s}\right) 
			\max\left(
				\frac{m}{\epsilon},\ 
				\frac{\norm{f}_{X}}{\log^+(m^{-1}\norm{f}_{X})+\epsilon}
			\right)
		\right)
		\dd s
		\label{norm(f)_X_small_entropy_bound}
		\end{split}
	\end{align}	
	for $c_1 = c(B)$. In the first equality we have used non-negativity $f\ge 0$  to discard the loss term $Q^-$, and then we have used that $\norm{S_tf_{\mrm{in}}}_X$ is non-increasing in  $t$. We can now close this estimate, using a Gronwall-type result which was stated without proof in \cite{BiryukCraigPanferov}.

		\begin{lemma}[Adapted from \cite{BiryukCraigPanferov}]\label{small_entropy_ODE}
			 Let $\varphi\in L^\infty_{\mrm{loc}}([0,T_*))$ satisfy the bound $\varphi\ge m$ and the integral inequality
		\begin{align}\label{entropy_int_ineq}
			\varphi(t)\le
			c_0
			+
			\int_0^t\!
			\min\left\{
				c_1 \varphi(s)^2,
				\left( 
					\frac{\alpha}{t-s} + c_2
				\right)
				\max\left(
					\frac{m}{\epsilon},\ 
					\frac{\varphi(s)}{\log(m^{-1}\varphi(s))+\epsilon}
				\right)
			\right\}
			\dd s 
		\end{align}
		for $c_0,c_1,c_2,c_3,\alpha,\epsilon>0$ constants. Suppose $\alpha<1$. Then we have the bound
		\begin{align*}
			\varphi(t)\le e^{C\sqrt{1+t}}
		\end{align*}
		for $C = C(c_0,c_1,c_2,\epsilon,m,\alpha)$, and this constant blows up as $\alpha\to 1$.
			
		\end{lemma}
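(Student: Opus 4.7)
My plan is to prove the bound $\varphi(t) \le \Psi(t) := e^{C\sqrt{1+t}}$ on $[0, T_*)$ by a continuity/bootstrap argument, choosing $C$ sufficiently large in terms of $c_0, c_1, c_2, \epsilon, m, \alpha$. Writing $F(\varphi) := \max(m/\epsilon,\ \varphi/(\log(m^{-1}\varphi) + \epsilon))$ for brevity, the key observation is that the $\min$ inside \eqref{entropy_int_ineq} lets us split the integral at any $t - \delta(t)$: we use the quadratic branch on $[t-\delta, t]$, where the singular kernel $\alpha/(t-s)$ would blow up, and the sublinear branch on $[0, t-\delta]$, where $F$ provides a logarithmic gain.

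Assuming inductively that $\varphi(s) \le \Psi(s)$ on $[0, t]$, I would bound the right-hand side of \eqref{entropy_int_ineq} by $c_0 + I_1 + I_2$, with $I_1 \le c_1 \Psi(t)^2 \delta$ and, by monotonicity of $F$,
\begin{equation*}
I_2 \le \alpha F(\Psi(t))\,\log(t/\delta) + c_2 \int_0^t\! F(\Psi(s))\,\dd s.
\end{equation*}
The choice $\delta = t (m/\Psi(t))^\beta$ with $\beta \in (1,\,1/\alpha)$ (nonempty since $\alpha < 1$) yields $\log(t/\delta) = \beta \log(\Psi(t)/m)$, which exactly cancels the logarithm in $F(\Psi(t)) \sim \Psi(t)/\log(\Psi(t)/m)$ to give $\alpha F(\Psi(t))\log(t/\delta) \le \alpha\beta\, \Psi(t)$. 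The boundary term $I_1 \le c_1 t\, m^\beta \Psi(t)^{2-\beta}$ is $o(\Psi(t))$ because $\beta > 1$ and $\Psi$ grows at least exponentially. The remaining integral can be evaluated by the substitution $u = C\sqrt{1+s}$, converting it into an exponential integral bounded by $\sim 2\Psi(t)/C^2$.

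Collecting, we get $\varphi(t) \le c_0 + o(\Psi(t)) + \alpha\beta\, \Psi(t) + (2c_2/C^2)\,\Psi(t)$. Taking $\beta = (1 + 1/\alpha)/2$ so that $\alpha\beta = (1+\alpha)/2 < 1$, and then $C$ large enough that $C^2 > 4c_2/(1-\alpha)$ with some margin, yields strict inequality $\varphi(t) < \Psi(t)$. A standard continuity argument --- using that the right-hand side of \eqref{entropy_int_ineq} is continuous in $t$, so $\varphi$ is a.e.\ dominated by a continuous function --- then extends the bound to all of $[0, T_*)$. The requirement $C = O(1/\sqrt{1-\alpha})$ as $\alpha \to 1$ matches the stated blow-up.

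The hardest part is the sharp balance between the non-integrable singular kernel $\alpha/(t-s)$ and the sublinear growth of $F$: the only reason the integral closes at all is the logarithmic gain $F(\varphi) \sim \varphi/\log\varphi$, and the splitting point $\delta$ must be tuned so that $\log(t/\delta)$ absorbs $\log\Psi$ up to a factor strictly less than $1/\alpha$. The choice of exponent $\sqrt{1+t}$ is itself forced by the term $c_2 \int_0^t F(\Psi(s))\,\dd s$: $\Psi$ must grow fast enough for this integral to be $O(\Psi(t))$, yet slowly enough that the prefactor $2c_2/C^2$ makes it a small fraction of $\Psi(t)$; the square-root is the sharp scaling that makes both constraints consistent with a Gronwall-type closure.
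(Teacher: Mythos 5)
Your proof is correct in substance, but it takes a genuinely different route from the paper's. Both arguments split the integral at $t-\delta$ and exploit the same structural cancellation — the logarithm in $F(\varphi)=\max(m/\epsilon,\,\varphi/(\log(\varphi/m)+\epsilon))$ absorbs the logarithmic divergence $\log(t/\delta)$ of the singular kernel — but they differ in how they close. You posit the explicit ansatz $\Psi(t)=e^{C\sqrt{1+t}}$ from the outset and verify it by a bootstrap, choosing $\delta = t(m/\Psi(t))^\beta$ with $\beta\in(1,1/\alpha)$ so that $\log(t/\delta)=\beta\log(\Psi(t)/m)$ cancels exactly, at the cost of an extra factor of $t$ in the boundary term $I_1 = c_1 t\,m^\beta\Psi(t)^{2-\beta}$ (which you correctly dismiss as $o(\Psi(t))$ uniformly for $C$ large). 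The paper instead works with the running maximum $\varphi_*(t)=\max\bigl(\sup_{[0,t]}\varphi,\,K(1+t)^p\bigr)$ — the polynomial floor $K(1+t)^p$ with $p=2\alpha/(1-\alpha)$ is what tames the spurious $\log t$ it picks up — and uses $\delta = m/(\varphi_*\log\varphi_*)$, which has no factor of $t$. This reduces \eqref{entropy_int_ineq} to a clean integral inequality $\varphi_*(t)\le C_0' + C_2'\int_0^t \varphi_*(s)/\log(m^{-1}\varphi_*(s))\,\dd s$, solved by separation of variables. Your approach is shorter once the ansatz is known, and arguably makes the provenance of $C\sim(1-\alpha)^{-1/2}$ and of the $\sqrt{1+t}$ exponent more transparent; the paper's approach is more robust in that it discovers the growth rate rather than guessing it, and avoids the delicate uniform-in-$t$ smallness check on the boundary term. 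Both leave the same minor technicality implicit — $\varphi$ is only $L^\infty_{\mrm{loc}}$, so the continuity/bootstrap step really runs on the right-hand side of \eqref{entropy_int_ineq}, which dominates $\varphi$ a.e., is continuous by dominated convergence, and inherits the integral inequality by monotonicity of $F$; you flag this, the paper does not.
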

	
		\begin{proof}
			
			Our proof proceeds by splitting the integral in \eqref{entropy_int_ineq} at $t-\delta$ for some $\delta$, and by bounding the two terms of the minimum separately. We first define 
			\begin{align*}
				\varphi_*(t) = \max\Big( \sup_{s\in [0,t]}\varphi(s),\: K\, (1+t)^p \Big),
			\end{align*}
			where $K,p>0$ will be chosen later, and $K$ will be taken sufficiently large. We  then define
			\begin{align*}
				\delta = m/\varphi_*(t)\log\varphi_*(t).
			\end{align*}
		We can use \eqref{entropy_int_ineq} to bound
		\begin{align*}
			\varphi(t)
			&\le
			c_0
			+
			\int_{t-\delta}^t c_1\varphi(s)^2 \dd s
			+
			\int_0^{t-\delta}\!
			\left( 
			 \frac{\alpha}{t-s} + c_2
			\right)
			\max\left(\frac{m}{\epsilon},\ 
				\frac{\varphi(s)}{\log(m^{-1}\varphi(s))+\epsilon}
			\right)
			\dd s
			\\
			&\le
			c_0+\delta c_1\varphi_*(t)^2
			+
			\int_0^{t-\delta}\!
				\frac{\alpha}{t-s}
				\max\left(\frac{m}{\epsilon},\ 
					\frac{\varphi(s)}{\log(m^{-1}\varphi(s))+\epsilon}
				\right)
				\dd s
			\\
			&\qquad\qquad\qquad\qquad\qquad\qquad+
			c_2 \int_0^t\! 
				 \max\left(\frac{m}{\epsilon},\ 
					 \frac{\varphi(s)}{\log(m^{-1}\varphi(s))+\epsilon}
				 \right)
				\dd s
			\\
			&\le
			c_0 + \frac{c_1 m}{\log\varphi_*(t)}\varphi_*(t) \\
			&\qquad\qquad\qquad
			+ \alpha\log(t\delta^{-1})\sup_{[0,t]}
				\max\left(\frac{m}{\epsilon},\ 
					\frac{\varphi(s)}{\log(m^{-1}\varphi(s))+\epsilon}
				\right)
			\\
			&\qquad\qquad\qquad\qquad\qquad\qquad
			+
			c_2\int_0^t\! 
			\max\left(\frac{m}{\epsilon},\ 
				\frac{\varphi(s)}{\log(m^{-1}\varphi(s))+\epsilon}
			\right)
			\dd s
			\\
			&\le
			c_0 + \frac{c_1 m}{\log\varphi_*(t)}\varphi_*(t) \\
			&\qquad\qquad\qquad
			+
			\alpha\log(t m^{-1}\varphi_*(t)\log\varphi_*(t))
				\max\left(
					\frac{m}{\epsilon},\ 
					\frac{\varphi_*(t)}{\log(m^{-1}\varphi_*(t))+\epsilon} 
				\right)
			\\
			&\qquad\qquad\qquad\qquad\qquad
			+
			c_2\int_0^t\! 
			 \max\left(\frac{m}{\epsilon},\ 
				 \frac{\varphi(s)}{\log(m^{-1}\varphi(s))+\epsilon}
			 \right)
			\dd s.
		\end{align*}
	
	In the last inequality, we have used the fact that the function
	\begin{align*}
		[m,\infty)\mapsto\RR_{\ge0},
		\quad
		\varphi
		\mapsto
		\max\left(\frac{m}{\epsilon},\ 
		\frac{\varphi}{\log(m^{-1}\varphi)+\epsilon}
		\right)
	\end{align*}
	is increasing in $\varphi$, and therefore commutes with suprema. For $K = K(m,\epsilon)$ sufficiently large, we can use $\varphi_*\ge K$ to write
	\begin{align}
	\begin{split}
		\label{monotonic_phi_logphi_bound}
		\sup_{s\in[0,t]}
		\max\left(\frac{m}{\epsilon},\ 
		\frac{\varphi(s)}{\log(m^{-1}\varphi(s))+\epsilon}
		\right)
		&\le
		\max\left(\frac{m}{\epsilon},\ 
			\frac{\varphi_*(t)}{\log(m^{-1}\varphi_*(t))+\epsilon}
		\right)
		\\
		&=
		\frac{\varphi_*(t)}{\log(m^{-1}\varphi_*(t))+\epsilon},
	\end{split}
	\end{align}
	and we can then bound the term
	\begin{align*}
		&
		\alpha\log(t m^{-1}\varphi_*(t)\log\varphi_*(t))
		\frac{\varphi_*(t)}{\log(m^{-1}\varphi_*(t))+\epsilon}
		\\
		&\quad\le
		\alpha
			\left[
				\log t + \log(m^{-1}\varphi_*(t)) + \log\log\varphi_*(t)
			\right]
		\frac{\varphi_*(t)}{\log(m^{-1}\varphi_*(t))+\epsilon} 
		\\
		&\quad\le
		\frac{\alpha\log t}{\log(m^{-1}\varphi_*(t))}\varphi_*(t) + \alpha\varphi_*(t)
		+
		\alpha\frac{\log\log\varphi_*(t)}{\log(m^{-1}\varphi_*(t))}\varphi_*(t).
	\end{align*}
	If we choose $p=2\alpha/(1-\alpha)$, then using that $\varphi_*(t)\ge K(1+t)^p$, we have that
	\begin{align*}
		\frac{\alpha\log t}{\log(m^{-1}\varphi_*(t))}
		&\le \frac{\alpha\log t}{\log(m^{-1}K) + p\log(1+t)} \\
		&\le \frac{\alpha}{p} = \frac{1-\alpha}{2},
	\end{align*}
	where we have assumed $K\ge m$. By choosing $K$ sufficiently large, we can also ensure that
	\begin{align*}
			\frac{c_1 m}{\log\varphi_*(t)} + 
			\alpha\frac{\log\log\varphi_*(t)}{\log(m^{-1}\varphi_*(t))}\le \frac{1-\alpha}{4},
	\end{align*}
	so that by combining terms, we can estimate
	\begin{align}
		\varphi(t)
		&\le
		c_0 + \frac{3+\alpha}{4}\varphi_*(t) + c_2 \int_0^t\! 
		 \max\left(\frac{m}{\epsilon},\ 
			 \frac{\varphi(s)}{\log(m^{-1}\varphi(s))+\epsilon}
		 \right)
		\dd s \nonumber
		\\
		&\le
		c_0  + \frac{3+\alpha}{4}\varphi_*(t) + c_2 \int_0^t\! 
		\frac{\varphi_*(s)}{\log(m^{-1}\varphi_*(s))}
		\dd s
		\label{varphistar_second_bound}
	\end{align}
	where in the second inequality we have used the bound \eqref{monotonic_phi_logphi_bound}. Since the terms in \eqref{varphistar_second_bound} are non-decreasing in $t$, we can take suprema in $t$ of both sides of the inequality to get
	\begin{align}
		\label{control_supphi_by_phistar}
		\sup_{s\in[0,t]}\varphi(s)
		\le
			c_0  + \frac{3+\alpha}{4}\varphi_*(t) + c_2 \int_0^t\! 
		\frac{\varphi_*(s)}{\log(m^{-1}\varphi_*(s))}
		\dd s.
	\end{align}
	In order to replace the left hand side with $\varphi_*(t)$, we have to prove the bound
	\begin{align}
		\label{polynomial_mollification_of_phi}
		K(1+t)^p \le C_0 + \frac{3+\alpha}{4}\varphi_*(t) 
		+
		c_2 \int_0^t\! 
		\frac{\varphi_*(s)}{\log(m^{-1}\varphi_*(s))}
		\dd s
	\end{align}
	for $C_0$ sufficiently large. We bound
	\begin{align*}
		C_0
		+
		c_2 \int_0^t\! 
		\frac{\varphi_*(s)}{\log(m^{-1}\varphi_*(s))}
		\dd s
		&\ge 
		C_0
		+
		c_2\int_0^t\!\frac{ K(1+s)^p}{\log(K/m) + p\log(1+s)} \\
		&\ge
		C_0 + C' \int_0^t (1+s)^{p -1/2}\dd s \\
		&=
		C_0 + C'' (1+t)^{p+1/2}
	\end{align*}
	where we have used the bound
	\begin{align*}
		 \log(K/m) + p\log(1+s)\le C(1+\log(1+s))\le C'\sqrt{1+s}
	\end{align*}
	for $C'$ depending on $K$. But by taking $C_0$ sufficiently large, we have that
	\begin{align*}
		C_0 + C'' (1+t)^{p + 1/2} \ge K (1+t)^p
	\end{align*}
	for all $t\ge 0$, which proves \eqref{polynomial_mollification_of_phi}. By combining this with \eqref{control_supphi_by_phistar} we get
	\begin{align*}
		\varphi_*(t) \le C_0 + \frac{3+\alpha}{4}\varphi_*(t) + c_2\int_0^t\! 
		\frac{\varphi_*(s)}{\log(m^{-1}\varphi_*(s))}
		\dd s,
	\end{align*}
	and therefore that
	\begin{align*}
		\varphi_*(t) \le C_0' + C_2'\int_0^t\! 
		\frac{\varphi_*(s)}{\log(m^{-1}\varphi_*(s))}
		\dd s.
	\end{align*}
	This can then be solved by standard ODE estimates, which give that
	\begin{align*}
		\varphi_*(t) \le e^{C\sqrt{1+t}},
	\end{align*}
	where $C = C(c_0,c_1,c_2,\epsilon,m,\alpha)$, and since $\varphi(t)\le\varphi_*(t)$ by construction, this concludes the proof.

		\end{proof}

	By applying Lemma \ref{small_entropy_ODE} to the bound \eqref{norm(f)_X_small_entropy_bound}, we see that under the conditions of Theorem \ref{Thm1}, for any solution $f\in L^\infty_{\mrm{loc}}([0,T_*),X)$ to \eqref{1D_Boltzmann} from Lemma \ref{local_lemma}, we have the bound
	\begin{align*}
		\norm{f(t)}_X\le e^{C\sqrt{1+t}}
	\end{align*}
	for $C = C(\norm{\phi}_{L^1},\norm{(1-r^{-1})B}_{L^\infty_{r,\theta}},\norm{f_{\mrm{in}}}_X, m)$. Therefore for any finite time $T\le T_*$ we have a uniform bound $f \in L^\infty([0,T),X)$, so by the blowup criterion of Lemma \ref{local_lemma} the solution $f$ must exist for all time, which concludes the proof of Theorem \ref{Thm1}.

	\section{Large Data} \label{Thm2_Proof_Section}

	We now present a proof of Theorem \ref{Thm2}. Our strategy is similar to the proof of Theorem \ref{Thm1}, but we now require an analogue of the bound \eqref{norm(f)_X_small_entropy_bound} which will hold for large data. In this section, we impose hypotheses \textbf{(H1)} and \textbf{(H2)} on the collision kernel $B$. This will allow us to exploit the Bony functional, which we define in \eqref{Bony_functional_definition}. This functional was used by Cercignani to construct weak solutions to \eqref{1D_Boltzmann} in \cite{Cercignani05}, using the following key estimate.
	
	\begin{lemma}[\cite{Cercignani05}] \label{Cercignani05_Lemma2.4}
		Let $f\in C^\infty([0,T]\times\RR_x^1\times\RR_v^3)\cap C([0,T], L^1(1+|v|^2))$ be a classical solution to \eqref{1D_Boltzmann} on the real line, and let the collision kernel $B$ satisfy hypotheses \textbf{(H1)} and \textbf{(H2)}. Then we have the estimate
		\begin{align*}
			&
			\int_0^T\!\int\limits_{\RR^1_x\times\RR^3_v\times\RR^3_{v_*}\times\SSS^2_\sigma}\!\!\!
			B\left( |v-v_*|,\sigma\cdot\frac{v-v_*}{|v-v_*|}\right)
			|v-v_*|^2 f(x,v,t) f(x,v_*,t)\dd\sigma\dd v_*\dd v\dd x\dd t
			\\
			&\qquad\qquad\qquad
			\le 
			K_0<\infty
		\end{align*}
		where $K_0 = K_0(f_{\mrm{in}},B)$ is a constant independent of time.
	\end{lemma}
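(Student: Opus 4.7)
The plan is to construct a Tartar--Bony Lyapunov functional adapted to the one-dimensional kinetic structure and extract the desired control from its dissipation. I define
\begin{equation*}
\ca B(t) := \iint_{\{x<y\}}\iint_{\RR^3\times\RR^3}(v_1-v_{*,1})_+\,f(t,x,v)\,f(t,y,v_*)\,dv\,dv_*\,dy\,dx.
\end{equation*}
First I would show that $\ca B(t)$ is bounded uniformly in $t$: the pointwise bound $(v_1-v_{*,1})_+\le|v|+|v_*|$ and Tonelli give $\ca B(t)\le 2\|f(t)\|_{L^1}\|\,|v|f(t)\|_{L^1}$, which is controlled by $\|f_{\mrm{in}}\|_{L^1(1+|v|^2)}^2$ via Cauchy--Schwarz and conservation of mass and energy from Proposition \ref{regularity_proposition}.

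Next I would differentiate $\ca B(t)$ along \eqref{1D_Boltzmann}. The transport part $v_1\partial_x+v_{*,1}\partial_y$, after integration by parts in $(x,y)$, concentrates on the diagonal $\{x=y\}$ (where $-\partial_x 1_{x<y}+\partial_y 1_{x<y}$ produces $2\delta(x-y)$), and yields the dissipation
\begin{equation*}
-\frac{1}{2}\int_{\RR^1_x}\iint_{\RR^3\times\RR^3}(v_1-v_{*,1})^2\,f(t,x,v)\,f(t,x,v_*)\,dv\,dv_*\,dx.
\end{equation*}
The collisional contribution, handled via the weak form of $Q(f,f)$ tested against the $1$-Lipschitz function $u\mapsto(u_1-c)_+$, satisfies the pointwise bound $|\psi(v')+\psi(v_*')-\psi(v)-\psi(v_*)|\le 2|v-v_*|$; combining this with the $L^\infty$ bound on $(1+r^{-1})B$ from \textbf{(H1)} and the moment control above, this remainder is integrable in $t\in[0,T]$ independently of $T$. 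Integrating the resulting differential inequality would give
\begin{equation*}
\int_0^T\!\int_{\RR^1_x}\!\iint(v_1-v_{*,1})^2\,f(t,x,v)\,f(t,x,v_*)\,dv\,dv_*\,dx\,dt\le K_1(f_{\mrm{in}},B).
\end{equation*}

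The last step is to upgrade the one-dimensional weight $(v_1-v_{*,1})^2$ to the full collisional weight $\int_{\SSS^2}B(|v-v_*|,\cos\theta)|v-v_*|^2\,d\sigma$. Here I invoke both parts of \textbf{(H2)}: the low-velocity cutoff $B=0$ for $|v-v_*|\le R_0$ and the angular balance condition $\delta\sup_\theta B\le 2\pi\int_0^\pi B\sin\theta\,d\theta$. Using the pre-post collisional identity $v_1'-v_{*,1}'=(\sigma\cdot e_1)|v-v_*|$, the post-collisional projected relative velocity provides the bridge, and after a pre-post change of variables $(v,v_*)\leftrightarrow(v',v_*')$ the angular-integrated weight $\int_{\SSS^2}B|v-v_*|^2\,d\sigma$ is dominated by the post-collisional dissipation quantity, which has the same form as the pre-collisional Bony dissipation already controlled.

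The principal obstacle I expect is this final angular-averaging step: the one-dimensional spatial transport produces dissipation only in the $e_1$-projected relative velocity, whereas the full collisional weight integrates $|v-v_*|^2$ against $B$ over all of $\SSS^2$, so both the low-velocity cutoff and the angular balance in \textbf{(H2)} must be exploited simultaneously to recover the full three-dimensional weight.
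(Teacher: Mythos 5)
Your Lyapunov functional is essentially the right object, but the precise form you chose introduces a gap that is not fixable by the estimate you cite, and the final ``upgrade'' step skips the key mechanism the paper uses.

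\textbf{The collisional remainder does not close.} You define
$\ca B(t) = \iint_{x<y}(v_1-v_{*,1})_+ f(x,v)f(y,v_*)$.
The Tartar--Bony--Cercignani functional (and the paper's $L(t)$ in \eqref{Bony_functional_definition}) uses $\ggg'(x-x_*)\,(v_1-v_{*,1})$, i.e.\ $(v_1-v_{*,1})$ \emph{without} the positive part; equivalently it is the antisymmetrization $\iint_{x<y}(v_1-v_{*,1})f f_*$. This is affine in $v$ (and in $v_*$), so testing $Q(f,f)$ against it gives zero by conservation of mass and momentum --- this is exactly what the paper exploits (``$\psi(t,x,v)$ is an affine function of $v$ to cancel the collision term''). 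Your $(v_1-v_{*,1})_+$ is only Lipschitz, not affine, so the collisional contribution to $d\ca B/dt$ does not vanish. Your proposed bound on the remainder, via $|\psi(v')+\psi(v_*')-\psi(v)-\psi(v_*)|\le 2|v-v_*|$, produces after integrating out the outer $(y,v_*)$ (giving a factor $m$) a quantity of order
$m\int_0^T\!\int_x\!\iint B\,|v-v_*|\,f(x,v)f(x,v_*)\,d\sigma\,dv_*\,dv\,dx\,dt$,
and since $B$ vanishes for $|v-v_*|\le R_0$ by \textbf{(H2)}, this is bounded below and above by multiples of $\int_0^T A(t)\,dt$ --- precisely the quantity you are trying to bound. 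The moment control from $L^1(1+|v|^2)$ does \emph{not} suffice here because the $x$-integral of $f f_*$ at the same $x$ involves $\int\rho^2$, not a moment. So the step ``this remainder is integrable independently of $T$'' is circular. Using the antisymmetric functional removes this term identically.

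\textbf{The upgrade from the 1D Bony dissipation to the full weight is not a pre-post change of variables.} Once you have $\int_0^T D_B(t)\,dt\le K_1$ with $D_B=\int(v_1-v_{*,1})^2 ff_*$, you still need $\int_0^T\int B|v-v_*|^2 ff_*\,d\sigma\,dv\,dv_*\,dx\,dt$. The pre-post change of variables applied to $\int\sigma_1^2|v-v_*|^2\,B\,ff_*\,d\sigma$ is an involution (it gives back the same integral with $f'f_*'$ in place of $ff_*$, and then back again), and it never produces anything with the structure of $D_B$, which has no $B$, no $\sigma$-integral, and $ff_*$ at the same $x$. The paper's route is genuinely different: multiply \eqref{1D_Boltzmann} by $v_1^2$, integrate in $(t,x,v)$, and use $\int Q\,dv=\int v_1 Q\,dv=0$ to convert to $(v_1-u_1)^2$ with $u_1$ the bulk velocity; this gives an a priori bound on $\int_0^T\int(v_1-u_1)^2\,Q(f,f)$ by the initial energy. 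Then split $Q=Q^+-Q^-$: the loss part is controlled by $D_B$ (angular integral of $B$ is bounded, and $(v_1-u_1)^2$ relates to $D_B$), and the gain part is lower-bounded by $\frac18\int\sigma_1^2|v-v_*|^2\,B\,ff_*$ minus terms of the same form as $D_B$, via a pre-post change and Young's inequality on $(v_1'-u_1)^2$. Only at this point does \textbf{(H2)}'s angular-balance condition enter, to replace $\int\sigma_1^2 B\,d\sigma$ by $\frac{C}{\de^2}\int B\,d\sigma$ from below. The low-velocity cutoff in \textbf{(H2)} is not actually needed for this step --- it is needed elsewhere in the paper (for the $L^\infty_x L^1_v$ estimate) but not to prove the lemma in question.

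In short: switch to the antisymmetric functional (so the collisional term cancels), and for the upgrade you need the energy-flux argument $\int v_1^2 Q$ with the $Q^\pm$ split and Young's inequality before \textbf{(H2)} can be used.
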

	
	This estimate was then used by Biryuk, Craig, and Panferov for their large data global well-posedness result \cite{BiryukCraigPanferov}. In this section, we show how the work \cite{BiryukCraigPanferov} can be extended to obtain quantitative estimates.

	Our first challenge is to extend Lemma \ref{Cercignani05_Lemma2.4} to solutions $f\in L^\infty_{\mrm{loc}}([0,T_*),X)\cap C([0,T_*),L^1(1+|v|^2))$. Since the space $X$ contains functions $g$ with discontinuous densities $\int g\dd v$, which cannot be approximated in the $X$ norm by smooth functions, we are not able to use continuous dependence on the initial data from the local well-posedness theory. One might hope to instead approximate $f$ weakly by smooth solutions $f^\epsilon$ and prove Lemma \ref{Cercignani05_Lemma2.4} for $f$ using a convexity argument. However, since we are not assuming that $f\log f\in L^1(\Omega_x^1\times\RR_v^3)$, we cannot control the entropy dissipation, and therefore the weak-strong uniqueness result of \cite{Lions94} is not available to us. Therefore we cannot guarantee that $f^\epsilon_{\mrm{in}}\rightharpoonup f_{\mrm{in}}$ will imply that $f^\epsilon\rightharpoonup f$ holds on $[0,T]\times\Omega_x^1\times\RR_v^3$ up to subsequences, so we will instead prove Lemma \ref{Cercignani05_Lemma2.4} directly in low regularity. Our proof will closely mirror that of \cite{Cercignani05}, with modifications to directly treat solutions $f$ in $ L^\infty_{\mrm{loc}}([0,T_*),X) \cap C([0,T_*),L^1(1+|v|^2))$. To do this, we will rely on the doubling of variables method, developed by Kruzkov \cite{Kruzkov70} to control weak solutions to scalar conservation laws.
	
	Our next challenge is to use Lemma \ref{Cercignani05_Lemma2.4} to obtain quantitative growth estimates for $f$ in $X$. In \cite{BiryukCraigPanferov} it was shown that Lemma \ref{Cercignani05_Lemma2.4} implies $f\in L^\infty_{\mrm{loc}}([0,\infty),X)$. This was proved using an integral inequality estimate for $\norm{f(t)}_X$ with coefficients depending on the integral in Lemma \ref{Cercignani05_Lemma2.4}, and by closing this estimate without a rate. In this paper we prove sharp growth estimates for the integral inequality in \cite{BiryukCraigPanferov}, which allow us to show that $\norm{f(t)}_X\le C e^{C t}$ when $\Omega_x^1=\TT_x^1$, and $\norm{f(t)}_X\to 0$ as $t\to\infty$ when $\Omega_x^1 = \RR_x^1$.
	
	We start by adapting Lemma \ref{Cercignani05_Lemma2.4} as follows.
	
	\begin{lemma} 
		\label{integral_of_A_lemma}
		Let $f\in L^\infty_{\mrm{loc}}([0,T_*),X)\cap C([0,T_*),L^1(1+|v|^2))$ be a solution to \eqref{1D_Boltzmann} on $\Omega_x^1\times\RR_v^3$, defined up to maximal time $0<T_*\le \infty$. Suppose the collision kernel $B$ satisfies the hypotheses \textbf{(H1)} and \textbf{(H2)}. Writing
		\begin{align*}
			A(t) = 
			\int\limits_{\RR^1_x\times\RR^3_v\times\RR^3_{v_*}\times\SSS^2_\sigma}\!\!\!\!
			B\left( |v-v_*|,\sigma\cdot\frac{v-v_*}{|v-v_*|}\right)
			|v-v_*|^2 f(x,v,t) f(x,v_*,t)\dd\sigma\dd v_*\dd v\dd x,
		\end{align*}
		we have the estimate
		\begin{align*}
			\int_0^T A(t)\dd t
			\le
			\begin{cases}
				K_0 (1+T) & \Omega_x^1 = \TT_x^1
				\\
				K_0 & \Omega_x^1 = \RR_x^1
			\end{cases},
		\end{align*}
		where $K_0 = K_0\big(\norm{f_\mrm{in}}_{L^1(1+|v|^2)},B\big)$.
	\end{lemma}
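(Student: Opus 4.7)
My plan is to adapt the proof of Lemma~\ref{Cercignani05_Lemma2.4} from \cite{Cercignani05}, which handles classical smooth solutions, to the low-regularity setting $f\in L^\infty_{\mrm{loc}}([0,T_*),X)\cap C([0,T_*),L^1(1+|v|^2))$ via a doubling-of-variables / mollification argument in the spirit of Kruzkov \cite{Kruzkov70}. The core tool is a one-dimensional Bony--Tartar functional whose time derivative controls the quantity $A(t)$ from above; the low velocity cutoff in hypothesis~\textbf{(H2)} is what allows $|v-v_*|^2 B$ to be compared with the dissipative term produced by the Bony functional derivative.

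\textbf{Bony functional on the line.} On $\RR_x^1$, define
\[
\mathcal B[f](t) = \iint_{\RR_x\times\RR_y}\!\iint_{\RR_v^3\times\RR_w^3}\!\!\mathbf 1_{x<y}\,(v_1-w_1)_+\,f(t,x,v)f(t,y,w)\dd v\dd w\dd x\dd y.
\]
The splitting $(v_1-w_1)_+\le |v_1|+|w_1|$ together with mass conservation yields the uniform bound $\mathcal B[f](t)\le 2m\norm{f(t)}_{L^1_1}\le C(\norm{f_{\mrm{in}}}_{L^1(1+|v|^2)})$. Differentiating in time along a smooth solution and integrating by parts in $x$ and $y$ produces a dissipative contribution $-\int(v_1-w_1)_+^2\, f(t,x,v)f(t,x,w)\dd v\dd w\dd x$ from the transport terms, plus a bilinear collisional remainder $\mathcal C(t)$. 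Using hypothesis~\textbf{(H2)} and momentum conservation $v_1'+v_{*,1}'=v_1+v_{*,1}$, $\mathcal C(t)$ can be controlled by $A(t)^{1/2}$ times moments of $f$ and absorbed via Young's inequality; after symmetrizing $v\leftrightarrow w$ and integrating in time, the bound $\int_0^T A(t)\dd t\le K_0$ then follows exactly as in \cite{Cercignani05}.

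\textbf{Justification in low regularity.} Since $f$ is not smooth in $x$, I would mollify: set $f^\epsilon=\rho_\epsilon *_x f$ for a nonnegative mollifier $\rho_\epsilon$, so that $f^\epsilon$ solves $\p_t f^\epsilon+v_1\p_x f^\epsilon=\rho_\epsilon *_x Q(f,f)$ and the Bony calculation for $\mathcal B[f^\epsilon]$ becomes fully rigorous. The commutator $\rho_\epsilon*_x Q(f,f)-Q(f^\epsilon,f^\epsilon)$ is bilinear in $f$ and $f-f^\epsilon$, and tends to zero in $L^1([0,T]\times\RR_x^1\times\RR_v^3)$ using Lemma~\ref{XtoL^1_ell_Lemma} together with strong continuity of translation on $L^1_{x,v}$. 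Passing $\epsilon\to 0$ with Fatou's lemma on the nonnegative dissipation and dominated convergence on the collisional remainder recovers the estimate for $f$.

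\textbf{Torus case and main obstacle.} On $\TT_x^1$ the Bony functional is not monotone, since the order relation $x<y$ is not preserved by transport modulo $\TT^1$ and particles revisit each other periodically. Following \cite{BiryukCraigPanferov}, I would lift $f$ periodically to $\RR_x^1$ and truncate spatially to a window $[-L,L]$ with $L\sim 1+T$; the boundary flux is controlled by $\int|v_1|f\dd v\dd x$ uniformly in $T$, yielding a bound linear in $T$ and producing the stated estimate $\int_0^T A(t)\dd t\le K_0(1+T)$. I expect the main obstacle to be the control of the cross-collision contributions $\mathcal C(t)$ after mollification, which requires the uniform $X$-bound provided by Lemmas~\ref{ContEstLemv2} and~\ref{XtoL^1_ell_Lemma} together with careful accounting of terms in the low-velocity-cutoff regime where hypothesis~\textbf{(H2)} becomes essential.
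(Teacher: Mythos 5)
The Bony-functional strategy is indeed the right one, and your low-regularity plan via mollification in $x$ (versus the paper's mollification of the Green's function $\ggg$) is an acceptable variant of the doubling-of-variables idea. However, there are several genuine gaps. First, your functional uses the weight $\mathbf 1_{x<y}(v_1-w_1)_+$, which is \emph{not} affine in $v$. Consequently the collisional contribution $\mathcal C(t)$ to $\frac{d}{dt}\mathcal B$ does not vanish, and you are forced to try to ``absorb'' it. The paper (following Tartar, Bony, Cercignani) uses the affine weight $\ggg'(x-x_*)(v_1-v_{*,1})$, for which the collisional contribution vanishes identically by conservation of mass and momentum; this is exactly why the Bony dissipation inequality is clean and does not involve $A(t)$ at all. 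Second, and more importantly, you conflate the Bony dissipation with the target quantity $A(t)$. The Bony dissipation $D_B$ controls $\int f f_*(v_1-v_{*,1})^2$, whereas $A(t)$ involves $|v-v_*|^2 B$ with the full three-dimensional relative speed. These are not comparable pointwise. The bridge is a separate energy estimate: multiply \eqref{1D_Boltzmann} by $v_1^2$, integrate, use the collisional invariants to bound $\int(v_1-u_1)^2 Q^+(f,f)$ by the energy plus $\int_0^T D_B$, perform the pre-postcollisional change of variables and Young's inequality on the expansion $v_1'-u_1 = \tfrac{v_1-u_1}{2}+\tfrac{v_{*,1}-u_1}{2}+\sigma_1\tfrac{|v-v_*|}{2}$ to extract $\sigma_1^2|v-v_*|^2 B$, and \emph{only then} invoke the angular non-degeneracy half of \textbf{(H2)} to lower-bound $\int\sigma_1^2 B\,\dd\sigma$ by a constant times $\int B\,\dd\sigma$. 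Your emphasis on the low-velocity cutoff is therefore misplaced for this lemma: that part of \textbf{(H2)} is used later, when converting the $q^{-1}|v-v_*|^{-1}$ factor in the angular averaging bound into $q^{-1}R_0^{-1}$; here the angular non-degeneracy is what is essential.

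The torus case is also a gap. Your lift-to-$\RR$-and-truncate-to-$[-L,L]$ with $L\sim 1+T$ does not obviously produce a bound linear in $T$: the truncated functional itself is already $O(L^2)$, and the boundary flux terms at $x=\pm L$ involve pointwise-in-$x$ velocity moments of $f$, which are not controlled by the $L^1(1+|v|^2)$ norm. The paper avoids all this by defining the Bony functional directly on $\TT^1$ via the periodic Green's function $\ggg(x)=-\tfrac{|x|}{2}+\tfrac{x^2}{2}$. Then $\ggg''=-\delta_0+1$ on $\TT^1$, so the mollified dissipation identity acquires an extra nonnegative term $\ell(t)$, which is bounded uniformly in $t$ by $\norm{f_{\mrm{in}}}_{L^1}\norm{f_{\mrm{in}}}_{L^1(1+|v|^2)}$; integrating over $[0,T]$ produces precisely the $K_0(1+T)$ growth, with no spatial truncation or boundary flux to worry about. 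I recommend you adopt the affine weight and the periodic Green's function, and spell out the $v_1^2$-energy estimate, which is the crux of the passage from $D_B$ to $A$.
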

	
	Before we prove Lemma \ref{integral_of_A_lemma}, we introduce some functionals which will be useful for our calculations. We define the one-dimensional Green's functions $\ggg:\Omega_x^1\to\RR$ as
	\begin{align*}
		\ggg(x) = -\frac{|x|}{2}
	\end{align*}
	when $\Omega_x^1 = \RR_x^1$, and as
	\begin{align*}
		\ggg(x) = -\frac{|x|}{2} + \frac{x^2}{2},\qquad x\in\left[-\frac12,\frac12\right]
	\end{align*}
	when $\Omega_x^1=\mathbb{T}_x^1$, where we extend $\ggg$ to $\TT_x^1$ by periodicity. Differentiating gives the distributional identities
	\begin{align*}
		\ggg'(x) = 
		\begin{cases}
			-\frac12 \operatorname{sgn}(x) & \Omega_x^1 = \RR_x^1\\
			-\frac12\operatorname{sgn}(x) + x 
			& \Omega_x^1 = \TT_x^1,\quad x\in[-1/2,1/2]
		\end{cases}
	\end{align*}
	and
	\begin{align}\label{ggg''_formula}
		\ggg''(x) =
		\begin{cases}
			-\delta_0(x) & \Omega_x^1 = \RR_x^1\\
			-\delta_0(x) + 1
			& \Omega_x^1 = \TT_x^1
		\end{cases},
	\end{align}
	where $\delta_0$ is a Dirac delta function at $0$.
	
	We can now define the \emph{Bony functional} as
	\begin{align}\label{Bony_functional_definition}
		L(t)
		=
		\int\limits_{\Omega_x^1\times\Omega_{x_*}^1\times\RR_v^3\times\RR_{v_*}^3}\!\!\!\!\!\!
		f(t,x,v)f(t,x_*,v_*)\ggg'(x-x_*)(v_1-v_{*,1})\dd v_*\dd v\dd x_*\dd x
	\end{align}
	We also define the \emph{Bony dissipation}
	\begin{align*}
		D_B(t) = 
		\int_{\Omega_x^1\times\RR_v^3\times\RR_{v_*}^3}\!\!\!\!
		f(t,x,v)f(t,x,v_*)(v_1-v_{*,1})^2\dd v_*\dd v\dd x \ge 0
	\end{align*}
	and the periodic error
	\begin{align*}
		\ell(t) = \int_{\TT_x^1\times\TT_{x_*}^1\times\RR_v^3\times\RR_{v_*}^3}\!\!\!\!
		f(t,x,v)f(t,x_*,v_*)(v_1-v_{*,1})^2\dd v_*\dd v\dd x_*\dd x \ge 0.
	\end{align*}
	We can then prove the following dissipation inequality.
	
	\begin{lemma} 
		\label{Bony_dissipation_lemma}
		For $f$ as given in Lemma \ref{integral_of_A_lemma}, we have the inequalities
		\begin{align}
			\label{Bony_inequality}
			L(t) + \int_0^t \! D_B(s)\dd s 
			\le
			L(0) + 
			\begin{cases}
				0& 													\Omega_x^1 = \RR_x^1 \\
				\int_0^t\!\ell(s)\dd s &		\Omega_x^1 = \TT_x^1
			\end{cases}.
		\end{align}
	\end{lemma}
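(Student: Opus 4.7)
For smooth, well-decaying solutions the calculation is direct. Differentiating $L(t)$ in $t$, using $\p_t f = -v_1\p_x f + Q(f,f)$, and exploiting the joint symmetry of the integrand under the swap $(x,v)\leftrightarrow (x_*,v_*)$ (which uses that $\ggg'$ is odd and $v_1-v_{*,1}$ is antisymmetric), one obtains
\begin{align*}
\frac{\mrm{d}}{\mrm{d}t} L(t) = 2 \int [\p_t f(x,v)]\, f(x_*,v_*)\, \ggg'(x-x_*)(v_1-v_{*,1}) \dd v\dd v_*\dd x\dd x_*.
\end{align*}
The contribution from $Q(f,f)$ vanishes after integration in $v$ against $(v_1-v_{*,1})$ by the conservation identities \eqref{collision_kernel_cancellation}. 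For the transport contribution, integration by parts in $x$ and a second symmetrization yield
\begin{align*}
\frac{\mrm{d}}{\mrm{d}t} L(t) = \int f(x,v)f(x_*,v_*)(v_1-v_{*,1})^2 \ggg''(x-x_*) \dd v\dd v_*\dd x\dd x_*.
\end{align*}
Substituting \eqref{ggg''_formula} gives $-D_B(t)$ on $\RR$ and $-D_B(t)+\ell(t)$ on $\TT$, and time-integration produces the claimed inequality, in fact as an equality.

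To justify this computation for $f\in L^\infty_{\mrm{loc}}([0,T_*),X)\cap C([0,T_*),L^1(1+|v|^2))$, I approximate by convolution in $x$. Let $\rho_\delta\ge 0$ be a smooth mollifier with $\int\rho_\delta=1$, and set $f^\delta = f*_x\rho_\delta$. Then $f^\delta$ is smooth in $x$ and satisfies $\p_t f^\delta + v_1\p_x f^\delta = Q(f,f)^\delta$ in the mild sense, obtained by convolving the Duhamel identity for $f^\#$ with $\rho_\delta$. Since $x$-mollification commutes with $v$-integration, the mollified source retains the cancellation $\int Q(f,f)^\delta(x,v)\varphi(v)\dd v = 0$ for $\varphi\in\{1,v_1\}$. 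Running the smooth computation above for $L[f^\delta,f^\delta]$ (with all integrals absolutely convergent by Lemma \ref{XtoL^1_ell_Lemma} and the moment assumption on $f$) yields
\begin{align*}
L[f^\delta](t) + \int_0^t\! D_B[f^\delta](s)\dd s = L[f^\delta](0) + \mathbf{1}_{\Omega_x^1=\TT}\int_0^t\!\ell[f^\delta](s)\dd s.
\end{align*}

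Passing $\delta\to 0$: $L[f^\delta](t)\to L(t)$ at each $t$ by dominated convergence, since $\|\ggg'\|_\infty<\infty$ on both $\RR$ and $\TT$ and $f^\delta\to f$ in $L^1_{x,v}(1+|v|)$ by standard mollifier arguments. The periodic error identity $\ell[f^\delta]=\ell[f]$ holds exactly, because $\ell(t) = 2\bigl(\int v_1^2 f\bigr)\bigl(\int f\bigr) - 2\bigl(\int v_1 f\bigr)^2$ depends only on integrals of $f$ in $v$ and $x$, all of which are preserved by $x$-mollification. Finally, $D_B[f^\delta]\ge 0$, so Fatou's lemma gives
\begin{align*}
\int_0^t\!D_B(s)\dd s \le \liminf_{\delta\to 0}\int_0^t\!D_B[f^\delta](s)\dd s = L(0) - L(t) + \mathbf{1}_{\Omega_x^1=\TT}\int_0^t\!\ell(s)\dd s,
\end{align*}
which is the stated inequality.

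The main technical obstacle is rigorously setting up $\frac{\mrm{d}}{\mrm{d}t}L[f^\delta,f^\delta]$ at the level of the mild formulation, since $f^\delta$ is continuous but not classically differentiable in $t$. This will be handled by writing $L[f^\delta](t)-L[f^\delta](0)$ as a time integral of an explicit expression built from $f^{\delta,\#}(s)$ and $Q(f,f)^{\delta,\#}(s)$ via the Duhamel identity, and interpreting the formal time derivative as the fundamental theorem of calculus in $L^1_{x,v}$. A secondary bookkeeping issue is ensuring pointwise (in $s$) convergence of the nonnegative integrand $D_B[f^\delta](s)$ to $D_B(s)$; this reduces to the convergence $m_2^\delta\rho^\delta\to m_2\rho$ in $L^1_x$ and $(m_1^\delta)^2\to m_1^2$ in $L^1_x$, which follow from $m_1\in L^2_x$ (a consequence of $D_B<\infty$) and standard mollifier estimates.
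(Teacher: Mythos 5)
Your proof is essentially correct but takes a genuinely different regularization route from the paper. You mollify the solution $f^\delta = f *_x \rho_\delta$ and keep the singular Green's function $\ggg$, so that $\ggg'' = -\delta_0(+1)$ can act directly on the smooth-in-$x$ product $f^\delta(x,v)f^\delta(x_*,v_*)$; the paper instead mollifies the Green's function $\ggg_\epsilon = \ggg * \eta_\epsilon$ and keeps $f$ rough, working through a weak (Kruzkov doubling-of-variables) formulation where the test function $\psi$ built from $f$ and $\ggg_\epsilon'$ is affine in $v$ and has $(\p_t + v_1\p_x)\psi$ controlled by $\norm{\ggg_\epsilon''}_\infty$. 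The two approaches are dual in where the mollifier sits. What the paper's choice buys: $\ggg_\epsilon'' \in L^\infty$ makes every step of the doubling argument a pairing of $L^1$ against $L^\infty$, with no pointwise-in-$x$ evaluation of $f$ anywhere, and the final Fatou acts transparently on the $\eta_\epsilon(x-x_*)$-weighted $(x,x_*)$ integral. What your choice buys: a more direct "differentiate $L$" computation, at the cost of two technical debts you correctly flag but underestimate. First, since $\ggg'$ is only $BV$ and not Lipschitz, $\ggg'\bigl(x-x_*+t(v_1-v_{*,1})\bigr)$ is discontinuous in $t$, so the claim that $L[f^\delta](t)-L[f^\delta](0) = \int_0^t(\cdots)\dd s$ does not follow from the FTC applied to the Duhamel formula for $f^{\delta,\#}$ alone; you must first shuffle the $x$-derivative off $\ggg'$ and onto the smooth-in-$x$ $f^\delta$ (by integration by parts), which reproduces in disguise the same two-step use of the equation that the paper makes explicit. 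Second, your Fatou step needs $D_B[f^\delta](s)\to D_B(s)$ a.e.\ $s$, which in turn needs $\rho^\delta\to\rho$ a.e.\ $x$ and $m_1\in L^2_x$; both hold (the latter because $\rho\in L^\infty_x$ and $m_2\in L^1_x$ give $m_1^2\le\rho m_2\in L^1_x$ directly), but the paper's Fatou on the $(x,x_*)$-integral avoids this moment bookkeeping entirely. Net: your argument is sound and a valid alternative, though once the hidden step (transferring derivatives from $\ggg'$ to $f^\delta$) is spelled out it is not noticeably shorter than the paper's.
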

	
	We note that Lemma \ref{Bony_dissipation_lemma} was proved for smooth data in \cite{Cercignani05}. We now present a proof that avoids regularity issues associated with $L^\infty_x$ class data.
	
	\begin{proof}
			To prove this directly for non-smooth $f$, we use the doubling of variables trick developed by Kruzkov \cite{Kruzkov70}. We note that $f\in L^\infty_{\mrm{loc}}([0,T_*),X)\cap C([0,T_*),L^1(1+|v|^2))$ is a weak solution to Boltzmann in the following sense. If $\varphi(t,x,v)$ is a \emph{suitable test function}, which we take to mean that it satisfies the bounds
		\begin{align*}
			(1+|v|^2)^{-1}\varphi \in L^\infty_{\mrm{loc}}([0,T_*),L^\infty),
			\qquad
			(1+|v|^2)^{-1}(\p_t + v_1\p_x)\varphi
			\in L^\infty_{\mrm{loc}}([0,T_*),L^\infty),
		\end{align*}
		then the weak formulation
		\begin{align}\label{weak_formulation}
			\begin{split}
				-\left.\int_{\Omega_x^1\times\RR_v^3}\!
				f\varphi\dd v\dd x\right|_{t=0}^T
				+
				\int_0^T\! \int_{\Omega_x^1\times\RR_v^3}\!
				&f(\p_t\varphi+ v_1\p_x\varphi)\dd v\dd x\dd t \\
				&\qquad
				+ \int_0^T\! \int_{\Omega_x^1\times\RR_v^3}\!
				Q(f,f)\varphi\dd v\dd x\dd t 
				=0
			\end{split}
		\end{align}
		holds for any $0<T<T_*$. To see this, we integrate the mild formulation $\p_t f^\# = Q(f,f)^\#$ against $\varphi^\#$ and integrate by parts in time to get
		\begin{align*}
			-\left.\int_{\Omega_x^1\times\RR_v^3}\! f^\#\varphi^\#\dd v\dd x\right|_{t=0}^T
			+
			\int_0^T\! \int_{\Omega_x^1\times\RR_v^3}\!
			&f^\#\p_t\varphi^\#\dd v\dd x\dd t 
			\\
			&\qquad
			+ \int_0^T\! \int_{\Omega_x^1\times\RR_v^3}\!
			Q(f,f)^\#\varphi^\#\dd v\dd x\dd t 
			=0
		\end{align*}
		where we use $Q(f,f)\in L^\infty([0,T],L^1(1+|v|^2))$ from Lemma \ref{XtoL^1_ell_Lemma} and the identity $\p_t(\varphi^\#) = (\p_t\varphi + v_1\p_x\varphi)^\#$ to derive \eqref{weak_formulation}. We regularize $\ggg_\epsilon = \ggg * \eta_\epsilon$ where $\eta_\epsilon\in C^\infty$ is an approximate identity with support in $[-\epsilon,\epsilon]$. We use this to define the regularized Bony functional 
		\begin{align*}
			L^\epsilon(t)
			=
			\int_{\Omega_x^1\times\Omega_{x_*}^1\times\RR_v^3\times\RR_{v_*}^3}\!
			f(t,x,v)f(t,x_*,v_*)\ggg_\epsilon'(x-x_*)(v_1-v_{*,1})\dd v_*\dd v\dd x_*\dd x.
		\end{align*}
		We now define
		\begin{align*}
			\psi(t,x,v) = 
			\int_{\Omega_{x_*}^1\!\times\RR_{v_*}^3}f(t,x_*,v_*) \ggg_\epsilon'(x-x_*)(v_1 - v_{*,1})\dd v_*\dd x_*
			= \psi_0(t,x) + \psi_1(t,x)v_1.
		\end{align*}
		We note that $(x_*,v_*)\mapsto g_\epsilon'(x-x_*)(v_1 - v_{*,1})$ are suitable test functions for all $(x,v)$. We can then apply \eqref{weak_formulation} to get the identity
		\begin{align*}
			\p_t\psi(t,x,v) 
			&= \int_{\Omega_{x_*}^1\!\times\RR_{v_*}^3} f(t,x_*,v_*) v_{*,1}(-\ggg_\epsilon)''(x-x_*)(v_1 - v_{*,1})\dd v_*\dd x_* \\
			&\qquad
			+
			\int_{\Omega_{x_*}^1\!\times\RR_{v_*}^3} Q(f,f)(t,x_*,v_*)
			\ggg_\epsilon'(x-x_*)(v_1 - v_{*,1})\dd v_*\dd x_* \\
			&= \int_{\Omega_{x_*}^1\!\times\RR_{v_*}^3} f(t,x_*,v_*) (-v_{*,1})\ggg_\epsilon''(x-x_*)(v_1 - v_{*,1})\dd v_*\dd x_*,
		\end{align*}
		where the integral involving $Q(f,f)$ vanishes by conservation of mass and momentum. We can also calculate
		\begin{align*}
			v_1\p_x\psi(t,x,v)
			&=
			\int f(t,x_*,v_*) v_1\ggg_\epsilon''(x-x_*)(v_1 - v_{*,1})\dd v_*\dd x_*.
		\end{align*}
		Using that $f\in L^\infty_{\mrm{loc}}([0,T_*), L^1(1+|v|^2))$ and $\ggg_\epsilon',\ggg_\epsilon''\in L^\infty$, we see that $\psi$ is a suitable test function, so we can again apply \eqref{weak_formulation} to get
		\begin{align}
			\label{Bony_difference_term}
			&\left.\int_{\Omega_{x}^1\times\RR_{v}^3\times\Omega_{x_*}^1\!\times\RR_{v_*}^3}\!\!
			f(t,x,v)f(t,x_*,v_*)\ggg_\epsilon'(x-x_*)(v_1 - v_{*,1})\dd v_*\dd x_*\dd v\dd x \right|_{t=0}^T \\
			& = 
			\left. \int_{\Omega_{x}^1\times\RR_{v}^3}\!
			f(t,x,v)\psi(t,x,v)\dd v\dd x \right|_{t=0}^T \nonumber
			\\
			&=
			\int_0^T\!\int_{\Omega_{x}^1\times\RR_{v}^3}\!
			f(t,x,v)(\p_t + v_1\p_x)\psi(t,x,v)\dd v\dd x\dd t  \nonumber
			\\
			&\quad\qquad 
			+ \int_0^T\!\int_{\Omega_{x}^1\times\RR_{v}^3}\!
			Q(f,f)(t,x,v)\psi(t,x,v)\dd v\dd x\dd t \nonumber
			\\
			\begin{split} 
				&=
				\int_0^T\!\int_{\Omega_{x}^1\times\RR_{v}^3\times\Omega_{x_*}^1\!\times\RR_{v_*}^3}\!\!\!
				f(t,x,v)f(t,x_*,v_*)
				\\
				&\qquad\qquad\qquad\qquad\qquad\qquad\qquad
				\cdot \ggg_\epsilon''(x-x_*)(v_1-v_{*,1})^2
				\dd v_*\dd x_*\dd v\dd x \dd t,
				\label{Bony_derivative_term}
			\end{split}
		\end{align}
		where we have used that $\psi(t,x,v)$ is an affine function of $v$ to cancel the collision term $Q(f,f)$. By convolution, we have the identities
		\begin{align}\label{ggg_epsilon_=_eta_epsilon}
			\ggg_\epsilon'' =
			\begin{cases}
				-\eta_\epsilon & \Omega_x^1 = \RR_x^1\\
				-\eta_\epsilon + 1
				& \Omega_x^1 = \TT_x^1
			\end{cases}.
		\end{align}
		We can then rewrite and rearrange the terms in \eqref{Bony_difference_term} and \eqref{Bony_derivative_term} to get
		\begin{align}
			&L^\epsilon(T) - L^\epsilon(0) 
			\nonumber
			\\
			&+
			\int_0^T\!\int\limits_{\Omega_{x}^1\times\RR_{v}^3\times\Omega_{x_*}^1\!\times\RR_{v_*}^3}\!\!\!
			\!\!\!
			f(t,x,v)f(t,x_*,v_*)\eta_\epsilon(x-x_*)(v_1-v_{*,1})^2\dd v_* \dd x_*\dd v\dd x\dd t
			\label{DBepsilon}
			\\
			&\qquad\qquad\qquad\qquad\qquad\qquad\qquad\qquad
			=
			\begin{cases}
				0 & \Omega_x^1 = \RR_x^1  \\
				\int_0^T \ell(t)\dd t & \Omega_x^1 = \TT_x^1
			\end{cases}.
			\nonumber
		\end{align}
		
		We see by dominated convergence that $L^\epsilon(t)\to L(t)$ for all $t\in [0,T_*)$, where we use that $f\in C([0,T_*),L^1(1+|v|^2))$ and $\ggg_\epsilon'\in L^\infty$ uniformly in $\epsilon$. We then see that \eqref{DBepsilon} is uniformly bounded in $\epsilon$. By rewriting \eqref{DBepsilon} as
		\begin{align*}
			\int_0^T\int_{\RR_{v}^3\times\RR_{v_*}^3}\!\!
			\left(
			\int_{\Omega_x^1\times\Omega_{x_*}^1}\!\! f(t,x,v)f(t,x_*,v_*)\eta_\epsilon(x-x_*)\dd x_*\dd x
			\right)
			(v_1-v_{*,1})^2
			\dd v_*\dd v\dd x\dd t
		\end{align*}
		and taking limits as $\epsilon\to0$, we see that the integral in $x_*,x$ converges pointwise for almost every $v_*,v,t$, which by Fatou's lemma gives the inequality \eqref{Bony_inequality}.
	\end{proof}
	
	The rest of the proof of Lemma \ref{integral_of_A_lemma} follows arguments in \cite{Cercignani05}, which we include here for completeness.
	
	\begin{proof}[Proof of Lemma \ref{integral_of_A_lemma}]
		We first note that we can bound 
		\begin{align*}
				|L(t)|+|\ell(t)|
				&\le C\norm{f(t)}_{L^1}\norm{f(t)}_{L^1(1+|v|^2)}
				\\
				&
				\le C \norm{f_{\mrm{in}}}_{L^1}\norm{f_{\mrm{in}}}_{L^1(1+|v|^2)},
		\end{align*}
		which gives the bound
		\begin{align}
			\int_0^T\! D_B(t)\dd t\le
			\begin{cases}
				C\, (1+T) & \Omega_x^1 = \TT_x^1
				\\
				C & \Omega_x^1 = \RR_x^1
			\end{cases}
		\end{align}
		for $C = C(\norm{f_{\mrm{in}}}_{L^1(1+|v|^2)})$. If we define the average velocity
		\begin{align}
			\label{average_velocity}
			u(x,t) = \left(\int_{\RR^3_v}\!f(t,x,v)\dd v\right)^{-1}\int_{\RR_v^3}\! v f(t,x,v)\dd v,
		\end{align}
		then we can rewrite the Bony dissipation as
		\begin{align*}
			D_B(t)
			&=
			\int_{\Omega_x^1\times\RR_v^3\times\RR_{v_*}^3}\!\!\!\!
			f(t,x,v)f(t,x,v_*)(v_1^2 - 2 v_1 v_{*,1}+v_{*,1}^2)\dd v_*\dd v\dd x
			\\
			&= 2 \int_{\Omega_x^1\times\RR_v^3\times\RR_{v_*}^3}\!\!\!\!
			f(t,x,v)f(t,x,v_*)(v_1^2 -  v_1 v_{*,1})\dd v_*\dd v\dd x
			\\
			&=
			2 \int_{\Omega_x^1\times\RR_v^3\times\RR_{v_*}^3}\!\!\!\!
			f(t,x,v)f(t,x,v_*)(v_1^2 -  u_1(x,t)^2)\dd v_*\dd v\dd x
			\\
			&=
			2 \int_{\Omega_x^1\times\RR_v^3\times\RR_{v_*}^3}\!\!\!\!
			f(t,x,v)f(t,x,v_*)(v_1^2 - 2 v_1 u_1(x,t)+ u_1(x,t)^2)\dd v_*\dd v\dd x
			\\
			&=
			2\int_{\Omega_x^1\times\RR_v^3\times\RR_{v_*}^3}\!\!\!\!
			f(t,x,v)f(t,x,v_*)(v_1-u_1(x,t))^2\dd v_*\dd v\dd x
		\end{align*}

		where the second equality follows by symmetry in $v$ and $v_*$, and where we have twice used the identity \eqref{average_velocity}.
		
		We now multiply the equation \eqref{1D_Boltzmann} by $v_1^2$ and integrate the resulting equation
		\begin{align*}
			v_1^2\p_t f + v_1^3\p_x f = v_1^2 Q(f,f)
		\end{align*}
		over $[0,T]\times\Omega_x^1\times\RR_v^3$ for $T<T_*$, to get 
		\begin{align*}
			\left. \int_{\Omega_x^1\times\RR_v^3}\! f(t,x,v) v_1^2\dd v\dd x\right|_0^T
			&=
			\int_0^T\!\!\int_{\Omega_x^1\times\RR_v^3}\!  v_1^2 Q(f,f)\dd v\dd x\dd t,
		\end{align*}
		which gives us the bound
		\begin{align*}
			\int_0^T\!\!\int_{\Omega_x^1\times\RR_v^3}\!  (v_1-u_1(x,t))^2 Q(f,f)\dd v\dd x\dd t
			&=
			\int_0^T\!\!\int_{\Omega_x^1\times\RR_v^3}\!  v_1^2 Q(f,f)\dd v\dd x\dd t
			\\
			&
			\le
			\int_{\Omega_x^1\times\RR_v^3}\! f(T,x,v) v_1^2\dd v\dd x
			\\
			&\le
			\int_{\Omega_x^1\times\RR_v^3}\! f(T,x,v) |v|^2\dd v\dd x
			\\
			&= \int_{\Omega_x^1\times\RR_v^3}\! f_{\mrm{in}}(x,v) |v|^2\dd v\dd x
			\\
			&
			\le
			\norm{f_{\mrm{in}}}_{L^1(1+|v|^2)}\le C
		\end{align*}
		where we have used the cancellation identities \eqref{collision_kernel_cancellation} in the first equality. We can then split $Q = Q^+ - Q^-$ and bound
		\begin{align}
			&\int_0^T\!\!\int_{\Omega_x^1\times\RR_v^3}\!  (v_1-u_1)^2 Q^+(f,f)\dd v\dd x\dd t
			\label{gain_term_a_priori_bound}
			\\
			&\qquad
			\le C + \int_0^T\!\!\int_{\Omega_x^1\times\RR_v^3}\!  (v_1-u_1)^2  Q^-(f,f)\dd v\dd x\dd t
			\nonumber
			\\
			&\qquad
			= C +
			\int_0^T\!\!\int\limits_{\Omega_x^1\times\RR_v^3\times\RR_{v_*}^3\times\SSS_\sigma^2}\!\!\!\! 
				(v_1-u_1)^2 B\ f(t,x,v)f(t,x,v_*)
			\dd\sigma\dd v_*\dd v\dd x\dd t
			\label{loss_term_a_priori_bound}
			\\
			&\qquad
			\le C + C\norm{B}_{L^\infty_r L^1_\theta} 
				\int_0^T\!\!\int\limits_{\Omega_x^1\times\RR_v^3\times\RR_{v_*}^3}\!\!\!\! 
			(v_1-u_1)^2 f(t,x,v)f(t,x,v_*)
			\dd v_*\dd v\dd x\dd t
			\nonumber
			\\
			&\qquad
			\le
			C' + C' \int_0^t\! D_B(t)\dd t
			\nonumber
		\end{align}
		for some constant $C' = C'(\norm{f_{\mrm{in}}}_{L^1(1+|v|^2)},B)$. But we can rewrite 
		\begin{align*}
			&\int_0^T\!\!\int_{\Omega_x^1\times\RR_v^3}\!  (v_1-u_1)^2 Q^+(f,f)\dd v\dd x\dd t
			\\
			&
			\qquad\qquad
			=
			\int_0^T\!\!\int_{\Omega_x^1\times\RR_v^3\times\RR_{v_*}^3\times\SSS_\sigma^2}\!\!\!
			(v_1-u_1)^2 B\,  f(t,x,v')f(t,x,v_*')\dd\sigma\dd v_*\dd v\dd x\dd t
			\\
			&
			\qquad\qquad
			=
			\int_0^T\!\!\int_{\Omega_x^1\times\RR_v^3\times\RR_{v_*}^3\times\SSS_\sigma^2}\!\!\!
			(v_1'-u_1)^2 B\,  f(t,x,v)f(t,x,v_*)\dd\sigma\dd v_*\dd v\dd x\dd t
			\\
			&\qquad\qquad
			=
			\int_0^T\!\!\int_{\Omega_x^1\times\RR_v^3\times\RR_{v_*}^3\times\SSS_\sigma^2}\!\!\!
			\left(
				\frac{v_1 - u_1}{2} + \frac{v_{*,1}-u_1}{2} + \sigma_1\frac{|v-v_*|}{2}
			\right)^2 
			\\
			&\qquad\qquad\qquad\qquad\qquad\qquad\qquad\qquad\qquad\cdot
			B\,  f(t,x,v)f(t,x,v_*)\dd\sigma\dd v_*\dd v\dd x\dd t
			\\
			&\qquad\qquad
			\ge
			\frac{1}{8}\int_0^T\!\!\int_{\Omega_x^1\times\RR_v^3\times\RR_{v_*}^3\times\SSS_\sigma^2}\!\!\!
			\sigma_1^2|v-v_*|^2 B\,  f(t,x,v)f(t,x,v_*)\dd\sigma\dd v_*\dd v\dd x\dd t
			\\
			&\qquad\qquad\qquad\qquad
			- \int_0^T\!\!\int_{\Omega_x^1\times\RR_v^3\times\RR_{v_*}^3\times\SSS_\sigma^2}\!\!\!
			(v_1 - u_1)^2 B\,  f(t,x,v)f(t,x,v_*)\dd\sigma\dd v_*\dd v\dd x\dd t
			\\
			&\qquad\qquad\qquad\qquad
			- \int_0^T\!\!\int_{\Omega_x^1\times\RR_v^3\times\RR_{v_*}^3\times\SSS_\sigma^2}\!\!\!
			(v_{*,1} - u_1)^2 B\,  f(t,x,v)f(t,x,v_*)\dd\sigma\dd v_*\dd v\dd x\dd t,
		\end{align*}
		where in the second equality we have used a pre-postcollisional change of variables, and in the last inequality we have applied Young's inequality. We can then apply the bounds on the terms \eqref{gain_term_a_priori_bound} and \eqref{loss_term_a_priori_bound} to get
		\begin{align*}
			\int_0^T\!\!
			\int\limits_{\Omega_x^1\times\RR_v^3\times\RR_{v_*}^3\times\SSS_\sigma^2}\!\!\!\!\!\!
			\sigma_1^2|v-v_*|^2 B\,  f(t,x,v)f(t,x,v_*)\dd\sigma\dd v_*\dd v\dd x\dd t
			\le
			C' + C' \int_0^t\! D_B(t)\dd t,
		\end{align*}
		for a new constant $C' = C'(\norm{f_{\mrm{in}}}_{L^1(1+|v|^2)},B)$. We would now like to bound the term
		\begin{align*}
			\int_{\SSS_\sigma^2}\! \sigma^2_1 
			B\left(|v-v_*|,\sigma\cdot\frac{v-v_*}{|v-v_*|}\right)\dd\sigma
			=: 
			\int_{\SSS_\sigma^2}\! \sigma^2_1 
			\beta(\sigma)\dd\sigma
		\end{align*}
		from below, using hypothesis \textbf{(H2)}, where we rewrite the collision kernel $B$ as $\beta(\sigma)$, temporarily suppressing the dependence on $v$ and $v_*$. But we can compute that
		\begin{align*}
			\int_{\SSS_\sigma^2}\! \sigma^2_1 
			\beta(\sigma)\dd\sigma
			&
			\ge
			\int_{\SSS_\sigma^2\cap\set{|\sigma_1|\ge\epsilon}}\! \sigma^2_1 
			\beta(\sigma)\dd\sigma
			\\
			&\ge
			\int_{\SSS_\sigma^2\cap\set{|\sigma_1|\ge\epsilon}}\! \epsilon^2
			\beta(\sigma)\dd\sigma
			\\
			&=
			\epsilon^2\left(
				\int_{\SSS_\sigma^2}\!\beta(\sigma)\dd\sigma
				-
				\int_{\SSS_\sigma^2\cap\set{|\sigma_1|<\epsilon}}\!\beta(\sigma)\dd\sigma
			\right)
			\\
			&\ge
			\epsilon^2\left(
				\int_{\SSS_\sigma^2}\!\beta(\sigma)\dd\sigma
				-
				4\pi\epsilon\sup_{\sigma\in\SSS^2}\beta(\sigma)
			\right)
			\\
			&\ge
			\epsilon^2
			\left(
				\int_{\SSS_\sigma^2}\!\beta(\sigma)\dd\sigma
				-
				\frac{4\pi\epsilon}{\delta}\int_{\SSS_\sigma^2}\!\beta(\sigma)\dd\sigma
			\right)
		\end{align*}
		holds for any $\epsilon>0$, where in the last inequality we have used hypothesis \textbf{(H2)}, and therefore by choosing $\epsilon = \delta/8\pi$, we get the bound
		\begin{align*}
			\int_{\SSS_\sigma^2}\! \sigma^2_1 
			B\left(|v-v_*|,\sigma\cdot\frac{v-v_*}{|v-v_*|}\right)\dd\sigma
			\ge
			\frac{C}{\delta^2}
			\int_{\SSS_\sigma^2}\!
			B\left(|v-v_*|,\sigma\cdot\frac{v-v_*}{|v-v_*|}\right)\dd\sigma
		\end{align*}
		for an absolute constant $C$, independent of $v$ and $v_*$. Therefore, we can bound
		\begin{align*}
			\int_0^T\!\!
			\int\limits_{\Omega_x^1\times\RR_v^3\times\RR_{v_*}^3\times\SSS_\sigma^2}\!\!\!\!\!\! 
			|v-v_*|^2 B\,  &f(t,x,v)f(t,x,v_*) \dd\sigma\dd v_*  \dd v\dd x\dd t
			\\
			&
			\le
			C' + C' \int_0^T D_B(t)\dd t
			\le
			\begin{cases}
				K_0 (1+T) & \Omega_x^1 = \TT_x^1
				\\
				K_0 & \Omega_x^1 = \RR_x^1
			\end{cases}
		\end{align*}
		for some constant $K_0 = K_0(\norm{f}_{L^1(1+|v|^2)},B)$, which concludes the proof.
	\end{proof}
	
	We can now follow arguments similar to those in the proof of Theorem \ref{Thm1}. By combining the bound \eqref{second_bound} with the hypothesis \textbf{(H2)}, we get
	\begin{align*}
			&\norm{S_q Q^+(f,f)}_X
			\\
			&\quad\le
			4\pi\delta^{-1}
			\int_{\TT_x^1}\!\int_{\RR_v^3\times\RR_{v_*}^3\times\mathbb{S}_\sigma^2}\!
			f(t,x,v)f(t,x,v_*) B\left(|v-v_*|,\sigma\cdot\frac{v-v_*}{|v-v_*|}\right)
			\\
			&\qquad\qquad\qquad\qquad\qquad\qquad\qquad\qquad\qquad\qquad\quad\cdot
			\left(
			1 + \frac{1}{q|v-v_*|}
			\right)
			\dd \sigma\dd v_*\dd v\dd x 
			\\
			&\quad\le
			4\pi\delta^{-1}(1+R_0^{-1})\left(1+\frac1q\right) A(t).
		\end{align*}
		Combining this with the estimate from Lemma \ref{ContEstLemv2} gives the bound
		\begin{align*}
			\norm{S_q Q^+(f,f)}_X
			\le
			\min\left(
			C_1\norm{f}_X^2,\ C_2 \left(1+\frac1q\right) A(t)
			\right),
		\end{align*}
		where $C_1$ and $C_2$ depend on the collision kernel $B$ only. As in \eqref{norm(f)_X_small_entropy_bound}, we can plug this into the Duhamel formulation \eqref{Duhamel_Boltzmann} and use non-negativity $f\ge 0$ to get
		\begin{align}
					\label{second_ODE_eq1}
				\norm{f(t)}_X 
				&\le
				\norm{S_{t-t_0} f(t_0)}_X + 
				\int_{t_0}^t\! \norm{S_{t-s} Q^+(f,f)(s)}_X\dd s
				\\
					\label{second_ODE_eq2}
				&\le
				\norm{f(t_0)}_X
				+
				\int_{t_0}^t\!
				\min\left(
				C_1\norm{f(s)}_X^2,\ C_2 \left(1+\frac{1}{t-s}\right) A(s)
				\right)
				\dd s
		\end{align}
		for any $0\le t_0\le t< T_*$. This integral inequality was introduced in \cite{BiryukCraigPanferov}, in which it was shown to imply $f\in L^\infty_{\mrm{loc}}([0,\infty),X)$, without a rate. We now prove sharp growth estimates for this inequality, which is the main technical contribution of this work.
	
		\begin{lemma}
		\label{Bony_ODE}
		Let $\varphi\in L^\infty_{\mrm{loc}}([0,T_*))$ be non-negative and locally bounded, and satisfy the integral inequality
		\begin{align}
			\label{nonlinear_Gronwall_prop_eq1}
			\varphi(t)\le \varphi(t_0)+ \int_{t_0}^t \min\left\{ c\varphi(s)^2, \left( 1+\frac{1}{t-s} \right) a(s) \right\} \dd s
		\end{align}
		for all $0\le t_0\le t < T_*$, for some $a\in L^1([0,T_*))$ with $a\ge0$. Then $\varphi$ satisfies the inequality
		\begin{align}
			\label{nonlinear_Gronwall_prop_eq2}
			\varphi(t)\le 2^{1+16 c\int_0^t a(s)\dd s} \left(\varphi(0) + \frac{1}{8 c} \right).
		\end{align}
	\end{lemma}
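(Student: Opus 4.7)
The plan is to reduce \eqref{nonlinear_Gronwall_prop_eq2} to a \emph{doubling lemma} for the shifted quantity $\Psi(t):=\varphi(t)+\tfrac{1}{8c}$; the shift keeps $\Psi\ge \tfrac{1}{8c}>0$ even when $\varphi(0)=0$, and accounts for the additive $\tfrac{1}{8c}$ in the stated bound. The doubling lemma I aim to prove is
\begin{align*}
c\!\int_{t_0}^{t_1}\!a(s)\,\dd s\ \le\ \tfrac{1}{16}
\qquad\Longrightarrow\qquad
\Psi(t_1)\le 2\,\Psi(t_0).
\end{align*}
Granting this, \eqref{nonlinear_Gronwall_prop_eq2} follows immediately: partition $[0,t]$ into $N\le 1+16c\!\int_0^t\!a$ consecutive subintervals of $a$-mass $1/(16c)$, iterate the doubling $N$ times, and conclude $\Psi(t)\le 2^N\Psi(0)\le 2^{\,1+16c\int_0^t a}\Psi(0)$.

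For the doubling lemma, fix $t\in[t_0,t_1]$ and a free parameter $\delta>0$, and set $M(t):=\sup_{s\in[t_0,t]}\varphi(s)$, $I(t):=\int_{t_0}^t a$. Split the integral in \eqref{nonlinear_Gronwall_prop_eq1} at $s=t-\delta$: on the far piece $[t_0,t-\delta]$ bound $\min\{\cdot\}\le (1+1/\delta)a$, and on the near piece $[t-\delta,t]$ bound $\min\{\cdot\}\le c\varphi^2\le cM(t)^2$ (if $t-t_0<\delta$, use the quadratic bound throughout). This yields, uniformly in $t$ and $\delta$,
\begin{align*}
\varphi(t)\ \le\ \varphi(t_0)+c\delta M(t)^2+\bigl(1+\tfrac{1}{\delta}\bigr)I(t).
\end{align*}
The right-hand side is non-decreasing in $t$, so replacing $t$ by $t'\in[t_0,t]$ and taking the supremum gives the self-referential bound $M(t)\le\varphi(t_0)+c\delta M(t)^2+(1+1/\delta)I(t)$. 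Optimising over $\delta>0$ (the minimum of $c\delta M^2+I/\delta$ occurs at $\delta=\sqrt{I/c}/M$ with value $2M\sqrt{cI}$) gives
\begin{align*}
M(t)\ \le\ \varphi(t_0)+I(t)+2M(t)\sqrt{cI(t)}.
\end{align*}
If $\sqrt{cI(t)}\le \tfrac14$ the last term is absorbed to yield $M(t)\le 2(\varphi(t_0)+I(t))\le 2\varphi(t_0)+\tfrac{1}{8c}$, using $I(t)\le 1/(16c)$. Consequently $\Psi(t)=M(t)+\tfrac{1}{8c}\le 2\Psi(t_0)$, which is the doubling lemma.

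The main obstacle is the non-integrable singularity of $1/(t-s)$ at $s=t$ on the linear branch, which blocks any direct Gronwall argument. The entire purpose of splitting at $s=t-\delta$ is to pay a small amount of quadratic growth on the short interval $[t-\delta,t]$ in exchange for making the singular kernel bounded by $1/\delta$ on the complement; the optimal balance $\delta\sim\sqrt{I/c}/M$ produces a self-referential inequality scaling like $\sqrt{cI}$ rather than $cI$, and it is precisely this square-root scaling which is responsible for both the constant $16$ in the exponent and the additive shift $1/(8c)$ in \eqref{nonlinear_Gronwall_prop_eq2}. A minor technical point is that $\varphi$ is only $L^\infty_{\mrm{loc}}$, but this causes no trouble because the argument is run on the supremum $M(t)$ and on the continuous majorant appearing on the right-hand side of the integral inequality.
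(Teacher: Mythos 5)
Your proof is correct and takes essentially the same approach as the paper: split the integral near $s=t$, pay quadratic growth on the short near-piece against a $1/\delta$ bound on the singular kernel on the far piece, optimize $\delta$ to get an absorption inequality with $\sqrt{cI}$ scaling, and iterate over subintervals of $a$-mass $1/(16c)$. The only cosmetic difference is that you package the iteration through the shifted quantity $\Psi=\varphi+1/(8c)$, whereas the paper iterates $\varphi(t_i)\le 2(\varphi(t_{i-1})+1/(16c))$ directly and sums the geometric series at the end; the content is identical.
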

	
	\begin{proof}
		Our proof strategy relies on iterating  \eqref{nonlinear_Gronwall_prop_eq1} on intervals $[t_0,t)$ such that $\int_{t_0}^t a(s)\dd s $ is sufficiently small.
		
		We claim that if $\lambda = \int_{t_0}^t a(s)\dd s \le 1/16c$, then
		\begin{align}
			\label{iterative_Gronwall_inequality}
			\varphi(t)\le\frac{\varphi(t_0)+\lambda}{1-\sqrt{4c\lambda}}.
		\end{align}
		We first define  $\varphi_*(s) = \sup_{t_0\le r\le s} \varphi(r)$ such that $\varphi_*$ is increasing. We can now split the integral in  \eqref{nonlinear_Gronwall_prop_eq1} to get
		\begin{align*}
			\varphi(s) 
			&\le \varphi(t_0) + \int_{t_0}^{s-\delta/\varphi_*(s)} \left( 1+ \frac{1}{s-r}\right) a(r)\dd r
			+
			\int_{s-\delta/\varphi_*(s)}^s c\varphi(r)^2\dd r \\
			&\le
			\varphi(t_0) + \left(1+\frac{\varphi_*(s)}{\delta}\right)\lambda + c\delta \varphi_*(s),
		\end{align*}
		where we let $\de$ vary. We note that this bound remains true even when $\de/\overline\varphi(s)>s$, since this would imply
		\begin{align*}
			\int_{t_0}^s c\varphi(r)^2\dd r\le c\de\overline\varphi(s).
		\end{align*}
		We note that this bound is minimized by $\delta = \sqrt{\lambda/c}$, thus giving the bound
		\begin{align*}
			\varphi(s)\le \varphi(t_0) + \lambda + 2\sqrt{c\lambda}\varphi_*(s).
		\end{align*}
		Using that $\varphi_*$ is increasing, and choosing $r_*\in [t_0,s]$ such that
		\begin{align*}
			\varphi_*(s) = \sup_{r\in [t_0, s]} \varphi(r) = \varphi(r_*),
		\end{align*}
		we get the inequalities
		\begin{align*}
			\varphi_*(s) = \varphi(r_*)\le\varphi(t_0) + \lambda + 2\sqrt{c\lambda}\varphi_*(r_*)\le
			\varphi(t_0) + \lambda + 2\sqrt{c\lambda}\varphi_*(s).
		\end{align*}
		Rearranging this inequality in order to control $\varphi_*(s)$, we get
		\begin{align*}
			\varphi_*(s)\le \frac{\varphi(t_0) + \lambda}{1-\sqrt{4c\lambda}}.
		\end{align*}
		
		We note that whenever 
		\begin{align*}
			\lambda=\int_{t_0}^t a(s)\dd s\le \frac{1}{16c},
		\end{align*}
		this implies that 
		\begin{align*}
			\varphi(s)\le\varphi_*(s) \le 2 \left(\varphi(t_0) + \frac{1}{16 c} \right).
		\end{align*}
		
		In general, we will have $\int_0^{t} a(s)\dd s  > 1/16c$. However, standard properties of Lebesgue integrals give us that we can partition $[0,t)$ into $0 = t_0\le t_1\le \cdots \le t_{k-1} \le t_k = t$ such that
		\begin{align*}
			\int_{t_{i-1}}^{t_i} a(s) = \frac{1}{16c}
		\end{align*}
		for $1\le i \le k-1$, and $$\int_{t_{k-1}}^{t_k} a(s)\dd s\le \frac{1}{16c}.$$
		This then implies that 
		\begin{align*}
			\varphi(t_i) \le 2 \left(\varphi(t_{i-1}) + \frac{1}{16 c} \right)
		\end{align*}
		for all $1\le i \le k$. But iterating these bounds gives us
		\begin{align*}
			\varphi(t)=\varphi(t_k)
			&\le 2^k\varphi(0)+\frac{1}{16c}\sum_{i=1}^k 2^i \\
			&\le 2^k \left( \varphi(0) + \frac{1}{8c} \right).
		\end{align*}
		
		But since $\frac{k-1}{16c}\le\int_0^t a(s)\dd s$ by construction of the partition of $[0,t)$, we get the bound 
		\begin{align*}
			k\le 1+16c\int_0^t a(s)\dd s,
		\end{align*}	
		and therefore that
		\begin{align*}
			\varphi(t)\le 2^{1+16c\int_0^t a(s)\dd s} \left(\varphi(0) + \frac{1}{8c} \right),
		\end{align*}
		concluding the proof.
	\end{proof}
	
	By applying Lemma \ref{Bony_ODE} to \eqref{second_ODE_eq2}, we get the bound
	\begin{align}
		\nonumber
		\norm{f(t)}_X
		&\le
		e^{C_0(1 + \int_0^t A(s)\dd s)}\left( \norm{f_{\mrm{in}}}_X + C_1\right) 
		\\
		\begin{split}
		&\le
			\label{X_norm_growth_bound}
		\begin{cases}
			C e^{Ct} & \Omega_x^1 = \TT_x^1 \\
			C  & \Omega_x^1 = \RR_x^1
		\end{cases},
		\end{split}
	\end{align}
	for $C = C(\norm{f_{\mrm{in}}}_X,\norm{f_{\mrm{in}}}_{L^1(1+|v|^2)},B)$,
	where we have applied Lemma \ref{integral_of_A_lemma}. But this means that $f\in L^\infty([0,T),X)$ for any finite $T<T_*$, and therefore we have solutions for all time $f\in L^\infty_{\mrm{loc}}([0,\infty),X)$. It now remains to prove dissipation $\norm{f(t)}_X\to 0$ for finite energy data on the line, which we prove using the following lemma.

	\begin{lemma} \label{dissipation_lemma}
		Let $f$ be a solution to \eqref{1D_Boltzmann} in  $L^\infty([0,\infty),X)\cap C([0,\infty),W^{1,1}))\cap C_b([0,\infty),L^1(1+|v|^2))$ on $\RR_x^1\times\RR_v^3$. Then we have dissipation $\norm{f(t)}_X\to0$ as $t\to\infty$.
	\end{lemma}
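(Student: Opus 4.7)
The plan is to exploit two ingredients on the line. First, from Lemma \ref{integral_of_A_lemma} we have $\int_0^\infty A(s)\dd s \le K_0 < \infty$ when $\Omega_x^1 = \RR_x^1$. Second, one has a dispersion estimate for $S_\tau$ on $\RR_x^1 \times \RR_v^3$ applied to $W^{1,1}$ data: for $g \in W^{1,1}$ and $r > 0$, writing $h(u, v_1) = \int |g(u, v_1, v_2, v_3)| \dd v_2\dd v_3$ and substituting $u = x - r v_1$,
\begin{align*}
	\int_{\RR_v^3}\! |g(x-rv_1, v)|\dd v
	= \frac{1}{r} \int_\RR h(u, (x-u)/r)\dd u \cdot (\text{marginals in } v_2, v_3)
	\le \frac{1}{r} \int_\RR \esssup_{v_1} h(u, v_1) \dd u,
\end{align*}
and by the 1D Sobolev embedding $W^{1,1}(\RR_{v_1}) \hookrightarrow L^\infty(\RR_{v_1})$ applied in $v_1$, this is bounded by $\tfrac{1}{r}\|\p_{v_1} g\|_{L^1_{x,v}}$. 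Since $\|S_\tau g\|_X = \sup_{q\ge0} \|S_{q+\tau} g\|_{L^\infty_x L^1_v}$ is monotone decreasing in $\tau$, I would deduce $\|S_\tau g\|_X \le \|g\|_{W^{1,1}}/\tau$ for all $\tau > 0$.

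Next, exactly as in the derivation of \eqref{second_ODE_eq2}, I would use $f\ge 0$ to drop the non-negative loss contribution from the Duhamel formula and obtain, for any $0 \le t_0 \le t$,
\begin{align*}
	\|f(t)\|_X \le \|S_{t-t_0} f(t_0)\|_X + \int_{t_0}^t \min\!\Big(C_1 \|f(s)\|_X^2, \ C_2\Big(1 + \tfrac{1}{t-s}\Big) A(s)\Big) \dd s,
\end{align*}
where $C_1, C_2$ come from Lemma \ref{ContEstLemv2} and from combining \eqref{second_bound} with \textbf{(H2)} respectively. Now set $M = \|f\|_{L^\infty_t X}$ and fix $\epsilon > 0$. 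I would choose the parameters in the following order: (i) pick $\delta \in (0,1)$ with $C_1 M^2 \delta < \epsilon/3$; (ii) pick $t_0 < \infty$ large enough that $C_2(1 + 1/\delta)\int_{t_0}^\infty A(s)\dd s < \epsilon/3$, which is possible since $A \in L^1([0,\infty))$; (iii) since $f(t_0) \in W^{1,1}$ by hypothesis, pick $t \ge t_0 + \delta$ large enough that $\|f(t_0)\|_{W^{1,1}}/(t - t_0) < \epsilon/3$, so that the dispersion estimate gives $\|S_{t-t_0}f(t_0)\|_X < \epsilon/3$.

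Splitting the integral at $t-\delta$, bounding $\min$ by $C_2(1 + 1/\delta) A(s)$ on $[t_0, t-\delta]$ and by $C_1 M^2$ on $[t - \delta, t]$, each of the three contributions to $\|f(t)\|_X$ is smaller than $\epsilon/3$. This proves $\|f(t)\|_X < \epsilon$ for all sufficiently large $t$, hence $\|f(t)\|_X \to 0$ as $t\to \infty$. The main obstacle is the dispersion estimate for $S_\tau$: it is inherently one-dimensional (fails on $\TT^1_x$) and explains why the dissipation statement holds only on the whole line; the rest of the argument is bookkeeping to match the three small parameters $\delta$, $\int_{t_0}^\infty A$, and $\|f(t_0)\|_{W^{1,1}}/(t-t_0)$.
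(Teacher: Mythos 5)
Your proof is correct and follows essentially the same approach as the paper: the dispersive bound $\|S_\tau g\|_X \le \tau^{-1}\|g\|_{W^{1,1}}$ on the line, the splitting of the Duhamel integral at $t-\delta$, and the integrability of $A$ from Lemma~\ref{integral_of_A_lemma}. The only cosmetic difference is that you run a three-parameter $\epsilon/3$ argument, while the paper optimizes over $\delta$ to produce the cleaner bound $C\big(\int_{t_0}^\infty A\big)^{1/2}$ before sending $t_0\to\infty$; both are equally valid.
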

	\begin{proof}
		
		We first must show that for any $g\in W^{1,1}(\RR_x^1\times\RR_v^3)$, we have the dispersive estimate
		\begin{align}\label{X_norm_transport_dispersion}
			\norm{S_t g}_X\le\frac1t \norm{g}_{W^{1,1}_{x,v}}.
		\end{align}
		But we can rewrite $\norm{S_t g}_X = \sup_{q\ge t}\norm{S_q g}_{L^\infty_x L^1_v}$, and for all $q\ge t$ we have
		\begin{align*}
			\norm{S_q g}_{L^\infty_x L^1_v}
			&=
			\operatorname*{ess.sup}\limits_{x\in\RR^1}
				\int_{\RR_v^3}\! g(x-q v_1,v)\dd v
			\\
			&\le
			\operatorname*{ess.sup}\limits_{x\in\RR^1}
				\frac1q \int_\RR\!\int_{\RR^2} g\left( z,\frac{x-z}{q},v_2,v_3\right)\dd v_2 \dd v_3\dd z
			\\
			&\le
			\frac1q 
				\int_\RR\!\int_{\RR^2} \norm{g(z,\cdot,v_2,v_3)}_{L^\infty(\RR_{v_1}^2)}\dd v_2 \dd v_3\dd z
			\\
			&\le \frac1q \norm{g}_{W^{1,1}_{x,v}}
			\le \frac1t\norm{g}_{W^{1,1}_{x,v}},
		\end{align*}
		which proves the estimate \eqref{X_norm_transport_dispersion}, where we have used the change of variables $z = x - q v_1$. We can now apply this estimate to \eqref{second_ODE_eq1} to get
		\begin{align*}
			\norm{f(t)}_X
			&\le\norm{S_{t-t_0}f(t_0)}_X 
			+ \int_{t_0}^t\!\norm{S_{t-s}Q^+(f,f)(s)}_X\dd s
			\\
			&\le
			(t-t_0)^{-1}\norm{f(t_0)}_{W^{1,1}}
			+ 
			\int_{t_0}^t\!
			\min\left\{
			C\norm{f(s)}_X^2,\frac{C}{t-s}A(s)
			\right\}
			\dd s
			\\
			&\le
			(t-t_0)^{-1}C(t_0)
			+
			\int_{t_0}^t\! \min\left\{
			C\norm{f}_{L^\infty_t X}^2,\frac{C}{t-s}A(s)
			\right\}\dd s,
		\end{align*}
		where we set $C(t_0) = \norm{f(t_0)}_{W^{1,1}_{x,v}}$. If we suitably choose $\de>0$, we can bound the last integral as
		\begin{align*}
			\int_{t_0}^t\! \min\left\{
			C\norm{f}_{L^\infty_t X}^2,\frac{C}{t-s}A(s)
			\right\}\dd s
			&\le
			\int_{t_0}^{t-\de}\! \frac{C}{t-s}A(s) \dd s
			+
			\int_{t-\de}^t\! C\norm{f}_{L^\infty_t X}^2 \dd s
			\\
			&\le
			\frac{C}{\de} \int_{t_0}^t\! A(s)\dd s
			+
			C\de\norm{f}_{L^\infty_t X}^2
			\\
			&= 2 C\norm{f}_{L^\infty_t X} \left( \int_{t_0}^t\! A(s)\dd s\right)^{\frac12},
		\end{align*}
		where in the last line we have taken
		\begin{align*}
			\de = \norm{f}_{L^\infty_t X}^{-1}\left(\int_{t_0}^t\! A(s)\dd s\right)^{\frac12}.
		\end{align*}
		We can therefore bound
		\begin{align*}
			\norm{f(t)}_X
			&\le 
			(t-t_0)^{-1}C(t_0)
			+
			2C\norm{f}_{L^\infty_t X}\left( \int_{t_0}^t\! A(s)\dd s\right)^{1/2}
			\\
			&\le
			(t-t_0)^{-1}C(t_0)
			+
			C_1\ 
			\left( \int_{t_0}^\infty\! A(s)\dd s\right)^{\frac12}.
		\end{align*} 
		where $C_1 = C_1(B,\norm{f_{\mrm{in}}}_{L^1(1+|v|^2)},\norm{f_{\mrm{in}}}_X)$ is independent of time. But since 
		\begin{align*}
			\int_{t_0}^\infty\! A(s)\dd s\to 0\qquad\hbox{as}\quad t_0\to\infty
		\end{align*}
		by Lemma \ref{integral_of_A_lemma}, we see that by taking $t_0$ sufficiently large, we get convergence $\norm{f(t)}_X\to 0$ as $t\to\infty$.
	\end{proof}
	
	Since $f_{\mrm{in}}\in W^{1,1}\cap L^1(1+|v|^2)$ implies $f\in C([0,T_*),W^{1,1})\cap C_b([0,T_*),L^1(1+|v|^2))$ by Proposition \ref{regularity_proposition}, and since we have shown that $T_*=\infty$ and $f\in L^\infty([0,\infty),X)$ in \eqref{X_norm_growth_bound}, the conditions of Lemma \ref{dissipation_lemma} apply, which concludes the proof of Theorem \ref{Thm2}.

	\bigskip
	\noindent
	{\bf{Acknowledgments}}: I would like to thank my advisor, Cl\'ement Mouhot, for proposing this problem, for his guidance, and for the many helpful suggestions he gave over the course of working on this problem. I would also like to thank Jo Evans, for our conversations about this problem, and for pointing out a way to strengthen the result. This article was supported by the ERC grant MAFRAN 2017-2022, and by the Cantab Capital Institute for the Mathematics of Information.

	\bibliographystyle{acm}
	\bibliography{Boltzmann}

\end{document}